\documentclass[a4paper]{amsart}
\usepackage{fix-cm}
\usepackage[final]{microtype}
\usepackage[dvipsnames,svgnames,x11names,hyperref]{xcolor}
\usepackage{dsfont,url,graphicx,verbatim,amssymb,enumerate,stmaryrd,booktabs,lmodern,mathtools,mathabx,nicefrac}
\SetSymbolFont{stmry}{bold}{U}{stmry}{m}{n}
\usepackage[pagebackref,colorlinks,citecolor=Mahogany,linkcolor=Mahogany,urlcolor=Mahogany,filecolor=Mahogany]{hyperref}
\usepackage[capitalize]{cleveref}
\usepackage[mathscr]{euscript}
\usepackage[margin=3cm]{geometry}
\usepackage{tikz,tikz-cd}
\usetikzlibrary{matrix,calc,positioning,arrows,decorations.pathreplacing,patterns,arrows,patterns.meta}

\newtheorem{theorem}{Theorem}[section]
\newtheorem*{theorem*}{Theorem}
\newtheorem{lemma}[theorem]{Lemma}
\newtheorem{proposition}[theorem]{Proposition}

\newtheorem*{corollary*}{Corollary}

\newtheorem{claim}[theorem]{Claim}
\newtheorem{atheorem}{Theorem}

\newtheorem{innercustomgeneric}{\customgenericname}
\providecommand{\customgenericname}{}
\newcommand{\newcustomtheorem}[2]{%
  \newenvironment{#1}[1]
  {%
   \ifdefined\crefalias\crefalias{innercustomgeneric}{#2}\fi
   \renewcommand\customgenericname{#2}%
   \renewcommand\theinnercustomgeneric{##1}%
   \innercustomgeneric
  }
  {\endinnercustomgeneric}%
  \ifdefined\crefname\crefname{#2}{#2}{#2s}\fi
}
\newcustomtheorem{customthm}{Theorem}

\newcustomtheorem{customconj}{Conjecture}
\theoremstyle{definition}
\newtheorem{definition}[theorem]{Definition}
\newtheorem*{definition*}{Definition}

\newtheorem{notation}[theorem]{Notation}

\theoremstyle{remark}
\newtheorem{example}[theorem]{Example}
\newtheorem*{example*}{Example}
\newtheorem*{remark*}{Remark}

\newtheorem{remark}[theorem]{Remark}

\usepackage[textwidth=25mm, textsize=tiny]{todonotes}

\crefname{lemma}{Lemma}{Lemmas}
\crefname{theorem}{Theorem}{Theorems}
\crefname{definition}{Definition}{Definitions}
\crefname{proposition}{Proposition}{Propositions}
\crefname{remark}{Remark}{Remarks}
\crefname{corollary}{Corollary}{Corollaries}
\crefname{equation}{Equation}{Equations}
\crefname{construction}{Construction}{Constructions}
\crefname{ex}{Example}{Examples}
\crefname{appsec}{Appendix}{Appendices}
\crefname{subsection}{Subsection}{Subsections}

\newcommand{\bbP}{\mathbb{P}}

\newcommand{\bbZ}{\mathbb{Z}}

\newcommand{\tensor}{\otimes}
\newcommand{\St}{\mathrm{St}}

\newcommand{\SL}{\mathrm{SL}}
\newcommand{\GL}{\mathrm{GL}}
\newcommand{\BGL}{\mathrm{BGL}}
\newcommand{\F}{\mathbb{F}}
\newcommand{\gen}[1]{\langle #1\rangle}

\DeclareMathOperator{\id}{id}

\DeclareMathOperator{\GW}{GW}

\newcommand{\compactldots}{\mathinner{\ldotp\mkern-2mu\ldotp\mkern-2mu\ldotp}}

\newcommand{\scr}[1]{{\mathscr{#1}}}
\renewcommand{\rm}[1]{{\mathrm{#1}}}
\newcommand{\ul}[1]{{\underline{#1}}}
\newcommand{\ol}[1]{{\overline{#1}}}

\newcommand{\fr}[1]{{\mathfrak{#1}}}
\newcommand{\bb}[1]{{\mathds{#1}}}

\renewcommand{\bf}[1]{{\mathbf{#1}}}

\AtBeginDocument{%
	\def\MR#1{}
}

\title{Double Steinberg coinvariants for special linear groups}

\author[T. Abdelnaim]{Tatiana Abdelnaim}
\address{Department of Mathematics, University of Oklahoma, 601 Elm Ave Rm 423, Norman, OK 73019-3103, USA}
\email{tatiana.a.abdelnaim-1@ou.edu}
\author[D. Chan]{David Chan}
\address{Department of Mathematics, Michigan State University, 619 Red Cedar Road, East Lansing, MI 48823, USA}
\email{chandav2@msu.edu}
\author[A. Kupers]{Alexander Kupers}
\address{Department of Computer and Mathematical Sciences, University of Toronto Scarborough, 1265 Military Trail, Toronto, ON M1C 1A4, Canada}
\email{a.kupers@utoronto.ca}
\author[R. J. Sroka]{Robin J.\ Sroka}
\address{Mathematisches Institut, Universität Münster, Einsteinstrasse 62, 48149 Münster, Germany}
\email{robinjsroka@uni-muenster.de}
\author[M. Scalamandre]{Matthew Scalamandre}
\address{Department of Computer and Mathematical Sciences, University of Toronto Scarborough, 1265 Military Trail, Toronto, ON M1C 1A4, Canada}
\email{m.scalamandre@utoronto.ca}

\begin{document}

\begin{abstract}
    For a field $F$ we study the coinvariants for the $\SL_n(F)$-action on the double Steinberg module $\St_n(F) \otimes \St_n(F)$ and show they have a rich algebraic structure: for $n = 2$ it is the Grothendieck--Witt group of $F$, and for all $n$ they assemble to a graded nonunital $\bbZ[F^\times]$-algebra, whose rational (underived) indecomposables may be expressed in terms of the augmentation ideal of the Grothendieck--Witt group. We then explain, building on work of Galatius--Kupers--Randal-Williams, that the special linear groups $\SL_n(F)$ assemble to an $E_\infty$-algebra in a suitable functor category, whose $E_2$-homology groups have a vanishing line of slope 2 and on the critical line are given by the double Steinberg coinvariants.
\end{abstract}

\maketitle

\vspace{-.5cm}

\tableofcontents

\vspace{-.5cm}

\section{Introduction}

Let $F$ be a field. Quillen observed that when one studies the group homology $H_*(\GL_n(F))$ of general linear groups over $F$, or its algebraic $K$-theory groups $K_*(F)$, a crucial role is played by the homology groups $H_*(\GL_n(F);\St_n(F))$ with coefficients in the so-called \emph{Steinberg module}, defined as the reduced top homology group of the Tits building $T(F^n)$ \cite{Quillen}. Recent work of Galatius, Kupers, and Randal-Williams \cite{GKRW23,GKRW25} found that the \emph{double Steinberg module}
\[\St^2_n(F) \coloneq \St_n(F) \otimes \St_n(F)\]
and the homology groups $H_*(\GL_n(F);\St^2_n(F))$ are similarly important, as long as $F$ is an infinite field. They moreover established that the $\GL_n(F)$-coinvariants of $\St^2_n(F)$ are isomorphic to $\bb{Z}$ for all $n$, that is, $H_0(\GL_n(F);\St_n^2(F)) \cong \bb{Z}$. More recently, $H_1(\GL_n(F);\St_n^2(F))$ was described in terms of multiple polylogarithms \cite{KRS1}.

\medskip

In this paper we consider analogous questions for the \emph{special linear groups} $\SL_n(F)$ and initiate the study of its homology with coefficients in the double Steinberg module,
\[
    H_*(\SL_n(F);\St_n^2(F)).
\]
Our first result is a computation of the $\SL_n(F)$-coinvariants of $\St^2_n(F)$ for $n=2$ and our answer is phrased in terms of the (symmetric) \emph{Grothendieck--Witt group} $\rm{GW}(F)$: this is the group completion of the abelian monoid of isomorphism classes of nondegenerate symmetric bilinear forms over $F$ under orthogonal sum (see e.g.\ \cite[Definition 2.1.9]{Scharlau} and \cite[Definition 2.1.2]{deglise}), and has a presentation with generators the rank one forms $\langle a \rangle$  for $a \in F^\times$ given by $(x,y) \mapsto axy$, and relations (i) $\langle\lambda^2 a \rangle = \langle a \rangle$ for $a,\lambda \in F^\times$ and (ii) $\langle a \rangle + \langle b \rangle = \langle a+b\rangle + \langle ab(a+b) \rangle$ for $a,b,a+b \in F^\times$ (see e.g.\ \cite[Theorem 2.9.4]{Scharlau} and \cite[Theorem 2.1.11]{deglise}).

\begin{atheorem}\label{mainthm:dimension 2} There is an isomorphism $\St^2_2(F)_{\SL_2(F)} \cong \GW(F)$.
\end{atheorem}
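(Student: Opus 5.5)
We compute the coinvariants by hand from explicit generators and relations, and compare with the presentation of $\GW(F)$ recalled above. Write $\bbP^1 = \bbP^1(F)$. By definition $\St_2(F) = \ker(\bbZ[\bbP^1] \to \bbZ)$. It is generated by the symbols $[p,q] = p - q$ for $p \neq q$, subject only to $[p,q] + [q,r] = [p,r]$. Equivalently, fixing $\infty \in \bbP^1$, it is free on the $[\infty, x]$ for $x \in F$. Hence $\St^2_2(F)$ is free on the tensors $[\infty,x]\otimes[\infty,y]$ with $x,y \in F$.

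Generators for the coinvariants come from the orbit structure. $\SL_2(F)$ acts transitively on ordered pairs of distinct points of $\bbP^1$, with the diagonal torus as stabilizer of $(\infty,0)$. Using the cocycle relation in the second factor, every class in the coinvariants is an integral combination of the classes
\[
e(x) \coloneq \bigl[\,[\infty,0]\otimes[\infty,x]\,\bigr], \qquad x \in F,
\]
together with the class $[\,[\infty,0]\otimes[\infty,0]\,]$. The torus $\mathrm{diag}(t,t^{-1})$ fixes $[\infty,0]$ and sends $x \mapsto t^2x$. Hence $e(x)$ depends only on the class of $x$ in $F^\times/F^{\times 2}$ for $x \neq 0$, and $e(0)$ is defined; in fact $e(0)$ is exactly the extra class $[\,[\infty,0]\otimes[\infty,0]\,]$. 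Moreover $e(0)$ should equal $e(1)$, up to the sign conventions checked below. The element $w = \begin{psmallmatrix}0&1\\-1&0\end{psmallmatrix}$ swaps $\infty$ and $0$. It yields relations of the form
\[
e(x) = (\text{combination of } e(0) \text{ and } e(-1/x)).
\]
So the candidate map is
\[
\psi\colon \GW(F) \to \St^2_2(F)_{\SL_2(F)}, \qquad \langle a\rangle \mapsto e(a) \quad (\text{possibly with a normalising twist}).
\]
Relation (i), $\langle \lambda^2 a\rangle = \langle a\rangle$, then holds automatically.

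The substantive steps are as follows. First, show that relation (ii), $\langle a\rangle + \langle b\rangle = \langle a+b\rangle + \langle ab(a+b)\rangle$, holds for the classes $e(-)$. I would obtain it by taking a general tensor $[\infty,0]\otimes[p,q]$, expanding it in two ways, and comparing with its image under a suitable unipotent or Weyl element. Concretely, write $[p,q] = [p,\infty] + [\infty,q]$, and also move $[\infty,0]$ to $[\infty,p]$ by the translation $x \mapsto x - p$ followed by a torus element. Second, show that $\psi$ is injective by constructing an inverse $\Phi\colon \St^2_2(F) \to \GW(F)$ that is $\SL_2(F)$-invariant. On the basis $[\infty,x]\otimes[\infty,y]$ of the free module, one defines $\Phi$ by a formula in $x$ and $y$. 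The guiding requirement is that for lines spanned by vectors $v_p,v_q,v_r,v_s$, a symbol $[p,q]\otimes[r,s]$ should go to something like $\langle \det(v_p,v_q)\det(v_r,v_s)\cdot(\text{a cross term})\rangle$ when the lines are in general position. Degenerate configurations get a correction term such as $\langle 1\rangle$ or the hyperbolic form $\langle 1\rangle + \langle -1\rangle$.

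The main obstacle I expect is this second step: finding the correct formula for $\Phi$ and verifying that it is invariant. Since $\SL_2(F)$ is generated by upper unipotents, the torus and $w$, invariance needs to be checked only for these generators, and each check is a finite computation. Unipotents and the torus act by affine maps $x \mapsto t^2x+s$ on the basis and should be easy. For $w$, which acts by $x \mapsto -1/x$, the check should reduce precisely to relations (i) and (ii) of $\GW(F)$.

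If a closed formula is elusive, I would use the fallback route: apply $\St_2(F)\otimes(-)$ to $0 \to \St_2(F) \to \mathrm{Ind}_B^{\SL_2}\bbZ \to \bbZ \to 0$, and use Shapiro's lemma to get an exact sequence
\[
H_1(\SL_2;\St_2) \to (\St_2^{\otimes 2})_{\SL_2} \to (\St_2)_B \to (\St_2)_{\SL_2}.
\]
Here $(\St_2)_B$, where $B$ is the upper triangular Borel subgroup, is computed via $\St_2|_B \cong \bbZ[F]$ with $B$ acting by $x \mapsto t^2x+s$. This would identify the rank part and reduce the remaining content to the image of $H_1$, which must match the augmentation ideal of $\GW(F)$.
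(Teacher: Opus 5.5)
Your route is sound and differs genuinely from the paper's, but one claim is false and the key formula is left unstated; here is how to fix both.

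\textbf{The false claim.} The claim that $e(0)$ ``should equal $e(1)$'' is wrong. Your own $w$-computation gives $w\cdot([\infty,0]\otimes[\infty,x]) = -[\infty,0]\otimes[\infty,-1/x]+[\infty,0]\otimes[\infty,0]$. Hence $e(x)+e(-x)=e(0)$ for every $x\neq 0$. So $e(0)$ is the hyperbolic class, of rank $2$, not $\pm e(1)$.

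\textbf{The missing inverse.} Write $f(x,y)=[\infty,x]\otimes[\infty,y]$ for your basis. Then the inverse is forced:
\begin{enumerate}
\item Invariance under $x\mapsto t^2x+s$ forces $\Phi(f(x,y))=\phi(y-x)$, with $\phi$ constant on square classes.
\item The requirement $\Phi\circ\psi=\mathrm{id}$ forces $\phi(a)=\langle a\rangle$ for $a\neq 0$.
\item The $w$-check on $f(0,y)$ forces $\phi(0)=\langle y\rangle+\langle -y\rangle=\langle 1\rangle+\langle -1\rangle$. It is not $\langle 1\rangle$, which that check would rule out.
\end{enumerate}
With this $\phi$, the remaining $w$-check on $f(x,y)$ with $x,y,x-y\neq 0$ reduces exactly to relation (ii) with $a=x$, $b=y-x$. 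This uses the standard identity $\langle y\rangle+\langle -y\rangle=\langle 1\rangle+\langle -1\rangle$ in $\GW(F)$, which the paper also uses; note it is needed in addition to (i) and (ii) as you stated them.

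\textbf{A shortcut.} Since the Borel $B$ permutes your basis, with orbits indexed by $y-x\in\{0\}\sqcup F^\times/(F^\times)^2$, you get $\scr{A}_2$ directly. It is $\bbZ[\{0\}\sqcup F^\times/(F^\times)^2]$ modulo the relations $w\beta=\beta$ for basis elements $\beta$. These unwind to precisely $e(0)=e(a)+e(-a)$ together with relation (ii). This gives well-definedness and surjectivity of $\psi$ and injectivity in one stroke. The separate ``expand in two ways'' argument and the Shapiro-lemma fallback then become unnecessary.

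\textbf{Comparison with the paper.} The paper's $\Psi\colon\langle x\rangle\mapsto\{x\}_2$ corresponds to your $\psi$. The paper gets surjectivity of $\Psi$ from its general-$n$ theorem that $\{1\}_n$ generates $\scr{A}_n$ as a $\bbZ[F^\times]$-module. It verifies the $\GW$ relations by row and column operations on matrices. It builds the inverse through the pullback $\GW(F)=\bbZ\times_{\bbZ/2}\mathrm{W}(F)$:
\begin{itemize}
\item the rank comes from the Galatius--Kupers--Randal-Williams computation $(\St^2_2)_{\GL_2}\cong\bbZ$;
\item the Witt component comes from Barge's reduced cross-ratio $2$-cocycle;
\item well-definedness uses a presentation of $\St^2_2$ modulo the classes $[\ell_1,\ell_2]\otimes[\ell_1,\ell_2]$, extracted from the Sharbly resolution.
\end{itemize}
Your argument exploits a feature special to $n=2$: $\St_2$ restricted to $B$ is the permutation module $\bbZ[F]$. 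That makes it far more elementary and self-contained, and it yields an explicit presentation of $\scr{A}_2$. The paper's route is heavier, but it interprets the isomorphism geometrically through cross-ratios and $\GL$-coinvariants, and reuses machinery built for all $n$.
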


\begin{example}It follows from \cref{mainthm:dimension 2} that $\St^2_2(F)_{\SL_2(F)}$ is a quotient of $\bb{Z}[F^\times/(F^\times)^2]$. If $F$ contains a square root of $-1$ then $\rm{GW}(F) \cong \bb{Z} \oplus \{\text{$2$-torsion}\}$; for example, one may use the equation $\langle a \rangle+\langle -a \rangle = \langle 1 \rangle+\langle -1 \rangle$ \cite[Theorem 2.1.11]{deglise}, which in this case implies $2(\langle a \rangle-\langle 1\rangle)=0$.
\end{example}

\begin{remark*}It is plausible that through the mechanism of \cref{sec:ek-cells}, \cref{mainthm:dimension 2} is related to the description of $H_2(\rm{SL}_2(F);\bb{Z})$ as a pullback $\smash{\rm{I}^2(F) \times_{\rm{I}^2(F)/\rm{I}^3(F)} K_2(F)}$ involving the powers of the augmentation ideal $\rm{I}(F)$ of $\rm{GW}(F)$ \cite[Theorem 6.5]{Suslin} \cite[Theorem 10]{Mazzoleni}.\end{remark*}

For $n>2$ our results are less complete. We recover certain features of $\rm{GW}(F)$, but we do not go as far as to suggest that $\St^2_n(F)_{\SL_n(F)}$ for $n>2$ be thought of as a generalization of the Grothendieck--Witt group, as we do not give an interpretation of it in terms of forms.

\begin{atheorem}\label{mainthm:higherdim} \,
\begin{enumerate}[(i)]
    \item \label{enum:higherdim-i} $\St^2_n(F)_{\SL_n(F)}$ is a quotient of $\bb{Z}[F^\times/(F^\times)^n]$.
    \item \label{enum:higherdim-ii} If $F$ contains an $n$th root of $-1$ then $\St^2_n(F)_{\SL_n(F)}$ is of the form $\bb{Z} \oplus \{\text{$n$-torsion}\}$.
\end{enumerate}
\end{atheorem}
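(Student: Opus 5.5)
We use the standard description of the Steinberg module via apartment classes. By Solomon--Tits, $\St_n(F)$ is generated as an abelian group by the apartment classes $[v_1,\dots,v_n]$ for bases $(v_1,\dots,v_n)$ of $F^n$. These classes depend only on the lines $\langle v_i\rangle$, and they change sign under transpositions of the vectors. Hence $\St^2_n(F)$ is generated by tensors $[b]\otimes[b']$ for pairs of bases $b,b'$.

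\emph{Step 1: reduction to a single base apartment.} The group $\SL_n(F)$ acts transitively on ordered bases up to rescaling the vectors. So in the coinvariants every generator equals one of the form $[e_1,\dots,e_n]\otimes[b']$. Moreover, the stabiliser of the lines $\langle e_i\rangle$ in $\SL_n(F)$, namely the diagonal matrices of determinant one together with the signed permutation matrices, acts on $b'$. We then use the $\GL_n(F)$ result of Galatius--Kupers--Randal-Williams: $H_0(\GL_n(F);\St^2_n(F))\cong\bbZ$, generated by $[e]\otimes[e]$. This result can presumably be upgraded to a computation of the coinvariants for the subgroup $\SL_n(F)\cdot Z$, with $Z$ the scalar matrices. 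Note that $Z$ acts trivially on both Steinberg factors.

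\emph{Step 2: the group $\bbZ[F^\times/(F^\times)^n]$.} The quotient $\GL_n(F)/(\SL_n(F)\cdot Z)$ is identified with $F^\times/(F^\times)^n$ via the determinant. Consequently $\GL_n(F)/(\SL_n(F)Z)$ acts on the $\SL_n$-coinvariants, since $\SL_n Z$ acts trivially there. We define
\[
\bbZ[F^\times/(F^\times)^n]\longrightarrow \St^2_n(F)_{\SL_n(F)},\qquad a\longmapsto \operatorname{diag}(a,1,\dots,1)\cdot[e]\otimes[e].
\]
This is well defined: for $\lambda\in F^\times$ the scalar matrix $\lambda I$ acts trivially and has determinant $\lambda^n$, so $a$ and $\lambda^n a$ have the same image. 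It remains to prove surjectivity, i.e.\ that the $F^\times$-orbit of $[e]\otimes[e]$ generates everything. For this I would rerun the GKRW argument that generators $[e]\otimes[b']$ reduce to multiples of $[e]\otimes[e]$. That argument uses the apartment relations, namely the Lee--Szczarba relation $\sum_i(-1)^i[v_0,\dots,\hat v_i,\dots,v_n]=0$, together with induction on $n$ through parabolic subgroups. In it, I would track the determinants of all group elements used. Each time a $\GL_n$-element $g$ is applied, we factor $g=s\cdot\operatorname{diag}(\det g,1,\dots,1)$ with $s\in\SL_n$. The resulting expression is then a $\bbZ[F^\times]$-combination of copies of $[e]\otimes[e]$, which proves (i). Carrying out this bookkeeping consistently is the main obstacle. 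The reduction relies on upper-triangular unipotent matrices, which lie in $\SL_n$, so most moves cost nothing; the delicate moves are the rescalings of individual basis vectors. I would first verify the case $n=2$, which is consistent with \cref{mainthm:dimension 2}, and then induct along the block-diagonal embedding $\SL_{k}\times\SL_{n-k}\subset\SL_n$.

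\emph{Step 3: proof of (ii).} Sum the generators of (i) to get an augmentation map to $\bbZ$. Its composite with
\[
\St^2_n(F)_{\SL_n(F)}\to\St^2_n(F)_{\GL_n(F)}\cong\bbZ
\]
sends each $\langle a\rangle$ to the generator. This splits off a copy of $\bbZ$, and the complement is the image of the augmentation ideal, which is generated by the elements $\langle a\rangle-\langle 1\rangle$. Now suppose $\zeta^n=-1$. The matrix $\operatorname{diag}(\zeta,\dots,\zeta)$ has determinant $-1$ and acts on apartments by rescaling, hence trivially; so $\langle -a\rangle=\langle a\rangle$. Next, the permutation matrix of an $n$-cycle composed with suitable scalings gives an element $g$ with $\det g\in\{\pm1\}$ that cyclically moves the line $\langle e_i\rangle$ to $\langle e_{i+1}\rangle$. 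This lets us move the entry $a$ through the diagonal positions. It follows that $\operatorname{diag}(a,\dots,a)$, which is trivial, equals $n\langle a\rangle - (n-1)\langle 1\rangle$ up to the torsion-free identification. Making this precise requires an additivity $\langle ab\rangle+\langle 1\rangle=\langle a\rangle+\langle b\rangle$ modulo the image of $\operatorname{diag}$ on other coordinates; I would derive it from the Lee--Szczarba relation, mimicking the derivation of $\langle a\rangle+\langle -a\rangle=\langle 1\rangle+\langle -1\rangle$ for $n=2$. Granting it, we get $n(\langle a\rangle-\langle1\rangle)=\langle a^n\rangle-\langle 1\rangle=0$, so the complement of $\bbZ$ is $n$-torsion.
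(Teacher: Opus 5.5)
Your proof of (i) breaks at the choice of generator. The class $[e]\otimes[e]=[\rm{id}_n]\otimes[\rm{id}_n]$ is fixed by the residual $F^\times$-action. Indeed, $\rm{diag}(a,1,\compactldots,1)\cdot([e]\otimes[e])=[\rm{diag}(a,1,\compactldots,1)]\otimes[\rm{diag}(a,1,\compactldots,1)]=[e]\otimes[e]$ already holds in $\St^2_n$, by column scaling. So your map $\bbZ[F^\times/(F^\times)^n]\to\scr{A}_n$ sends every group element to the same class. Its image is the cyclic group $\bbZ\cdot[e]\otimes[e]$, and no amount of determinant bookkeeping can make it surjective. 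It is also false that $[e]\otimes[e]$ generates $(\St^2_n)_{\GL_n}\cong\bbZ$. It is the block-sum product $\{1\}_1^n$, which maps to $n!$ times a generator there. For $n=2$, the proof of \cref{mainthm:dimension 2} gives $[e]\otimes[e]=\{a\}_2+\{-a\}_2$, the hyperbolic form in $\GW(F)$. The right generator is the Coxeter class $\{1\}_n=[e_1,\compactldots,e_n]\otimes[e_1,e_1+e_2,\compactldots,e_{n-1}+e_n]$, on which $F^\times$ acts nontrivially, via $a\cdot\{1\}_n=\{a^{-1}\}_n$. Even with that generator, surjectivity is only gestured at, and it is the real content of (i). Because rows can only be rescaled in pairs in $\SL_n$, the paper does three things. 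It reduces to superdiagonal classes (\cref{prop: Jordan reduction}). It collapses nonzero superdiagonal entries into $\{\prod a_i^{n-i}\}_n$ (\cref{prop: superdiagonals}). It then handles zero entries, which produce block sums $\{1\}_a\cdot\{1\}_b$. Expressing these via rank-$(a+b)$ generators needs the sign-sensitive formula $\{1\}_a\cdot\{1\}_b=P(a+b,a)\{1\}_{a+b}+Q(a+b,a)\{-1\}_{a+b}$ of \cref{thm: products}. The $\GL_n$ identity with $\binom{a+b}{a}$ only holds after passing to $F^\times$-coinvariants.

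For (ii), your use of $\zeta\cdot\rm{id}_n$ to get $\{-a\}_n=\{a\}_n$ is fine; it is \cref{lem:f-times-action} \eqref{enum:f-times-action-ii} with $\lambda=\zeta$. However, the additivity $\langle ab\rangle+\langle 1\rangle=\langle a\rangle+\langle b\rangle$ that you grant is false in general. Take the correct generator and $n=2$. Under $\scr{A}_2\cong\GW(F)$ the additivity says $(\langle1\rangle-\langle a\rangle)(\langle1\rangle-\langle b\rangle)=0$ for all $a,b$, i.e.\ $\rm{I}(F)^2=0$. This fails for $F=\bb{C}(x,y)$: it contains $\sqrt{-1}$, but the Pfister form $\langle 1,-x,-y,xy\rangle$ is anisotropic over it. The missing idea is to use the multiplicative structure instead of additivity. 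Once $\{1\}_n=\{-1\}_n$, \cref{exam:products-odd} gives $\{1\}_{n-1}\cdot\{1\}_1=n\{1\}_n$. The residual action on a block sum can be moved onto the rank-one factor, where $F^\times$ acts trivially. Hence $n\{a\}_n=a^{-1}\cdot(\{1\}_{n-1}\cdot\{1\}_1)=\{1\}_{n-1}\cdot(a^{-1}\cdot\{1\}_1)=\{1\}_{n-1}\cdot\{1\}_1=n\{1\}_n$. To finish, split $\scr{A}_n\to(\St^2_n)_{\GL_n}\cong\bbZ$ by $1\mapsto\{1\}_n$, not by $[e]\otimes[e]$, which hits $n!$. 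Combined with the fact from (i) that the classes $\{a\}_n$ span $\scr{A}_n$, this shows the complement is $n$-torsion.
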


We obtain further information after rationalization and passing to indecomposables with respect to the following product: direct sum induces a product on $\St(F) \coloneq \bigoplus_n \St_n(F)$, and $\St^2(F) \coloneq \bigoplus_n \smash{\St^2_n(F)}$ inherits a product by using this termwise, see e.g.~\cite[Section 6.3]{GKRW25}. This in turn induces a product on 
\[\St^2(F)_{\SL(F)} \coloneq \bigoplus_n \smash{\St^2_n(F)_{\SL_n(F)}}.\] 
Rationally, we compute its (underived) indecomposables $Q\big[\St^2(F)_{\SL(F)}\big]$ in terms of the augmentation ideal $\rm{I}(F)$ of the Grothendieck--Witt group, i.e.\ the group $\rm{I}(F)$ is the kernel of the map $\rm{GW}(F) \to \bb{Z}$ induced by sending a form to its rank.

\begin{atheorem}\label{mainthm:rational} The (underived) indecomposables of the algebra $\St^2(F)_{\SL(F)}\otimes\bb{Q}$ are given by
    \[Q\big[\St^2(F)_{\SL(F)}\big]_n \otimes \bb{Q} \cong \begin{cases}
        \bb{Q} & \text{if $n=0,1$,} \\ 
        \rm{I}(F)\otimes \bb{Q} & \text{if $n=2$,} \\ 
        0 & \text{otherwise.}
    \end{cases}\]
\end{atheorem}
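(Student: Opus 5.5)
We compute the rational indecomposables degree by degree, using \cref{mainthm:dimension 2} and \cref{mainthm:higherdim}. Write $A_n = \St^2_n(F)_{\SL_n(F)}$ and $A = \bigoplus_n A_n$. In degree $0$ we have $A_0 = \bb{Z}$, which is already indecomposable in the nonunital sense. In degree $1$ we have $\St_1 = \bb{Z}$ and $\SL_1$ trivial, so $A_1 = \bb{Z}$; there are no decomposables in degree $1$ (the product lands in degrees $\geq 2$ after discarding the unit in degree $0$), so $Q_1 = \bb{Z}$.

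\textbf{Degree 2.} By \cref{mainthm:dimension 2}, $A_2 \cong \GW(F)$. The decomposables are the image of $A_1 \otimes A_1 \to A_2$. I would make the isomorphism of \cref{mainthm:dimension 2} explicit on apartment classes: a pair of bases gives a generator of $\St_2^2$, and the $\GL_2$-action twists by determinants. This should identify the product of the two degree-$1$ generators with $\langle 1 \rangle$, or perhaps with the hyperbolic form. Either way its image in $\GW(F)\otimes\bb{Q}$ spans a complement of $\rm{I}(F)\otimes\bb{Q}$, because its rank is nonzero. Hence $Q_2 \otimes \bb{Q} \cong (\GW(F)/\bb{Z}\cdot x)\otimes \bb{Q} \cong \rm{I}(F)\otimes\bb{Q}$.

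\textbf{Degree $n \geq 3$.} Here we must show that $A_n \otimes \bb{Q}$ is spanned by products. By \cref{mainthm:higherdim}\eqref{enum:higherdim-i}, $A_n$ is spanned by classes $[a]$ for $a \in F^\times$. These arise from the pair $(\text{standard apartment},\ \mathrm{diag}(a,1,\ldots,1)\cdot\text{standard apartment})$, and the $\SL_n$-coinvariants see $a$ only through its determinant class. Such a pair is visibly a product $[a]_1 \cdot [1]_{n-1}$ under the block-sum product, or more precisely $[a]_2\cdot[1]_{n-2}$ with $[a]_2 \in A_2$.

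So products surject onto $A_n$ integrally for $n\ge 3$, which gives $Q_n=0$ even before rationalizing. Then one needs $\mathrm{diag}$-compatibility of the product with the parametrization by $F^\times$. To get this, I would check that the product of the generators indexed by $a\in A_2$ and $b\in A_{n-2}$ is the generator indexed by $ab$ in $A_n$. That follows from the block-sum formula for determinants, given that the parametrization in \cref{mainthm:higherdim} is by the determinant of the change-of-apartment matrix.

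\textbf{Main obstacle.} The hard step is pinning down exactly which elements span $A_n$, and confirming that they are products rather than merely related to them. The risk is that \cref{mainthm:higherdim}(i) presents $A_n$ as a quotient of $\bb{Z}[F^\times/(F^\times)^n]$ via a surjection whose generators are not obviously products. For instance, the generators might be indexed by apartment pairs in general position rather than by block-diagonal ones.

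If this happens, I would first use the Lee--Szczarba / Ash--Rudolph style reduction: every apartment class in $\St_n$ is a sum of integral apartments relative to a fixed one. This reduces to pairs related by a monomial or triangular matrix, and these decompose into blocks modulo relations. Rationalization would then be used only in degree $2$, to split off the rank part, and possibly in degree $n\ge3$ to kill $n$-torsion discrepancies, using \cref{mainthm:higherdim}(ii) after a field extension adjoining roots of $-1$ together with a transfer argument.
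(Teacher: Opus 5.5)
\textbf{There is a genuine gap in degrees $n \geq 3$; your degrees $0$, $1$ and $2$ are fine.}

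On degree $2$: the product of the two degree-one generators is $\langle\mathrm{id}_2\rangle=\{1\}_2+\{-1\}_2$ (by column addition, as in the proof of Theorem~\ref{mainthm:dimension 2}). This is the hyperbolic form $h$, of rank $2$, so your hedge resolves and its span is a complement of $\mathrm{I}(F)\otimes\bb{Q}$.

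The problem for $n\geq 3$ is that you have misidentified the generators. By column scaling, $[\mathrm{diag}(a,1,\ldots,1)]=[\mathrm{id}_n]$ in $\St_n$. So your pair is just $[\mathrm{id}_n]\otimes[\mathrm{id}_n]=\{1\}_1^n$, and it does not depend on $a$ at all. The classes that actually generate $\scr{A}_n$ over $\bb{Z}[F^\times]$ (Theorem~\ref{theorem: An generated by 1s}) are $\{\lambda\}_n=\langle\mathrm{sdiag}(1,\ldots,1,\lambda)\rangle$. These are Coxeter pairs in general position with every superdiagonal entry nonzero, and they are not block sums. Their invariant is the generalized cross-ratio $\prod a_i^{n-i}$ modulo $n$th powers, not a determinant.

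Consequently your claim that products surject onto $\scr{A}_n$ integrally, so that $Q_n=0$ before rationalizing, is false. The quotient $\scr{A}_*\to\bigoplus_n(\St^2_n)_{\GL_n}$ is a surjective algebra map onto the divided power algebra $e_ae_b=\binom{a+b}{a}e_{a+b}$, so $Q_3$ surjects onto $\bb{Z}/3$. Directly: since $\{1\}_3=\{-1\}_3$, one has $\{1\}_1\{y\}_2=3\{y\}_3$, so the decomposables in degree $3$ are exactly $3\scr{A}_3$. For the same reason, the product of the generators indexed by $a\in\scr{A}_2$ and $b\in\scr{A}_{n-2}$ is not the generator indexed by $ab$. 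It is $P(n,2)\{ab\}_n+Q(n,2)\{-ab\}_n$.

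\textbf{The missing ingredient is the product formula of Theorem~\ref{thm: products}:}
$\{1\}_a\{1\}_b=P(a+b,a)\{1\}_{a+b}+Q(a+b,a)\{-1\}_{a+b}$. The paper proves it by rewriting the block sum as a sum of the generic classes $M(n,a,i)$ and $N(n,a,i)$ and running a recursion. This is where the real work lies, and rationalization is then used essentially in all degrees, not just in degree $2$.
\begin{itemize}
\item For $n=2m+1$ one gets $\{1\}_2\{1\}_{2m-1}=m(2m+1)\{1\}_{2m+1}$.
\item For $n=2m$ one pairs $\{1\}_2\{1\}_{2m-2}$ with $\{-1\}_2\{1\}_{2m-2}$. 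This gives a $2\times 2$ system with determinant $P(2m,2)^2-Q(2m,2)^2=m^2(2m-1)\neq 0$, which can be solved for $\{1\}_{2m}$ over $\bb{Q}$.
\end{itemize}

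Your fallback stops short of this. A Jordan-type reduction (as in Proposition~\ref{prop: Jordan reduction}) only gets you to superdiagonal matrices. The ones with all superdiagonal entries nonzero are exactly the ones that do not split into blocks, so they are where the computation begins, not ends.

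The field-extension-plus-transfer idea does not help either. No transfer on $\SL_n$-coinvariants is available. Moreover, Theorem~\ref{mainthm:higherdim}(ii) controls the size of $\scr{A}_n$, whereas what you need is decomposability. Smallness is not the mechanism: already $\scr{A}_2\otimes\bb{Q}\cong\GW(F)\otimes\bb{Q}$ can be large.
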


We remark that $\rm{I}(F)\otimes \bb{Q}$ is isomorphic to the rationalized \emph{Witt group} $\rm{W}(F) \otimes \bb{Q}$, where $\rm{W}(F) = \rm{GW}(F)/(h)$ is the quotient of $\rm{GW}(F)$ by the isomorphism class of the hyperbolic form $h = \langle 1 \rangle + \langle -1 \rangle$ (see e.g.\ \cite[Definition 2.1.9]{Scharlau} and \cite[Definition 2.1.6]{deglise}).

\begin{example}If $\sqrt{-1}\in F$ then the (underived) indecomposables of $\St^2(F)_{\SL(F)} \otimes \bb{Q}$ are given by $\bb{Q}$ when $n=0,1$ and $0$ otherwise, because $\rm{W}(F) \otimes \bb{Q}$ is 1-dimensional.
\end{example}

In \cref{sec:ek-cells} we will provide an interpretation of these computations. Recall that the double Steinberg module appears in the work of Galatius, Kupers, and Randal-Williams through a study of structure of the $E_\infty$-algebra $\mathbf{BGL}(F)$, given by the disjoint union $\bigsqcup_n \BGL_n(F)$ with product induced by block sum, \emph{considered as an $E_2$-algebra} \cite{GKRW23,GKRW25}. Namely, for an infinite field $F$ they establish an isomorphism $\smash{H^{E_2}_{n,d}(\mathbf{BGL}(F))} \cong \smash{H_{d-(2n-2)}(\GL_n(F);\St^2_n(F))}$ between its $E_2$-homology groups, in the sense of \cite{GKRW23}, and the homology of general linear groups with coefficients in the double Steinberg module. Thus their coinvariant computation determines the first nonzero $E_2$-homology in each rank $n$. We give a similar interpretation of $H_*(\SL_n(F);\St^2_n(F))$, so in particular of the coinvariants, in terms of an $E_\infty$-algebra $\mathbf{BSL}(F)$ constructed out of the classifying spaces of the special linear groups in a symmetric monoidal category of graded lines:

\begin{atheorem}
    \label{mainthm:ekcells}
    If $F$ is an infinite field, then the $E_2$-homology of $\bf{BSL}(F)$ is given by
    \[H^{E_2}_{n,d}(\bf{BSL}(F)) \cong \widetilde{H}_{d-(2n-2)}(\SL_n(F);\St^2_n(F)).\]
    In particular, the first nonzero groups are $H^{E_2}_{n,2n-2}(\bf{BSL}(F)) \cong \St^2_n(F)_{\SL_n(F)}$.
\end{atheorem}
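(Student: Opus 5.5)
The plan is to run the argument of Galatius--Kupers--Randal-Williams for $\mathbf{BGL}(F)$ in the category of graded lines. Here $\mathbf{BSL}(F)$ is the object whose value in rank $n$ is $\BGL_n(F)$ fibred over the line of determinants, so that "taking the determinant-trivial part" gives $B\SL_n(F)$. First I would set up $E_k$-homology in this functor category. Recall that $E_k$-homology can be computed from the derived $E_k$-indecomposables, which are iterated bar constructions. After applying the functor to a fixed determinant, the $E_1$-splitting complex in rank $n$ is identified with the Tits building $T(F^n)$, equivariantly for the stabiliser of the determinant data, which is $\SL_n(F)$ up to the torus factor recorded by the grading.

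Concretely, I would proceed as in \cite{GKRW25} through the following steps.

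\begin{enumerate}[(1)]
\item Compute the $E_1$-homology. The derived $E_1$-indecomposables in rank $n$ are the homotopy orbits of the suspended splitting complex. Since $T(F^n)$ is Cohen--Macaulay of dimension $n-2$ (Solomon--Tits), its reduced homology is $\St_n(F)$ concentrated in degree $n-2$. This gives
\[
H^{E_1}_{n,d} \cong H_{d-(n-1)}\bigl(\SL_n(F);\St_n(F)\bigr),
\]
where the determinant grading absorbs the $\GL/\SL$ difference. The point to check is that in graded lines the relevant stabilisers are exactly the parabolic subgroups intersected with $\SL_n$, twisted by the determinant.
\item Pass from $E_1$ to $E_2$. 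I would use the "$E_2$-splitting complex" of \cite{GKRW25}: a bisimplicial complex built from pairs of transverse flags, whose reduced homology is $\St_n(F)\otimes\St_n(F)$ in degree $2n-2$. This is the step that requires $F$ to be infinite, because it uses a general position argument for pairs of flags. This yields
\[
H^{E_2}_{n,d} \cong \widetilde{H}_{d-(2n-2)}\bigl(\SL_n(F);\St^2_n(F)\bigr)
\]
after taking homotopy orbits by $\SL_n(F)$.
\end{enumerate}

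The tilde in the statement should come from the unit and the rank-$0$ and rank-$1$ corrections, where $\SL_1$ is trivial and the low ranks have to be handled by hand. The "in particular" then follows because $\St^2_n(F)$ is a module placed in degree $0$, so its homology starts with the coinvariants in degree $d = 2n-2$.

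The main obstacle I expect is the comparison in step (2): identifying the $E_2$-indecomposables of $\mathbf{BSL}(F)$ with $\SL_n$-homotopy orbits of the double Tits complex. There are two difficulties. First, $\SL_n(F)$ does not act with the same orbit structure on transverse flag pairs as $\GL_n(F)$: orbits split according to determinant classes, and this splitting is exactly why the $\GW$-type invariants appear. Second, the graded-lines formalism must account for this splitting correctly. My first attempt would be to reduce to the $\GL$ case via the fibration $B\SL_n \to B\GL_n \to BF^\times$. I would show that the functor-category $E_2$-homology of $\mathbf{BSL}(F)$ is the $\GL$-construction with coefficients induced up along $\SL_n \subset \GL_n$. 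Shapiro's lemma then converts $H_*\bigl(\GL_n;\St^2_n\otimes \bbZ[\GL_n/\SL_n]\bigr)$ into $H_*\bigl(\SL_n;\St^2_n\bigr)$.
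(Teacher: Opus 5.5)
Your Shapiro-lemma reduction is sound, and it is in effect the paper's formal first step. We have $\mathbf{BSL}(F)_{\bbZ}=\mathrm{grdet}_!(\underline{\bbZ}_{>0})$, and $\mathrm{grdet}_!$ and $C_*(-;\bbZ)$ are symmetric monoidal left adjoints, so they commute with $\mathrm{cot}_{E_k^{\mathrm{nu}}}$. Hence $H^{E_2}_{n,*}(\mathbf{BSL}(F))$ is the (shifted) reduced homology of $\widetilde{D}^2(n)/\!\!/\SL_n(F)$, i.e.\ of the $\GL_n(F)$-homotopy orbits with coefficients in $\bbZ[\GL_n/\SL_n]$.

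The complex that appears here, however, is the \emph{split} building $\widetilde{D}^2(n)$, built from direct sum decompositions. It is not a complex of transverse flag pairs, and it is not equivariantly equivalent to a complex with homology $\St_n(F)\otimes\St_n(F)$. The same problem affects your step (1): the $E_1$-splitting complex is not the Tits building. The double Steinberg module comes from the non-split building $D^2(n)\simeq\Sigma^2(T(F^n)\wedge\Sigma^2T(F^n))$, and you may replace $\widetilde{D}^2(n)$ by $D^2(n)$ only after taking homotopy orbits. That replacement is where infiniteness of $F$ enters, not a general position argument for flags. For $\GL_n$ it rests on the Nesterenko--Suslin theorem that block-diagonal subgroups have the same homology as the corresponding block upper triangular ones.

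\textbf{The gap.} After the Shapiro step you need the split-to-non-split comparison on $\SL_n(F)$-orbits, equivalently with the twisted coefficients $\bbZ[\GL_n/\SL_n]$. This amounts to $\SL(F^n|{\simeq})\to\SL(F^n|{\leq})$ being a homology isomorphism, and it does not follow from the $\GL_n$ statement of \cite{GKRW25}. The standard Nesterenko--Suslin argument kills the higher homology of the unipotent radical using scalar matrices in one diagonal block, which act on the radical by scalars. These have nontrivial determinant, so they do not lie in $\SL_n(F)$ and they act nontrivially on $\bbZ[\GL_n/\SL_n]$.

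The paper fills exactly this hole with \cref{lem:ns-replacement}:
\begin{enumerate}[(1)]
\item Schlichting's localization for rings with many units shows that $\SL_q(F)\to\mathrm{SAff}_{p,q}(F)$ is a homology isomorphism. The exponent $t=nq$ is chosen so that the relevant powers of units are realized by central scalar matrices of $\GL_n(F)$, which therefore act trivially.
\item An induction over partial orders on the blocks then gives the $\SL$-version of \cite[Proposition 5.13]{GKRW25}.
\item With that in hand, the proof of \cite[Theorem 5.18]{GKRW25} goes through.
\end{enumerate}
Without an input of this kind, your claim that the ``$\GL$-construction with induced coefficients'' computes $H_*(\GL_n(F);\St^2_n(F)\otimes\bbZ[\GL_n/\SL_n])$ is unjustified. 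Conversely, the splitting of flag-pair orbits by determinant class, which you flag as the main obstacle, is harmless: Shapiro's lemma absorbs it. The real issue is the homology of the $\SL$-stabilizers.
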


In principle this allows one to obtain results for the homology of special linear groups analogous to those for general linear groups in \cite{GKRW25}, e.g.~recover the results of \cite{Schlichting}. However, this seems to be involved and we leave it to future work.

\subsection*{Acknowledgments} The authors would like to thank the organizers of the \emph{Collaborative Research Workshop on $K$-theory and Scissors Congruence} at Vanderbilt University in July 2024, during which this paper was initiated, and the conference \emph{Scissors congruence and $K$-theory} at the University of Pennsylvania in July 2025, during which some of the results were presented. We would also like to thank Thor Wittich for helpful conversations. TA was partially supported by NSF grant DMS-2405310 as well as a Simons Foundation Travel Support for Mathematicians grant. DC was partially supported by NSF grant DMS-2135960. AK acknowledges the support of the Natural Sciences and Engineering Research Council of Canada (NSERC) [funding reference number 512156 and 512250]. RJS was supported by the German Research Foundation through SFB 1442 -- 427320536, Geometry: Deformations and Rigidity, and EXC 2044 -- 390685587, Mathematics M\"unster: Dynamics--Geometry--Structure.

\section{Double Steinberg coinvariants} In this section we begin our study of the $\SL_n(F)$-coinvariants of the double Steinberg module. In particular, we will construct increasingly small generating sets, eventually only requiring a single generator for each rank when considering all coinvariant groups as a graded $\bb{Z}[F^\times]$-module.

\begin{notation} We fix a field $F$ and write $\St_n$ instead of $\St_n(F)$, and $\SL_n$ instead of $\SL_n(F)$.\end{notation}

Recall from the introduction that the double Steinberg module is defined as
\[\St^2_n \coloneq \St_n\otimes\St_n.\]

Our goal is to understand the $\SL_n$-coinvariants of $\St^2_n$, for which we introduce an abbreviated notation:

\begin{notation}We write $\scr{A}_n \coloneq (\St^2_n)_{\SL_n} = (\St_n\otimes\St_n)_{\SL_n}$.
\end{notation}

\subsection{Generating $\scr{A}_n$ as an abelian group} We begin by constructing increasingly smaller generating sets of $\scr{A}_n$ as an abelian group. For this purpose, we consider the Steinberg module as defined through the presentation in \cite[Section 2.1]{GKRW25} (that is, the Bykovskii presentation \cite{Bykovski}):

\begin{definition}\label{def:steinberg} The \emph{Steinberg module} $\St_n$ is given by the quotient of the free abelian group 
\[\St_n \coloneq \frac{\bb{Z}\big\{[A] \mid \text{$[A] \in \rm{GL}_n(F)$}\big\}}{\text{\eqref{enum:steinberg-1}--\eqref{enum:steinberg-3}}}\]
by the following relations, writing $[A] = [v_1,\compactldots,v_n]$ when the $i$th column of $A$ is given by $v_i$:
\begin{enumerate}[(a)]
    \item \label{enum:steinberg-1} (Column addition) For any $\lambda \in F^\times$ and $1 \leq i<j \leq n$ we have
    \[[v_1,\compactldots,v_n] = [v_1,\compactldots, v_i + \lambda v_j,\compactldots,v_j,\compactldots,v_n]+[v_1,\compactldots,v_i,\compactldots, v_i + \lambda v_j,\compactldots,v_n].\]
    \item \label{enum:steinberg-2} (Column scaling) For any $\lambda \in F^\times$ and $1 \leq i \leq n$ we have
    \[[v_1,\compactldots,v_n] = [v_1,\compactldots,\lambda v_i,\compactldots,v_n].\]
    \item \label{enum:steinberg-3} (Column permutation) For any $\sigma \in \fr{S}_n$ we have
    \[[v_1,\compactldots,v_n] = (-1)^\sigma [v_{\sigma(1)},\compactldots,v_{\sigma(n)}].\]
\end{enumerate}
This has a natural $\GL_n(F)$-action by left matrix multiplication $M \cdot [A] = [MA]$.
\end{definition}

Since the double Steinberg module is defined as
$\St^2_n = \St_n\otimes\St_n$,
we can think of its elements as finite sums of basic tensors of the form $[A]\otimes [B]$, where $A,B\in \GL_n(F)$. This has an induced $\GL_n(F)$-action given by $M\cdot([A]\otimes[B]) = [MA]\otimes [MB]$ and $\scr{A}_n$ is given by coinvariants with respect to the subgroup $\SL_n \subseteq \GL_n$. Combined with the presentation above, we obtain the following first generating set for $\scr{A}_{n}$:

\begin{lemma}\label{lem:genset} $\scr{A}_{n}$ is generated by elements of the form $[\rm{id}_n]\otimes [M]$ where $M\in \GL_n(F)$.
\end{lemma}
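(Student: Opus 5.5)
Since $\St_n$ is generated as an abelian group by the symbols $[A]$ with $A \in \GL_n(F)$, the tensor product $\St^2_n = \St_n \otimes \St_n$ is generated by the basic tensors $[A]\otimes[B]$ with $A,B \in \GL_n(F)$. The quotient map $\St^2_n \to \scr{A}_n$ is surjective, so $\scr{A}_n$ is generated by the images of these basic tensors. It therefore suffices to show that each such image equals, in $\scr{A}_n$, a sign times (or just equals) the image of a tensor of the form $[\id_n]\otimes[M]$.

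The plan is to use the $\SL_n$-action to move $A$ to the identity. In the coinvariants we have $[A]\otimes[B] = [gA]\otimes[gB]$ for every $g \in \SL_n$. If $\det A = 1$, take $g = A^{-1}$; this gives $[A]\otimes[B] = [\id_n]\otimes[A^{-1}B]$, which has the required form.

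When $\det A = \delta \neq 1$, we first adjust $A$ inside $\St_n$ by column scaling, relation \eqref{enum:steinberg-2}. Scaling the first column of $A$ by $\delta^{-1}$ produces a matrix $A'$ with $\det A' = 1$ and $[A] = [A']$ in $\St_n$. Hence $[A]\otimes[B] = [A']\otimes[B]$, and applying $g = A'^{-1} \in \SL_n$ yields $[\id_n]\otimes[A'^{-1}B]$.

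There is no real obstacle here. The only point needing care is the determinant issue: the coinvariants are taken for $\SL_n$ rather than $\GL_n$, so we cannot act by $A^{-1}$ directly. Column scaling resolves this, and no signs appear because scaling introduces none.
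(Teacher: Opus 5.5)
Your proof is correct and is essentially the paper's argument: you scale the first column of $A$ by $\det(A)^{-1}$ (column scaling) so that it lies in $\SL_n$, and then act by the inverse of the resulting matrix in the $\SL_n$-coinvariants. The case split on whether $\det A = 1$ is harmless but unnecessary.
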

\begin{proof}
    It suffices to show that every element $[A]\otimes [B]$ can be identified with one of the form $[\rm{id}_n]\otimes [M]$. By \cref{def:steinberg} \eqref{enum:steinberg-2} we have $[A] = [A']$ whenever $[A']$ is obtained from $[A]$ via column scaling. Thus, if $A'$ is obtained from $A$ by scaling the first column by $\det(A)^{-1}$ we obtain $[A]\otimes[B] = [A']\otimes [B]$ where $[A']\in \SL_n(\F)$. Acting by $(A')^{-1}$ proves the claim. 
\end{proof}

With an eye towards reducing the size of this generating set, we introduce abbreviations for its elements and fix notation for generators that will be key in the reduction process:

\begin{definition}\,
\begin{enumerate}[\noindent (i)]
    \item For $M\in \GL_n$ we write $\gen{M}$ for the element $[\rm{id}_n]\otimes[M]\in \scr{A}_n$.
    \item For a basis $v_1,\ldots v_n$ for $F^n$ we write $\langle v_1,\compactldots,v_n \rangle$ for the class of $\gen{M}$, where $M$ is the matrix with $i$th column $v_i$.
    \item For $\lambda\in F^{\times}$ and $n \geq 2$ we write $\{\lambda\}_n$ for the class of the matrix with $1$'s on the diagonal and superdiagonal, except that the $(n-1,n)$ entry is $\lambda$, and with every other entry equal to zero. For $\lambda\in F^{\times}$ and $n = 1$, we simply write $\{\lambda\}_1$ for the class of $\rm{id}_1$.
\end{enumerate}
\end{definition}

\begin{notation}
    \label{nota:diag-sdiag}
    We will write $\rm{diag}(a_1,\compactldots,a_n)$ for the diagonal $(n \times n)$-matrix with diagonal entries $a_1,\ldots,a_n \in F$. We will write $\rm{sdiag}(\lambda_1,\compactldots,\lambda_{n-1})$ for the superdiagonal $(n \times n)$-matrix with $1$'s on the diagonal and entries $\lambda_1,\ldots,\lambda_{n-1}$ just above the diagonal.
\end{notation}

\begin{example} For instance, $\{\lambda\}_4$ is the class of the following matrix:
    \[
       \rm{sdiag}(1,1,\lambda) = \begin{bmatrix}
            1 & 1 & 0 & 0\\
            0 & 1 & 1 & 0\\
            0 & 0 & 1 & \lambda\\
            0 & 0 & 0 & 1\\
        \end{bmatrix}.
    \]
\end{example}

It follows from \cref{def:steinberg} that the elements $\langle M \rangle$ for $M\in \GL_n$ generating $\scr{A}_n$ satisfy several relations related to row and column operations. We will rephrase these more concisely using the following notation: (i) $e_{ij}(\lambda)$ denotes the matrix which is equal to the identity except for the $(i,j)$ position, where it is equal to $\lambda$, (ii) a monomial matrix is a square matrix with exactly one nonzero entry in each row and in each column and it has an associated permutation matrix  given by replacing these entries with $1$.

\begin{lemma}\label{lem:an-relations}
    The classes $\langle M\rangle = \langle v_1,\compactldots,v_n\rangle$ of $\scr{A}_n$ satisfy the following relations.
\begin{enumerate}[\noindent (1)]
    \item \label{enum:an-relations-i} (Column addition) For any $\lambda \in F^{\times}$ and $i \neq j$ we have 
    \[\langle M \rangle = \langle M e_{ij}(\lambda) \rangle+\langle Me_{ji}(\lambda^{-1})) \rangle.\]
    \item \label{enum:an-relations-ii} (Column scaling and permutation) Let $N$ be a monomial matrix in $\GL_n$, and $\sigma \in \fr{S}_n$ be the associated permutation. Then we have
    \[\langle M \rangle = (-1)^\sigma \langle MN \rangle.\]
    \item \label{enum:an-relations-iii} (Row addition) For any $\lambda\in F^{\times}$ and $i\neq j$ we have 
    \[
        \langle M\rangle = \langle e_{ij}(\lambda)M\rangle+\langle e_{ji}(\lambda^{-1})M\rangle.
    \]
    \item \label{enum:an-relations-iv} (Row permutation and scaling) Let $N$ be a monomial matrix in $\SL_n$, and let $\sigma\in \fr{S}_n$ be the associated permutation. Then we have
    \[
        \langle M\rangle = (-1)^\sigma \langle NM\rangle.
    \]
\end{enumerate}
\end{lemma}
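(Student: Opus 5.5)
Relations (1) and (2) come directly from the Steinberg relations on the second tensor factor. Relations (3) and (4) are obtained by transporting (1) and (2) through the $\SL_n$-action, which moves the identity in the first factor to a different matrix.

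\emph{Relation (2).} Write $N = PD$ with $D$ diagonal and $P$ a permutation matrix (here $P$ is the matrix of $\sigma$ up to the conventions for column indexing). Right multiplication $M\mapsto MN$ permutes the columns of $M$ and scales them. By \cref{def:steinberg}\eqref{enum:steinberg-2} and \eqref{enum:steinberg-3}, $[MN]=(-1)^\sigma[M]$ in $\St_n$. Tensoring with $[\id_n]$ gives $\langle MN\rangle=(-1)^\sigma\langle M\rangle$, and since $(-1)^\sigma=\pm1$ this is the claim.

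\emph{Relation (1).} Relation \eqref{enum:steinberg-1} gives
\[[v_1,\compactldots,v_n]=[\compactldots,v_i+\lambda v_j,\compactldots,v_j,\compactldots]+[\compactldots,v_i,\compactldots,v_i+\lambda v_j,\compactldots]\]
for $i<j$, after using column permutation to reduce to that case. The first term is $[Me_{ji}(\lambda)]$, up to index conventions: right multiplication by $e_{ji}(\lambda)$ adds $\lambda v_j$ to column $i$. In the second term, column $j$ is $v_i+\lambda v_j=\lambda(v_j+\lambda^{-1}v_i)$. Scaling by $\lambda^{-1}$ via \eqref{enum:steinberg-2} turns it into $[Me_{ij}(\lambda^{-1})]$. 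So the relation reads $[M]=[Me_{ji}(\lambda)]+[Me_{ij}(\lambda^{-1})]$. Swapping the roles of $i$ and $j$, which is allowed since $i\neq j$ is arbitrary and column permutation covers the case $i>j$, gives exactly the stated form.

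\emph{Relations (3) and (4).} These need the coinvariants.
\begin{itemize}
\item[(4)] For $N\in\SL_n$, acting by $N$ gives $\langle NM\rangle=[\id]\otimes[NM]=[N^{-1}]\otimes[M]$. The matrix $N^{-1}$ is monomial with permutation $\sigma^{-1}$. By \eqref{enum:steinberg-2} and \eqref{enum:steinberg-3}, $[N^{-1}]=(-1)^{\sigma}[\id]$, which gives (4).
\item[(3)] Apply relation \eqref{enum:steinberg-1} to the first factor $[\id_n]$. It splits $[\id]$ as $[e_{ji}(\lambda)]+[e_{ij}(\lambda^{-1})]$, possibly after a rescaling of a column, exactly as in the proof of (1) with $M=\id$. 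Both $e_{ji}(\lambda)$ and $e_{ij}(\lambda^{-1})$ lie in $\SL_n$. Hence $[e_{ji}(\lambda)]\otimes[M]=[\id]\otimes[e_{ji}(-\lambda)M]$ in coinvariants, and similarly for the other term.
\end{itemize}
This gives $\langle M\rangle=\langle e_{ji}(-\lambda)M\rangle+\langle e_{ij}(-\lambda^{-1})M\rangle$. Replacing $\lambda$ by $-\lambda$ and relabelling $i$ and $j$ gives the claimed relation.

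\emph{Main obstacle.} The only delicate point is bookkeeping of signs and indices: which of $e_{ij}$ and $e_{ji}$ appears, and checking that the rescaling step does not introduce a $\lambda$ versus $\lambda^{-1}$ mismatch. I would verify these conventions on $2\times2$ matrices first.
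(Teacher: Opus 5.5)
Your proof is correct and follows the same route as the paper. Relations (1) and (2) are read off directly from the Steinberg relations. Relation (4) comes from $[N^{-1}]=(-1)^\sigma[\mathrm{id}_n]$ together with $\SL_n$-coinvariance. Relation (3) comes from applying column addition to the first factor $[\mathrm{id}_n]$ and then moving the resulting elementary matrices, which lie in $\SL_n$, onto the second factor. The only difference is that the paper writes out (3) just for $i=1$, $j=2$, whereas you track general indices and the substitution $\lambda\mapsto-\lambda$ explicitly, and that bookkeeping checks out.
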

\begin{proof}
    Relations \eqref{enum:an-relations-i} and \eqref{enum:an-relations-ii} are rephrasings of relations \eqref{enum:steinberg-1}, \eqref{enum:steinberg-2}, and \eqref{enum:steinberg-3} of \cref{def:steinberg} in $\St_n$. To prove \eqref{enum:an-relations-iii}, we give the proof for $i=1$ and $j=2$; the other cases are similar. Using relation \eqref{enum:steinberg-1} in $\St_n$, we can write
    \[
        [e_1,\compactldots,e_n] = [-\lambda e_{1}+e_2,e_2,\compactldots,e_n]+[e_1,-\lambda e_1+e_2,\compactldots,e_n] = [-\lambda e_{1}+e_2,e_2,\compactldots,e_n]+e_{12}(-\lambda).
    \]
    Using relation \eqref{enum:steinberg-2} in $\St_n$, we can change the first term to
    \[
        [-\lambda e_{1}+e_2,e_2,\compactldots,e_n] =[e_1-\lambda^{-1}e_2,e_2,\compactldots,e_n]  = e_{21}(-\lambda^{-1}).
    \]
    This yields an equation $\langle M\rangle = [\rm{id}_n]\otimes[M] = [e_{21}(-\lambda^{-1})]\otimes [M]+[e_{12}(-\lambda)]\otimes [M]$ in $\scr{A}_n$. Finally, observe that $e_{12}(-\lambda)^{-1} = e_{12}(\lambda)$ and $e_{21}(-\lambda^{-1})^{-1} = e_{21}(\lambda^{-1})$ and so in $\scr{A}_n$ we have 
    \[
        [e_{21}(-\lambda^{-1})]\otimes [M]+[e_{12}(-\lambda)]\otimes [M] = \langle e_{21}(\lambda^{-1})M\rangle+\langle e_{12}(\lambda)M\rangle
    \]
    which establishes \eqref{enum:an-relations-iii}. For \eqref{enum:an-relations-iv}, let $P$ denote the permutation matrix obtained by replacing each nonzero entry in $N^{-1}$ with $1$, so that we have $\det(P) =  (-1)^{\sigma}$. Since any permutation matrix can be obtained from column operations on the identity, we have an equality $[\rm{id}_n] = (-1)^\sigma[P] =  (-1)^{\sigma}[N^{-1}]$ in $\St_n$, where the second equality uses column scaling.  Since $N^{-1}\in \SL_n$, we have a string of equalities in $\scr{A}_n$ establishing \eqref{enum:an-relations-iv}:
    \[
        \langle M\rangle =[\rm{id}_n]\otimes [M] =  (-1)^{\sigma}[N^{-1}]\otimes [M] =  (-1)^{\sigma}[\rm{id}_n]\otimes [(NM)] =   (-1)^{\sigma}\langle NM\rangle. \qedhere
    \]
\end{proof}

\begin{remark}\label{rem:row-scaling}
We offer a quick note on the process of scaling rows using relation \eqref{enum:an-relations-iv} in \cref{lem:an-relations}. Since the monomial matrix $N$ is required to be in $\SL_n$, we are \emph{not} allowed to scale a single row by $\lambda\in F^{\times}\setminus \{1\}$.  On the other hand, we can scale one row by $\lambda$ at the cost of scaling another row by $\lambda^{-1}$. Row permutation using the same relation involves a similar correction: a permutation matrix which switches two rows is not in $\SL_n$, but a matrix which switches two rows and multiplies a (possibly different) row by $-1$ is in $\SL_n$. Thus, we may swap rows at the cost of scaling a third row by $-1$, and adding $-1$ on the outside. 
\end{remark}

Using these relations, we are now able to construct a smaller generating set of $\scr{A}_n$ by adapting the arguments in \cite[Section 2]{GKRW25}. Recall from \cref{nota:diag-sdiag} that $\mathrm{sdiag}(a_1,\compactldots,a_{n-1})$ denotes the superdiagonal matrix with entries $a_1,\dots,a_{n-1}\in F$ just above the 1's on the diagonal.

\begin{proposition}\label[proposition]{prop: Jordan reduction}
    $\scr{A}_n$ is generated by the elements $\langle \mathrm{sdiag}(a_1,\compactldots,a_{n-1})\rangle$ for $a_1,\dots,a_{n-1}\in F$.
\end{proposition}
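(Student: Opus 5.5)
We induct on a complexity of the matrix $M$, writing every $\langle M\rangle$ as a $\bb{Z}$-combination of classes $\langle \mathrm{sdiag}(a_1,\compactldots,a_{n-1})\rangle$. The template is the Jordan-type reduction of \cite[Section 2]{GKRW25} for $\GL_n$. Here we only have the relations of \cref{lem:an-relations}: arbitrary column operations, but row operations restricted to additions and to determinant-one monomial matrices (\cref{rem:row-scaling}).

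\textbf{Step 1: reduce to upper triangular $M$ with $1$'s on the diagonal.} Take the complexity of $M$ to be the number of nonzero entries strictly below the diagonal, ordered suitably, for instance lexicographically by position. Suppose some entry $m_{ij}$ with $i>j$ is nonzero, and choose a pivot in the same column or row. Row addition \eqref{enum:an-relations-iii} gives $\langle M\rangle = \langle e_{ij}(\lambda)M\rangle+\langle e_{ji}(\lambda^{-1})M\rangle$.
\begin{itemize}
\item For a suitable $\lambda$, the first term kills the offending entry.
\item The second term is a different matrix, but it shares many entries with $M$. One can apply the column addition relation \eqref{enum:an-relations-i} or a further row addition to it, as in the proof of the $\GL_n$ case.
\end{itemize}
The key is to pick the pivot so that both terms are strictly simpler. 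A convenient choice: first use column permutations and scalings \eqref{enum:an-relations-ii} to place nonzero pivots on the diagonal, which is possible up to sign since $M$ is invertible. Then clear column $1$ below the pivot, then column $2$, and so on.

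Once $M$ is upper triangular, right multiplication by a diagonal matrix (relation \eqref{enum:an-relations-ii}, no sign) makes every diagonal entry $1$. This is where we use the freedom of scaling columns rather than rows, since rows cannot be scaled freely.

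\textbf{Step 2: reduce unipotent upper triangular $M$ to $\mathrm{sdiag}$ form.} Induct on the number of nonzero entries above the superdiagonal. For $M$ upper unitriangular, column addition $e_{ij}(\lambda)$ with $i<j$ keeps $M$ upper unitriangular, and so does row addition $e_{ij}(\lambda)M$ with $i<j$. Each such relation, however, produces a second term $Me_{ji}(\lambda^{-1})$ or $e_{ji}(\lambda^{-1})M$ that is not triangular. The plan is to clear entries far from the diagonal: kill entry $(i,j)$ with $j>i+1$ using entry $(i,i+1)$ or $(j-1,j)$ if it is nonzero. The extra term is then re-triangularized by Step 1, and we must check that the resulting matrices have fewer off-superdiagonal entries. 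If the relevant superdiagonal entry is $0$, the matrix is block-like, and we can first create a nonzero superdiagonal entry by a column addition.

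\textbf{Main obstacle.} The difficulty is the bookkeeping that ensures the second terms in the addition relations genuinely decrease the complexity. I would therefore set up one combined complexity: first the number of subdiagonal nonzeros, then the number of nonzeros above the superdiagonal. I would then check, for each operation used, that both summands drop in this order, mimicking the $\GL_n$ argument. The $\SL_n$ restriction only matters at Step 1's normalization, and column scaling handles that.
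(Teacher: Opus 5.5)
\textbf{The gap is in Step~2. It is exactly the ``main obstacle'' you leave open: nothing guarantees termination, and the complexity you propose does not work.}

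Apply any addition relation from \cref{lem:an-relations} to an upper unitriangular $M$. One of the two summands, $Me_{ji}(\mu)$ or $e_{ji}(\mu)M$ with $j>i$, carries the nonzero entry $\mu$ at position $(j,i)$, below the diagonal. In your combined order (number of subdiagonal nonzeros first) that summand is strictly \emph{larger} than $M$, so ``both summands drop'' fails for every Step~2 move.

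Re-triangularizing that summand by Step~1 gives no control over the entries above the superdiagonal, and your recipe can literally cycle. Take $N_0=\begin{bsmallmatrix}1&0&c\\0&1&b\\0&0&1\end{bsmallmatrix}$ with $b,c\neq 0$. Killing the $(1,3)$-entry against the $(2,3)$-entry by \eqref{enum:an-relations-iii} gives $\langle N_0\rangle=\langle\mathrm{sdiag}(-c/b,b)\rangle+\langle N_1\rangle$, where $N_1=\begin{bsmallmatrix}1&0&c\\-b/c&1&0\\0&0&1\end{bsmallmatrix}$. Now apply Step~1 to $N_1$, clearing $(2,1)$ against the pivot $(1,1)$. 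This yields $\langle N_1\rangle=\langle N_0\rangle-\langle\mathrm{sdiag}(b/c,-c)\rangle$. You obtain only a relation among generators, not an expression for $\langle N_0\rangle$. So a genuinely new idea is needed, not just bookkeeping.

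\textbf{The missing idea} is the one the paper uses. Do not triangularize globally; instead peel off the last row and column. Apply the two-term relations between two \emph{equal} entries, rather than between a pivot and an entry to be killed.
\begin{enumerate}[(1)]
\item Scale columns so the last-row entries lie in $\{0,1\}$. If columns $i\neq j$ both end in $1$, then \eqref{enum:an-relations-i} with $\lambda=-1$ gives $\langle M\rangle=\langle Me_{ij}(-1)\rangle+\langle Me_{ji}(-1)\rangle$, and \emph{each} summand has one fewer nonzero entry in the last row. After a column permutation the last row is $(0,\ldots,0,1)$.
\item Scale rows $1,\ldots,n-1$ so the last-column entries lie in $\{0,1\}$, compensating on row $n$; this only changes $M_{nn}$ (cf.\ \cref{rem:row-scaling}). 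Then use \eqref{enum:an-relations-iii} with $\lambda=-1$ among rows $1,\ldots,n-1$ in the same way.
\item A row swap and a scaling of the last column give last row $(0,\ldots,0,1)$ and last column $(0,\ldots,0,*,1)^{T}$.
\item The top-left $(n-1)\times(n-1)$ block is an arbitrary invertible matrix; no triangularity is ever needed. Recurse on it. Operations on columns $1,\ldots,n-1$, additions among rows $1,\ldots,n-2$, and scalings or swaps compensated on row $n-1$ or $n$ preserve the normalized last row and column up to the free entry $*$.
\end{enumerate}
The induction is on the size of the unreduced block and, within each stage, on the number of nonzero entries in the row or column being cleared. So both summands are always strictly simpler.

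Your Step~1 can be completed, but not as written. In the companion summand $e_{ji}(\lambda^{-1})M$ the pivot $(j,j)$ becomes exactly $0$, and row $j$ vanishes in columns $1,\ldots,j$. You need a swap of rows $i$ and $j$, allowed by \eqref{enum:an-relations-iv}. This restores a nonzero pivot and lowers the count in column $j$. Use the lexicographic order on the column-wise subdiagonal counts and re-pivot at every column, since one initial column permutation does not survive elimination; then Step~1 terminates. With the argument above, though, Step~1 is unnecessary.
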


\begin{proof}This can be verified by implementing the cyclicity part of the proof of \cite[Theorem C]{GKRW25} using \cref{lem:an-relations} but recalling that we cannot arbitrarily scale rows (see \cref{rem:row-scaling}). We will give the details below.

\smallskip

    Let $M = [v_1,\compactldots,v_n]$. We may use the column scaling relation of \cref{lem:an-relations} \eqref{enum:an-relations-ii} to assume that the last coordinate of each $v_i$ is either $0$ or $1$.  Since the set $\{v_i\}$ is a basis, at least one of them has a last coordinate which is $1$. If more than one column has a $1$ in the last coordinate we can use the column addition relation of \cref{lem:an-relations} \eqref{enum:an-relations-i} to write $\langle M \rangle$ as a sum $\langle M_1\rangle+\langle M_2\rangle$ where each of $M_1$ and $M_2$ has strictly fewer $1$'s in the last row.  Repeating this, we may assume that $M$ has a single $1$ in the last row.  By column swapping as in \cref{lem:an-relations} \eqref{enum:an-relations-ii}, we may assume that $M_{nn}=1$.

    Next, we would like to reduce to the case where the last column of $M$ is $0$, except for $M_{n,n}=1$ and $M_{n-1,n}$ is arbitrary. To that end, use the row scaling relation of \cref{lem:an-relations} \eqref{enum:an-relations-iv} to scale the first $n-1$ rows so that their last coordinates are all $0$ or $1$; this can be done at the cost of changing $M_{n,n}$ to some arbitrary nonzero element. Note that every element in the last row, aside from $M_{n,n}$, remains $0$. Now, using the row addition relation of \cref{lem:an-relations} \eqref{enum:an-relations-iii} on the first $n-1$ rows, possibly at the cost of changing the sign of $M_{n,n}$, we may write $\langle M\rangle$ as the sum of matrices $\langle N\rangle$ where the last row of $N$ is zero except for $N_{n,n}$ and the last column is zero except for $N_{n,n}$ and $N_{n-1,n}=1$. Scaling the last column by $N_{n,n}^{-1}$, we have that $\langle M\rangle$ is the sum of matrices of the form
    \[
        \left[\begin{array}{@{}c|c@{}}
  M'
  &  
  \begin{matrix}
  0  \\
  0 \\
  \vdots\\
  0\\
  * \\
  \end{matrix}
  \\
\hline
 \begin{matrix}
   0 & 0 & \dots & 0
  \end{matrix}
 &
  1
\end{array}\right]
    \]
    where $M'$ is in $\GL_{n-1}$ and $*$ is an arbitrary element of $F$. If one repeats the process above, working now on the first $n-1$ columns and rows of the matrix, we see that this matrix can be written as the sum of matrices of the form
    \[
       \left[\begin{array}{@{}c|c@{}}
  M''
  &  
  \begin{matrix}
  0 & 0  \\
  0 & 0 \\
  \vdots & \vdots\\
   * & 0\\
  \end{matrix}
  \\
\hline
 \begin{matrix}
   0 & 0 & \dots & 0\\
   0 & 0 & \dots & 0
  \end{matrix}
 &
  \begin{matrix}
     1 & * \\
      0 & 1
  \end{matrix}
\end{array}\right]
    \]
    where $M''$ is in $\GL_{n-2}$.  Repeating this process $n$ times proves the claim.
\end{proof}

Finally, we observe that some of these generators can be simplified further:

\begin{proposition}\label{prop: superdiagonals}
    If $a_1,\dots,a_{n-1}\in F$ are all nonzero then we have
    \[
        \langle \mathrm{sdiag}(a_1,\compactldots,a_{n-1})\rangle = \left\{ {\textstyle \prod}_{i=1}^{n-1} a_i^{n-i}\right\}_n.
    \]
\end{proposition}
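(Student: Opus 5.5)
We use only the scaling relations of \cref{lem:an-relations}, namely \eqref{enum:an-relations-ii} with $N$ diagonal (so $\sigma$ is trivial and no sign appears) and \eqref{enum:an-relations-iv} with $N$ diagonal of determinant $1$. Together these allow us to replace $M$ by $DMD'$ for any diagonal $D=\mathrm{diag}(d_1,\compactldots,d_n)\in\SL_n$ and any invertible diagonal $D'=\mathrm{diag}(c_1,\compactldots,c_n)$, without changing $\langle M\rangle$. For $M=\mathrm{sdiag}(a_1,\compactldots,a_{n-1})$, the matrix $DMD'$ has diagonal entries $d_ic_i$ and superdiagonal entries $d_i a_i c_{i+1}$, and all other entries are zero.

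We choose $c_i=d_i^{-1}$, which keeps the diagonal equal to $1$. The superdiagonal entries then become $a_i d_i/d_{i+1}$. Our aim is to make the first $n-2$ of them equal to $1$, which forces $d_{i+1}=a_id_i$ for $i\le n-2$, that is, $d_i=d_1\prod_{j<i}a_j$ for $i\le n-1$. The remaining entry $d_n$ is then fixed by the constraint $\det D=1$:
\[
d_n=\Big(\prod_{i=1}^{n-1}d_i\Big)^{-1}=d_1^{-(n-1)}\prod_{j=1}^{n-2}a_j^{-(n-1-j)},
\]
since $a_j$ appears in $d_i$ for exactly the $n-1-j$ indices $j<i\le n-1$. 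All of these choices are legitimate because every $a_i\neq 0$.

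With these choices the last superdiagonal entry is
\[
\frac{a_{n-1}d_{n-1}}{d_n}=a_{n-1}\,d_1\prod_{j\le n-2}a_j\cdot d_1^{\,n-1}\prod_{j\le n-2}a_j^{\,n-1-j}
= d_1^{\,n}\prod_{j=1}^{n-1}a_j^{\,n-j}.
\]
Taking $d_1=1$ gives exactly $\{\prod_i a_i^{n-i}\}_n$, which is the claimed identity. For $n=1$ the statement is vacuous, and for $n=2$ the computation reduces directly to $\{a_1\}_2$.

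The only real obstacle is the bookkeeping of the exponents in the product above, together with checking that \eqref{enum:an-relations-iv} is applied with $N\in\SL_n$, which holds since $\det D=1$ by construction. As a side remark, letting $d_1$ vary shows that $\{\lambda\}_n=\{\mu^n\lambda\}_n$, which is consistent with \cref{mainthm:higherdim}.
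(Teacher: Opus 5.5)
Your proof is correct and is essentially the paper's argument. The paper applies the same diagonal row and column scalings one step at a time: for each $k$ it scales row $k+1$ by $a_k$ and the last row by $a_k^{-1}$, then undoes this on the corresponding columns, which amounts to conjugating by a diagonal matrix in $\SL_n$. You instead combine all of these into a single conjugation $DMD^{-1}$ with $D\in\SL_n$ diagonal, which gives the exponents in closed form and avoids the separate $k=n-2$ case.
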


\begin{example}Before giving the general proof, we give an example with $n=4$ which demonstrates the method; this example is essentially generic. First, we have 
\[
    \begin{bmatrix}
        1 & a_1 & 0 & 0\\
        0 & 1 & a_2 & 0\\
        0 & 0 & 1   & a_3\\
        0 & 0 & 0   & 1\\
    \end{bmatrix}
    \mapsto 
    \begin{bmatrix}
        1 & 1 & 0 & 0\\
        0 & a_1^{-1} & a_2 & 0\\
        0 & 0 & 1   & a_3\\
        0 & 0 & 0   & 1\\
    \end{bmatrix}
    \mapsto\begin{bmatrix}
        1 & 1 & 0 & 0\\
        0 & 1 & a_1a_2 & 0\\
        0 & 0 & 1   & a_3\\
        0 & 0 & 0   & a_1^{-1}\\
    \end{bmatrix}
\]
where the first operation is column scaling and the second is a row scaling.  We then repeat:
\[
\begin{bmatrix}
        1 & 1 & 0 & 0\\
        0 & 1 & a_1a_2 & 0\\
        0 & 0 & 1   & a_3\\
        0 & 0 & 0   & a_1^{-1}\\
    \end{bmatrix}
    \mapsto 
    \begin{bmatrix}
        1 & 1 & 0 & 0\\
        0 & 1 & 1 & 0\\
        0 & 0 & (a_1a_2)^{-1}   & a_3\\
        0 & 0 & 0   & a_1^{-1}\\
    \end{bmatrix}
    \mapsto
    \begin{bmatrix}
        1 & 1 & 0 & 0\\
        0 & 1 & 1 & 0\\
        0 & 0 & 1   & a_1a_2a_3\\
        0 & 0 & 0   & a_1^{-2}a_2^{-1}\\
    \end{bmatrix}
\]
and finally we scale the last column by $a_1^2a_2$, yielding $\{a_1^3a_2^2a_3\}_4$, as claimed.\end{example}

\begin{proof}[Proof of \cref{prop: superdiagonals}]
    We show, for any $1\leq k\leq n-2$, that
    \[
    \langle \mathrm{sdiag}(1,\compactldots,1,a_{k},a_{k+1},\dots,a_{n-1})\rangle =\langle \mathrm{sdiag}(1,\dots,1,a_ka_{k+1},a_{k+2},a_{k+3},\compactldots,a_ka_{n-1})\rangle
    \]
    where the term on the right has a $1$ in the $k$th spot and multiplies the $(k+1)$th and last coordinates by $a_k$ (if the $k+1$th is the last coordinate, we multiply it by $a_k^2$).  Applying this for $k=1,\dots,n-2$ yields the claim. By definition we have
    \[
    \langle \mathrm{sdiag}(1,\compactldots,1,a_{k},a_{k+1},\compactldots,a_{n-1})\rangle = \langle e_1,\dots,e_{k-1}+e_{k},a_ke_k+e_{k+1},\dots,a_{n-1}e_{n-1}+e_n \rangle
    \]
    where $a_ke_k+e_{k+1}$ occurs in the $(k+1)$th column. Scaling the $(k+1)$th column by $a_k^{-1}$, we see this equals
    \[
        \langle e_1,\compactldots,e_{k-1}+e_{k},e_k+a_k^{-1}e_{k+1},\compactldots,a_{n-1}e_{n-1}+e_n\rangle .
    \]
    Next, scaling the $(k+1)$th row by $a_k$, and the last row by $a_k^{-1}$, we see this equals
    \[
        \langle e_1,\compactldots,e_{k-1}+e_{k},e_k+e_{k+1},a_ka_{k+1}e_{k+1}+e_{k+2},\compactldots,a_{n-1}e_{n-1}+a_k^{-1}e_n \rangle.
    \]
    Finally, scaling the last column by $a_k$, this is equal to 
    \[
        \langle e_1,\dots,e_{k-1}+e_{k},e_k+e_{k+1},a_ka_{k+1}e_{k+1}+e_{k+2},\dots,a_ka_{n-1}e_{n-1}+e_n \rangle
    \]
    which is $\langle \mathrm{sdiag}(1,\dots,1,a_ka_{k+1},a_{k+2},a_{k+3},\dots,a_ka_{n-1})\rangle$ as claimed.
\end{proof}

\subsection{Generating $\scr{A}_*$ as a graded $\bb{Z}[F^\times]$-algebra} To construct an even smaller generating set, we show that the abelian groups $\scr{A}_n$ assemble into a graded $\bb{Z}[F^\times]$-algebra.

We start by defining a $\bb{Z}[F^\times]$-module structure on $\scr{A}_n$. Since $\St^2_n$ has an action not just by the group $\SL_n$ but by the larger group $\GL_n$, the coinvariants $\scr{A}_n$ admit a residual action of $\GL_n/\SL_n\cong F^{\times}$.  Explicitly, if $\lambda\in F^{\times}$ and $I^{\lambda}\in \GL_n$ denotes the diagonal matrix $I^{\lambda} = \rm{diag}(1,\compactldots,1, \lambda)$, then we have $\lambda \cdot [M]\otimes [N] = [I^{\lambda}M]\otimes [I^{\lambda}N]$. Since $[I^{\lambda}] = [\rm{id}_n]$ in $\St_n$, it follows that $\lambda\cdot \langle M\rangle = \langle I^{\lambda}M\rangle$ for any $M\in \GL_n$.

\begin{lemma}\label{lem:f-times-action} \,
    \begin{enumerate}[(i)]
    \item \label{enum:f-times-action-i}For any $a,\lambda\in F^{\times}$ we have
    $\lambda\cdot \{a\}_n = \{\lambda^{-1}a\}_n$.
    \item \label{enum:f-times-action-ii} For any $\lambda\in F^{\times}$ we have $\{\lambda^n\}_n = \{1\}_n$.
    \end{enumerate}
\end{lemma}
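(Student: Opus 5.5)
For part (i), I will compute directly with the description of the residual action. By definition $\{a\}_n = \langle \mathrm{sdiag}(1,\compactldots,1,a)\rangle$, and the discussion before the lemma gives $\lambda\cdot\{a\}_n = \langle I^{\lambda}\,\mathrm{sdiag}(1,\compactldots,1,a)\rangle$. Multiplying on the left by $I^\lambda = \mathrm{diag}(1,\compactldots,1,\lambda)$ only changes the last row, so the resulting matrix agrees with $\mathrm{sdiag}(1,\compactldots,1,a)$ except that the $(n,n)$ entry is $\lambda$. Scaling the last column by $\lambda^{-1}$ with \cref{lem:an-relations}\eqref{enum:an-relations-ii} (trivial permutation, so no sign) turns the $(n,n)$ entry back into $1$ and the $(n-1,n)$ entry into $\lambda^{-1}a$. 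This gives $\{\lambda^{-1}a\}_n$. For $n=1$ both sides equal $\langle \mathrm{id}_1\rangle$: indeed $\langle(\lambda)\rangle=\langle \mathrm{id}_1\rangle$ by column scaling, which matches the convention $\{\cdot\}_1 = \langle \mathrm{id}_1\rangle$.

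For part (ii), I will exhibit the $\lambda$-action in a second way and compare. The key observation is that $I^\lambda$ and $\lambda^{-1}$ times the identity differ in $\GL_n/\SL_n$ only by an element of $\SL_n$ together with a scalar factor. Concretely, since $\lambda\cdot\langle M\rangle=\langle gM\rangle$ for any $g\in\GL_n$ with $\det g=\lambda$ (using $[g]=[\mathrm{id}_n]$ up to the coinvariance), I will take $g=\mathrm{diag}(\mu,\compactldots,\mu)$ with $\mu^n=\lambda$. Then I need to justify this choice more carefully: the action of $\lambda$ on $\langle M\rangle=[\mathrm{id}]\otimes[M]$ is $[g]\otimes[gM]$ for any $g$ with determinant $\lambda$, because two such $g$ differ by $\SL_n$. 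Taking $g=\mu\,\mathrm{id}_n$ gives $[\mu\,\mathrm{id}]\otimes[\mu M]=[\mathrm{id}]\otimes[M]$ by column scaling in both factors. So $\mu^n$ acts trivially on every $\langle M\rangle$.

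Applying this with part (i), $\{1\}_n = \mu^n\cdot\{\mu^n\}_n$, which gives $\{\lambda^n\}_n=\{1\}_n$ after replacing $\mu$ by $\lambda$. The first statement is a purely mechanical computation. The main obstacle is the well-definedness claim, namely that the residual action may be computed using any representative $g$ of the coset $\lambda\in\GL_n/\SL_n$, including $g$ acting on the first tensor factor. This holds because $[g]\otimes[gM] = h\cdot([g']\otimes[g'M])$ with $h=gg'^{-1}\in\SL_n$, and then $[g']=[\mathrm{id}]$ for diagonal $g'$.
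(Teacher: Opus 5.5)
Your proof is correct. Part (i) is the paper's computation; you apply $I^\lambda$ to $\mathrm{sdiag}(1,\compactldots,1,a)$ directly, whereas the paper does it for $\{1\}_n$ and then uses $\{a\}_n=a^{-1}\cdot\{1\}_n$, which makes no difference.

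For part (ii) your route differs from the paper's proof. The paper stays within the generators-and-relations calculus. It scales row $1$ by $\lambda$ and row $n$ by $\lambda^{-1}$, which is allowed because $\mathrm{diag}(\lambda,1,\compactldots,1,\lambda^{-1})\in\SL_n$. It then scales column $1$ by $\lambda^{-1}$ and column $n$ by $\lambda$, obtaining $\langle\mathrm{sdiag}(\lambda,1,\compactldots,1,\lambda)\rangle$, which \cref{prop: superdiagonals} identifies with $\{\lambda^{n-1}\cdot\lambda\}_n=\{\lambda^n\}_n$.

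You instead use the argument the paper only sketches in the Remark following the lemma. The matrix $\mu\,\id_n$ has determinant $\mu^n$ and already acts trivially on $\St_n\otimes\St_n$, by column scaling in both factors. Hence $\mu^n$ acts trivially on $\scr{A}_n$, and (i) converts this into $\{\mu^n\}_n=\{1\}_n$. What your approach buys:
\begin{itemize}
\item it avoids \cref{prop: superdiagonals};
\item it explains conceptually why exactly $n$th powers appear;
\item it gives the slightly stronger fact that $(F^\times)^n$ acts trivially on all of $\scr{A}_n$, so the $\bb{Z}[F^\times]$-module structure factors through $\bb{Z}[F^\times/(F^\times)^n]$; combined with cyclicity, this is Theorem~B(i).
\end{itemize}
Your justification is also the precise one. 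What forces a trivial action on coinvariants is that $\mu\,\id_n$ acts trivially on $\St_n$ itself, not merely that it centralizes $\SL_n$. The paper's computation, in turn, is an explicit chain of relations in the same style as the surrounding arguments.

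Three expository points need fixing.
\begin{itemize}
\item Your intermediate claim that $\lambda\cdot\langle M\rangle=\langle gM\rangle$ for every $g$ with $\det g=\lambda$ is false as stated, since $[g]$ need not equal $[\id_n]$ in $\St_n$. The correct statement, which you go on to prove and use, is that $\lambda\cdot\langle M\rangle$ is the class of $[g]\otimes[gM]$.
\item You should not choose $\mu$ with $\mu^n=\lambda$, since no such $\mu$ need exist. Take $\mu\in F^\times$ arbitrary and conclude that $\mu^n$ acts trivially, as your final step effectively does.
\item The sentence comparing $I^\lambda$ with $\lambda^{-1}\id_n$ in $\GL_n/\SL_n$ is garbled and can simply be deleted.
\end{itemize}
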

\begin{proof}
    By the comment preceding the statement of this lemma, we have that $\lambda \cdot \{1\}_n = \langle M\rangle$ where $M$ is the matrix which is equal to the matrix representing $\{1\}_n$, but with $M_{nn}=\lambda$ instead of $1$.  Scaling the last column of $M$ by $\lambda^{-1}$, we see that $\lambda\cdot \{1\}_n=\langle M\rangle = \{\lambda^{-1}\}_n$ for any $\lambda\in F^{\times}$.  Thus, for any $\lambda,a\in F^{\times}$, the following equation proves \eqref{enum:f-times-action-i}
    \[
        \lambda\cdot\{a\}_n = \lambda\cdot a^{-1}\cdot\{1\}_n = \{\lambda^{-1}a\}_n.
    \]
    To prove \eqref{enum:f-times-action-ii}, we begin by scaling the first row of $\{1\}_n$ by $\lambda$ and the last row by $\lambda^{-1}$ so we have 
    \[
        \{1\}_n = \langle [\lambda e_1,\lambda e_1+e_2, e_2+e_3,\compactldots,e_{n-1}+\lambda^{-1}e_{n}]\rangle.
    \]
    Scaling the first column by $\lambda^{-1}$ and the last column by $\lambda$ we have 
    \[
        \langle [\lambda e_1,\lambda e_1+e_2, e_2+e_3,\compactldots,e_{n-1}+\lambda^{-1}e_{n]}]\rangle = \langle\mathrm{sdiag}(\lambda,1,1\compactldots,1,\lambda)\rangle
    \]
    which is equal to $\{\lambda^{n-1}\lambda\}_n = \{\lambda^{n}\}_n$ by \cref{prop: superdiagonals}.
\end{proof}

\begin{remark}
    \cref{lem:f-times-action} \eqref{enum:f-times-action-ii} also follows from the observation that the residual action of the scalar matrix $\lambda \cdot \id_n\in Z(\GL_n)$ sends $\{a\}_n$ to $\{\lambda^na\}_n$. But this matrix centralises $\SL_n,$ so we must have  $\{a\}_n=\{\lambda^na\}_n$.
\end{remark}
Next we describe an associative nonunital product on $\scr{A}_* \coloneq \bigoplus_{n \geq 1} \scr{A}_n$. There is a natural pairing
\[
    \scr{A}_n\otimes\scr{A}_m \longrightarrow \scr{A}_{n+m}
\]
defined on elements of the form $\gen{M}$ via block sum of matrices: it sends $\gen{M}\otimes\gen{N}$ to $\gen{M\oplus N}$. This is compatible with the $\bb{Z}[F^{\times}]$-module structure, making $\scr{A}_*$ into a graded associative nonunital $\bb{Z}[F^{\times}]$-algebra. 

\begin{remark}
    The product structure on $\scr{A}_*$ defined above agrees with the product structure mentioned in the introduction. Upon interpreting $\scr{A}_*$ in terms of the $E_2$-homology groups of $\mathbf{BSL}(F)_\bb{Z}$ (see \cref{sec:ek-cells}) it is the residual product obtained from the $E_\infty$-algebra structure.
\end{remark}

\begin{theorem}\label{theorem: An generated by 1s}
    The graded $\bb{Z}[F^{\times}]$-algebra $\scr{A}_*$ is generated by classes of the form $\{1\}_n$ for $n\geq 1$. In fact, for every fixed $n$ the $\bb{Z}[F^{\times}]$-module $\scr{A}_n$ is generated by $\{1\}_n$.
\end{theorem}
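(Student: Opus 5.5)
By \cref{prop: Jordan reduction}, $\scr{A}_n$ is generated as an abelian group by the classes $\langle \mathrm{sdiag}(a_1,\compactldots,a_{n-1})\rangle$ with $a_i\in F$. The second assertion (each $\scr{A}_n$ is a cyclic $\bb{Z}[F^\times]$-module on $\{1\}_n$) implies the first, so I will prove only the module statement, by induction on $n$. The case $n=1$ is trivial, since $\scr{A}_1$ is generated by $\langle\rm{id}_1\rangle=\{1\}_1$.

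Split the generators by whether all $a_i$ are nonzero. If they are, \cref{prop: superdiagonals} identifies the generator with $\{a\}_n$ for $a=\prod a_i^{n-i}$. By \cref{lem:f-times-action}\eqref{enum:f-times-action-i}, $\{a\}_n = a^{-1}\cdot\{1\}_n$, so it lies in the submodule generated by $\{1\}_n$.

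Now suppose some $a_k=0$. Then $\mathrm{sdiag}(a_1,\compactldots,a_{n-1})$ is a block sum $\mathrm{sdiag}(a_1,\compactldots,a_{k-1})\oplus\mathrm{sdiag}(a_{k+1},\compactldots,a_{n-1})$ of sizes $k$ and $n-k$. Hence the generator equals the product $\langle M\rangle\cdot\langle N\rangle$ of classes in $\scr{A}_k$ and $\scr{A}_{n-k}$ under the block-sum pairing.

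By induction, $\langle M\rangle = x\cdot\{1\}_k$ and $\langle N\rangle = y\cdot \{1\}_{n-k}$ with $x,y\in\bb{Z}[F^\times]$. Since the pairing is compatible with the module structure in each variable, the generator equals $xy\cdot(\{1\}_k\cdot\{1\}_{n-k})$. So it suffices to show that $\{1\}_k\cdot\{1\}_{n-k}=\langle \mathrm{sdiag}(1,\compactldots,1,0,1,\compactldots,1)\rangle$ lies in $\bb{Z}[F^\times]\cdot\{1\}_n$. For this I would use the column addition relation \cref{lem:an-relations}\eqref{enum:an-relations-i}, which needs care because it is written with $\lambda\neq 0$, and apply it with the $(k+1)$th column $e_{k+1}$ and the $k$th column $e_{k-1}+e_k$ (or $e_k$ if $k=1$). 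This gives
\[
\langle\ldots, e_{k+1},\ldots\rangle = \langle \ldots, e_k+e_{k+1},\ldots\rangle \pm \langle\ldots\rangle,
\]
where the first term is $\{1\}_n$ (all superdiagonal entries equal $1$). For the second term I would apply relation (a) in the form $[v_i,v_j]=[v_i+v_j,v_j]+[v_i,v_i+v_j]$ with $v_i=e_{k-1}+e_k$ and $v_j=e_{k+1}$. This produces the matrix with $k$th column $e_{k-1}+e_k+e_{k+1}$ and $(k+1)$th column $e_{k+1}$.

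The main obstacle is this second term: it is not superdiagonal, and it must be brought back into superdiagonal form with all entries nonzero, or shown to vanish. My first attempt is a row operation, subtracting row $k$ from row $k+1$ via \cref{lem:an-relations}\eqref{enum:an-relations-iii}. Row addition also produces two terms, one possibly with a zero superdiagonal entry of smaller total complexity. I would therefore set up a secondary induction on the number of zero superdiagonal entries. If the term does not simplify, the fallback is to repeat the Jordan-reduction procedure of \cref{prop: Jordan reduction} on it and verify that every resulting superdiagonal matrix has strictly fewer zeros than the one we started with; together with the induction on $n$, this would close the argument.
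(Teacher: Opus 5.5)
\textbf{There is a genuine gap.} Your reduction is the same as the paper's. Via \cref{prop: Jordan reduction}, \cref{prop: superdiagonals}, \cref{lem:f-times-action} and the $\bb{Z}[F^\times]$-bilinearity of the block-sum product, everything comes down to showing $\{1\}_k\cdot\{1\}_{n-k}\in\bb{Z}[F^\times]\cdot\{1\}_n$, which is exactly Claim~\ref{claim:product}. That claim is the real content of the theorem (the paper spends all of \cref{sec:proof-of-claim} on it), and your proposal does not prove it.

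The one concrete step you give is also wrong for $k\geq 2$. Adding the $k$th column $e_{k-1}+e_k$ to the $(k+1)$th column $e_{k+1}$ gives $e_{k-1}+e_k+e_{k+1}$, not $e_k+e_{k+1}$. So your ``first term'' is not $\mathrm{sdiag}(1,\compactldots,1)$: it is a matrix with an extra nonzero entry in position $(k-1,k+1)$, and it would itself have to be evaluated. Column scalings, column permutations and monomial row moves all preserve the number of nonzero entries, so no one-term move turns it into $\{1\}_n$. Only for $k=1$ is the first term $\{1\}_n$. Even then the second term $[e_1+e_2,e_2,j_3,\compactldots,j_n]$ is genuinely nontrivial: for $n=2$ it equals $\{-1\}_2$, and for even $n$ it equals $(\tfrac n2-1)\{1\}_n+\tfrac n2\{-1\}_n$ by \cref{thm: products}.

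Your proposed remedy does not close the gap. A secondary induction on the number of zero superdiagonal entries does not apply, because the terms you produce are not superdiagonal. Re-running the procedure of \cref{prop: Jordan reduction} does not help either: its outputs are arbitrary $\mathrm{sdiag}(a_1,\compactldots,a_{n-1})$, and nothing in the procedure bounds how many $a_i$ vanish. It can therefore hand back block-sum products of exactly the kind you are trying to eliminate, so as stated the fallback assumes what must be shown.

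What is missing is a family of intermediate matrices that is closed under the available moves and comes with a well-founded order. The paper uses $M(n,a,i)$, which is $\{1\}_n$ with its $(a+1)$th column replaced by $e_i+e_{a+1}$, together with $N(n,a,i)$, the same matrix with bottom-right entry $-1$. The argument runs in three steps.
\begin{enumerate}[(i)]
\item Adding the \emph{first} column $e_1$ gives $\{1\}_a\{1\}_b=M(a,1)+M(a+1,1)$ or $M(a,1)+N(a+1,1)$, according to the parity of $a$ (\cref{proposition: products in terms of M and N}).
\item One column addition, followed by cycling columns and rows with the sign correction of \cref{rem:row-scaling}, rewrites $M(a,i)$ and $N(a,i)$ in terms of the same family with $i$ raised by one and $a$ either unchanged or raised by one (\cref{lemma: M and N reduction}).
\item The recursion terminates at $M(a,a)=\{1\}_n$ and $N(a,a)=\{-1\}_n$, together with the separately treated case $a=n-1$.
\end{enumerate}
You need an argument of this kind, moving the stray entry step by step onto the superdiagonal with a measure that provably decreases, to complete the proof.
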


\begin{proof}
    We will prove the second statement of the theorem (as it certainly implies the first): By \cref{prop: Jordan reduction}, it suffices to prove that every element of the form $\langle \mathrm{sdiag}(a_1,\compactldots,a_{n-1})\rangle$ is a $\bb{Z}[F^{\times}]$-multiple of $\{1\}_n$. Every such element $\langle \mathrm{sdiag}(a_1,\compactldots,a_{n-1})\rangle$ is by construction a product $\langle J_1\rangle \cdots \langle J_r\rangle$ of $r$-many superdiagonal matrices $J_i$ with no zeros on the superdiagonal, where $r$ is one more than the number of $i$ for which $a_i=0$. By \cref{prop: superdiagonals}, each $\langle J_i\rangle$ can be expressed as an element of the form $\{\lambda_i\}_{b_i}$ for some $\lambda_i \in F^{\times}$. Thus, it suffices to establish the following claim, which is the content of \cref{sec:proof-of-claim} below.
    \begin{claim}
        \label{claim:product} Every product $\{\lambda_1\}_{a}\cdot \{\lambda_2\}_{b}$ can be expressed as a sum of terms of the form $\{\mu_j \}_{a+b}$.
    \end{claim}
    \noindent Indeed, using that $\mu^{-1}\cdot \{1\}_n = \{\mu\}_n$ by \cref{lem:f-times-action} \eqref{enum:f-times-action-i}, the following string of equalities then completes the proof:
    \[
       \langle \mathrm{sdiag}(a_1,\compactldots,a_{n-1})\rangle = \langle J_1\rangle \cdots \langle J_r\rangle = \{\lambda_1\}_{b_1} \cdots \{\lambda_r\}_{b_r} = \sum_j \{\mu_j\}_{n} = \left( \sum_j \mu_j^{-1} \right) \cdot \{1\}_{n}.\qedhere
    \]
\end{proof}

\subsection{Proof of \cref{claim:product}}
\label{sec:proof-of-claim}
The missing step in the argument for \cref{theorem: An generated by 1s} is the proof of \cref{claim:product}, which occupies the remainder of this section. We start by observing that
\[
    \{\lambda_1\}_a\cdot \{\lambda_2\}_b = (\lambda_1\lambda_2)^{-1}\cdot \{1\}_a\cdot \{1\}_b.
\]
Thus, it suffices to prove that each product $\{1\}_a\cdot \{1\}_b$ is the sum of terms of the form $\{\mu_i\}_{a+b}$. Our formula for these products features the following two sequences of numbers:

\begin{definition}\label{defn: lozanic}
    We recursively define numbers $P(n,k)$ and $Q(n,k)$ for $n\geq k\geq 0$.  We let $P(n,k)=1$ if $n\leq 2$ or $k\in \{0,n\}$.  From there, $P(n,k)$ is defined recursively as
\[
    P(n,k) \coloneq \begin{cases}
        P(n-1,k-1)+P(n-1,k)- {{(n/2)-1}\choose{(k-1)/2}} & \text{if $n$ is even and $k$ is odd,}\\
        P(n-1,k-1)+P(n-1,k) & \text{else.}
    \end{cases}
\]
In all cases we define $Q(n,k) \coloneqq {{n}\choose{k}}-P(n,k)$.
\end{definition}

\begin{theorem}\label{thm: products}
    For $a,b \geq 1$ we have 
    \[
        \{1\}_a\cdot \{1\}_b = P(a+b,a)\{1\}_{a+b}+Q(a+b,a)\{-1\}_{a+b}.
    \]
\end{theorem}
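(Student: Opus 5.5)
We will prove the formula by induction on $a+b$, using a recursion that mirrors the Pascal-type recursion defining $P(n,k)$. The product $\{1\}_a\cdot\{1\}_b$ is represented by $\langle \mathrm{sdiag}(1,\compactldots,1,0,1,\compactldots,1)\rangle$, with the $0$ in position $a$. The natural move is to apply the column addition relation of \cref{lem:an-relations}~\eqref{enum:an-relations-i} to the two columns straddling the block boundary. Concretely, we add $\lambda$ times column $a$ to column $a+1$ (or the reverse), with $\lambda=\pm1$ chosen so that one resulting matrix fills the zero in the superdiagonal. This gives the identity
\[
\langle \mathrm{sdiag}(\ldots,1,0,1,\ldots)\rangle = \langle X\rangle + \langle Y\rangle .
\]
In one of $\langle X\rangle,\langle Y\rangle$ the superdiagonal is full. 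The other should, after row and column reductions as in the proof of \cref{prop: Jordan reduction}, be rewritten as a product with the boundary shifted by one, namely $\pm\{\epsilon\}_{a-1}\cdot\{1\}_{b+1}$ or its mirror $\{1\}_{a+1}\cdot\{\epsilon\}_{b-1}$, plus terms of full length.

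The key steps, in order, are as follows.

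\begin{enumerate}[(1)]
\item Compute explicitly $\{1\}_1\cdot\{1\}_1=\langle\mathrm{id}_2\rangle$. Using the row-addition relation together with column addition, express it as $\{1\}_2+\{-1\}_2$ up to the appropriate sign conventions. This gives the base values $P(2,1)=Q(2,1)=1$.
\item Derive a general \emph{boundary-moving relation} of the form
\[
\{1\}_a\cdot\{1\}_b = \{\pm1\}_{a+b} + \{\epsilon\}_{a-1}\cdot\{1\}_{b+1}
\]
or a symmetric variant. Here the signs are tracked through \cref{prop: superdiagonals} and \cref{lem:f-times-action}. Iterating this reduces every product to the base case $\{1\}_1\cdot\{1\}_{n-1}$.
\item Check that the resulting coefficients satisfy the recursion of \cref{defn: lozanic}. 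Since $\{\lambda^n\}_n=\{1\}_n$, the sign $-1$ is a $n$th power when $n$ is odd, so $\{-1\}_n=\{1\}_n$ for odd $n$. This collapse is the source of the parity dependence. The correction term $\binom{n/2-1}{(k-1)/2}$ for $n$ even and $k$ odd should arise from the symmetric (palindromic) terms, where $a$ and $b$ interchange under the reversal symmetry. That symmetry is the antidiagonal conjugation, which lies in $\SL_n$ up to a sign and maps $\{\lambda\}$ to $\{\lambda^{-1}\}$-type classes after \cref{prop: superdiagonals}.
\item Verify the total: summing the coefficients must give $\binom{a+b}{a}$. This is consistent with the $\GL_n$-coinvariants being $\bbZ$ with $\{1\}_a\cdot\{1\}_b\mapsto\binom{a+b}{a}$, which is a useful sanity check and explains why $Q=\binom{n}{k}-P$.
\end{enumerate}

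The main obstacle will be step (2)–(3), namely the precise bookkeeping of the signs $\epsilon$. Products like $\{-1\}_{a-1}\cdot\{1\}_{b+1}$ appear, and these equal $(-1)\cdot\{1\}_{a-1}\cdot\{1\}_{b+1}$ via the $F^\times$-action. So the induction really must be run over the $\bbZ[\{\pm1\}]$-module spanned by $\{1\}_n,\{-1\}_n$, multiplying by $-1\in F^\times$, which swaps the coefficients $P$ and $Q$. I would first try to prove the cleaner statement that $\{1\}_a\cdot\{1\}_b$ equals the sum over all $\binom{a+b}{a}$ shuffles of terms $\{(-1)^{s}\}_{a+b}$, with $s$ a statistic such as an inversion or crossing parity. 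I would then show by a Pascal-type count that the number of shuffles with $s$ even equals $P(a+b,a)$; the subtracted binomial term in the recursion then accounts for the reversal-symmetric shuffles.
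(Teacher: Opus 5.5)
Your proposal has a genuine gap at step (2), which carries the whole content of the theorem. The relation there is asserted rather than derived, and in the form you state it is false.

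Use the map $\scr{A}_n\to(\St^2_n)_{\GL_n}\cong\bbZ$ that you yourself invoke in step (4). Under it every $\{\lambda\}_n$ goes to $1$ and $\{1\}_a\cdot\{1\}_b$ goes to $\binom{a+b}{a}$. Your boundary-moving relation would therefore force $\binom{n}{a}=1\pm\binom{n}{a-1}$, which fails already for $n=4$, $a=2$. Indeed the theorem gives $\{1\}_2\{1\}_2-\{1\}_1\{1\}_3=2\{1\}_4$.

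The mechanism you describe also cannot work. For $a\ge 2$, column $a$ of the block matrix is $e_{a-1}+e_a$, not $e_a$, while column $a+1$ is $e_{a+1}$. So for any $\lambda$, neither term of the column-addition relation on columns $a,a+1$ has a full superdiagonal, and neither is a block sum.

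Finally, the endpoint $\{1\}_1\cdot\{1\}_{n-1}$ of your iteration is not covered by step (1), which handles only $n=2$. Column addition gives $\{1\}_1\{1\}_{n-1}=\{1\}_n+[e_1+e_2,e_2,j_3,\ldots,j_n]$. The second class equals $(\frac n2-1)\{1\}_n+\frac n2\{-1\}_n$ for even $n$, is not a product, and needs its own recursion.

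The missing idea is to stop using products as the inductive objects and instead track a larger family that is closed under the available moves. The paper uses two such classes:
\begin{enumerate}[(i)]
\item $M(n,a,i)$, which is $\{1\}_n$ with column $a+1$ replaced by $e_i+e_{a+1}$;
\item $N(n,a,i)$, which is the same matrix with bottom-right entry $-1$.
\end{enumerate}
In particular $M(a,a)=\{1\}_n$ and $N(a,a)=\{-1\}_n$.

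One column addition, followed by cycling a block of columns and the corresponding rows, gives $\{1\}_a\{1\}_b=M(a,1)+M(a+1,1)$ or $M(a,1)+N(a+1,1)$, according to the parity of $a$. Here the sign from the row cycle is absorbed into the last row, as in \cref{rem:row-scaling}. The same moves give a closed recursion: $M(a,i)=M(a,i+1)+M(a+1,i+1)$ for $a$ even, $M(a,i)=N(a,i+1)+M(a+1,i+1)$ for $a$ odd, and likewise with $M$ and $N$ exchanged. The remaining base case, $a=n-1$, is an explicit one-parameter computation.

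Passing from $M$ to $N$ swaps the roles of $P$ and $Q$. So this two-parameter Pascal recursion, with parity-dependent swaps, yields Losanitsch's recursion directly, and no reversal symmetry is needed. Your shuffle-statistic idea is a sensible heuristic, since $P(n,k)$ counts $k$-subsets of $\{1,\ldots,n\}$ up to reversal. But without relations in $\scr{A}_n$ that realize such a sum, it is not yet an argument.
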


\begin{example}
    In the quotient $(\scr{A}_n)_{F^{\times}}$, we have $\ol{\{1\}}_n = \ol{\{-1\}}_n$ and this product formula becomes the divided power structure that appears in \cite[Theorem 6.9]{GKRW25}
    \[
        \ol{\{1\}}_a\cdot \ol{\{1\}}_b = P(a+b,a)\ol{\{1\}}_{a+b}+Q(a+b,a)\ol{\{1\}}_{a+b} = {\textstyle {{a+b}\choose {a}}} \ol{\{1\}}_{a+b}.
    \]
    By construction the quotient map $\bigoplus_{n \geq 1} (\St^2_n)_{\SL_n} \to \bigoplus_{n \geq 1} (\St^2_n)_{\GL_n}$ is a map of algebras.
\end{example}

\begin{example}\label{exam:products-odd}
    When $a+b$ is odd we have that $\{1\}_{a+b} = \{-1\}_{a+b}$, which can be seen by scaling the first $(a+b)-1$ rows by $-1$ and then scaling the first $(a+b)-1$ columns by $-1$ using \cref{lem:an-relations}. This works more generally for any $a,b$ when $F$ contains an $n$th root of $-1$ (or equivalently, a $\nu_2(n)$th root of $-1$). Thus, in this case the formula of \cref{thm: products} simplifies to
    \[
        \{1\}_a\cdot \{1\}_b = (P(a+b,a)+Q(a+b,a))\{1\}_{a+b} = {\textstyle {{a+b}\choose{a}}}\{1\}_{a+b}.
    \]
\end{example}

Before commencing with the proof of \cref{thm: products}, we observe that the numbers $P(n,k)$ and $Q(n,k)$ also admit a closed form in terms of binomial coefficients. This is the content of the next lemma, whose proof is routine and hence omitted.

\begin{lemma}\label{lem:expressions-of-p-and-q}
    Let $0<k<n$, and write $n_2 \coloneq \lfloor n/2\rfloor$ and $k_2 \coloneq \lfloor k/2\rfloor$. We have
    \begin{align*}
        P(n,k) &= \begin{cases}
            \frac{1}{2}{n\choose{k}} & \text{if $n$ is even and $k$ is odd,}\\
            \frac{1}{2}\left({n\choose{k}}+ {n_2\choose{k_2}}\right) & \text{otherwise,}
        \end{cases} \\
    Q(n,k) &= \begin{cases}
            \frac{1}{2}{n\choose{k}} & \text{if $n$ is even and $k$ is odd.}\\
            \frac{1}{2}\left( {n\choose{k}}-{n_2\choose{k_2}}\right) & \text{otherwise.}
        \end{cases}
    \end{align*}
\end{lemma}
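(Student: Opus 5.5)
The plan is to verify the closed forms by strong induction on $n$, showing that they satisfy the boundary conditions and the recursion of \cref{defn: lozanic}. The formula for $Q(n,k)$ then follows at once from $Q(n,k)=\binom{n}{k}-P(n,k)$. So it suffices to treat $P$.

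Write $R(n,k)$ for the proposed right-hand side. I extend it to $k\in\{0,n\}$ using the ``otherwise'' branch. At those endpoints it gives $R(n,0)=\tfrac12(1+1)=1$. It also gives $R(n,n)=1$, since when $n$ is even, $n$ itself is even; when $n$ is odd, $\lfloor n/2\rfloor$ appears in both slots, so $\binom{n_2}{n_2}=1$. For $n\le 2$ and $0<k<n$ the only case is $n=2,k=1$, where $R=\tfrac12\binom21=1$. So $R$ agrees with $P$ on the boundary data.

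Next, using Pascal's rule for $\binom{n}{k}$, the recursion reduces to an identity among the correction terms $c(n,k)$. Here $c(n,k)=0$ if $n$ is even and $k$ is odd, and $c(n,k)=\binom{n_2}{k_2}$ otherwise. The claim becomes
\[
c(n-1,k-1)+c(n-1,k)-2\delta = c(n,k),
\]
where $\delta=\binom{n/2-1}{(k-1)/2}$ when $n$ is even and $k$ is odd, and $\delta=0$ otherwise. I check this by parity, writing $n=2m$ or $n=2m+1$.

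\begin{itemize}
\item \textbf{$n=2m$, $k=2j+1$.} Here $n-1$ is odd, so both terms on the left are nonzero. They equal $\binom{m-1}{j}+\binom{m-1}{j}$, and subtracting $2\binom{m-1}{j}$ gives $0=c(n,k)$.
\item \textbf{$n=2m$, $k=2j$.} The left side is $c(2m-1,2j-1)+c(2m-1,2j)=\binom{m-1}{j-1}+\binom{m-1}{j}=\binom{m}{j}=c(n,k)$.
\item \textbf{$n=2m+1$, $k=2j$.} Now $n-1=2m$ is even and $k-1$ is odd, so $c(2m,2j-1)=0$. The left side is $c(2m,2j)=\binom{m}{j}=c(n,k)$.
\item \textbf{$n=2m+1$, $k=2j+1$.} Here $c(2m,2j)=\binom{m}{j}$ and $c(2m,2j+1)=0$, so the left side is $\binom{m}{j}=c(n,k)$.
\end{itemize}

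This completes the induction. The only real care needed is the bookkeeping of the floors and the endpoint cases $k-1=0$ and $k=n-1$. These are covered by the extension of $R$ to $k\in\{0,n\}$, which agrees with $P$ there.
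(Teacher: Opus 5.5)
Your proof is correct. The paper omits this proof as routine, and your strong induction is precisely that routine verification: you check that the closed form matches the boundary values and, via Pascal's rule, that it satisfies the recursive definition of $P$, which reduces to the parity case check on the correction terms. Extending the formula to $k\in\{0,n\}$ is harmless, because $k=0$ is even and $k=n$ has the parity of $n$, so the exceptional branch never arises at the endpoints.
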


\begin{remark}
    The numbers $P(n,k)$ form a number triangle called \emph{Losanitsch's triangle} (see \cref{fig:triangle} and \cite{oeis}) and were first studied in relation to the symmetries of alkanes \cite{Losanitsh}. This is a deformation of Pascal's triangle, with the difference between the two measured by the terms $Q(n,k)$. 
\end{remark}

\begin{figure}
\centering \begin{tikzpicture}[
    x=0.95cm, y=0.85cm,
    every node/.style={inner sep=0pt, font=\small},scale=.7]
  \foreach \row [count=\n from 0] in {
      {1},
      {1,1},
      {1,1,1},
      {1,2,2,1},
      {1,2,4,2,1},
      {1,3,6,6,3,1},
      {1,3,9,10,9,3,1},
      {1,4,12,19,19,12,4,1},
      {1,4,16,28,38,28,16,4,1}}
    \foreach \val [count=\k from 0] in \row
      \node at (\k-\n/2, -\n) {\val};
\end{tikzpicture}
\caption{Losanitsch's triangle, with $i$th entry (starting at $0$) on the $j$th row (starting at $0$) given by $P(j-i,i)$.}
\label{fig:triangle}
\end{figure}

The next lemmas are elementary consequences of Pascal's identity:

\begin{lemma}\label{lemma: P equal Q}
    If $n$ is even and $k$ is odd then $P(n,k) = Q(n,k) = \frac{1}{2}{n\choose k}$.
\end{lemma}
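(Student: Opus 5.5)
The plan is to show $P(n,k)=\frac12\binom nk$ for $n$ even and $k$ odd directly from the recursion in \cref{defn: lozanic}. Since $Q(n,k)=\binom nk-P(n,k)$ by definition, the equality $Q(n,k)=\frac12\binom nk$ then follows immediately. The recursion mixes parities, so I will prove the closed form of \cref{lem:expressions-of-p-and-q} simultaneously by strong induction on $n$. Treating that lemma as given would make the statement trivial, but I prefer to indicate why the special case holds from Pascal's identity.

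The inductive hypothesis is the closed form for all smaller $n$: $P(m,j)=\frac12\bigl(\binom mj+\binom{\lfloor m/2\rfloor}{\lfloor j/2\rfloor}\bigr)$ unless $m$ is even and $j$ is odd, in which case $P(m,j)=\frac12\binom mj$. The base cases $n\le 2$ and $k\in\{0,n\}$ are checked directly. Note that for $m$ odd or $j$ even the correction term $\binom{\lfloor m/2\rfloor}{\lfloor j/2\rfloor}$ is the "symmetric" count; the formula for $m\le 2$ matches $P=1$, since for example $\frac12(2+0)$ is not needed there.

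Now take $n$ even and $k$ odd, with $0<k<n$. Then $n-1$ is odd, so both $P(n-1,k-1)$ and $P(n-1,k)$ fall into the "otherwise" case. Since $\lfloor (n-1)/2\rfloor = n/2-1$, $\lfloor (k-1)/2\rfloor=(k-1)/2$ and $\lfloor k/2\rfloor=(k-1)/2$, the two correction terms are each $\binom{n/2-1}{(k-1)/2}$. Summing and applying Pascal's identity gives
\[
P(n-1,k-1)+P(n-1,k)=\tfrac12\binom nk+\binom{n/2-1}{(k-1)/2},
\]
and the recursion subtracts exactly $\binom{n/2-1}{(k-1)/2}$, leaving $\frac12\binom nk$.

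The remaining parity cases of the induction use Pascal's identity on both the main term and the correction term, where the correction indices combine as $\binom{a}{b-1}+\binom{a}{b}$ or reduce to a single term. The main obstacle is keeping this floor bookkeeping straight in those other cases. The case in the statement itself is the clean computation above.
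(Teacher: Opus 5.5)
Your proposal is correct and essentially matches the paper. The paper offers no separate proof: it calls the lemma an elementary consequence of Pascal's identity, and it is exactly the $n$ even, $k$ odd case of the closed form in \cref{lem:expressions-of-p-and-q}, whose inductive proof the paper omits as routine. Your computation is precisely that inductive step.

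The parity cases you only sketch do close the induction as you describe:
\begin{itemize}
\item For $n$ odd, one term from row $n-1$ has no correction, so a single correction term $\frac12\binom{(n-1)/2}{\lfloor k/2\rfloor}$ survives.
\item For $n$ and $k$ both even, the two corrections combine by Pascal's identity to $\frac12\binom{n/2}{k/2}$.
\end{itemize}
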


\begin{lemma}\label{lemma: P plus Q even n}
    If $n$ is even and $k$ is odd then $$P(n-1,k-1)+Q(n-1,k) = P(n,k)=Q(n,k)= Q(n-1,k-1)+P(n-1,k).$$
\end{lemma}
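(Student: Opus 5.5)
We have to prove: for $n$ even and $k$ odd,
\[
P(n-1,k-1)+Q(n-1,k) = P(n,k)=Q(n,k)= Q(n-1,k-1)+P(n-1,k).
\]
The middle equality $P(n,k)=Q(n,k)$ is exactly \cref{lemma: P equal Q}, so only the two outer equalities need proof. The plan is to compute everything from the closed forms in \cref{lem:expressions-of-p-and-q}, combined with Pascal's identity. The boundary cases $k-1=0$ or $k=n-1$ lie outside the range $0<k<n$ of that lemma. In those cases I will check directly from \cref{defn: lozanic} that $P(m,0)=P(m,m)=1$ and $Q(m,0)=Q(m,m)=0$, which agrees with the closed form because $\binom{m_2}{0}=\binom{m_2}{m_2}=1$. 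So the closed form can be used uniformly for $0\le k\le n$.

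Now fix $n$ even and $k$ odd. Then $n-1$ is odd, so neither $(n-1,k-1)$ nor $(n-1,k)$ falls in the special case of \cref{lem:expressions-of-p-and-q}. We have $\lfloor (n-1)/2\rfloor = n/2-1 =: m$. For the floors of $k-1$ and $k$, note that $k-1$ is even, so $\lfloor (k-1)/2\rfloor = \lfloor k/2\rfloor = (k-1)/2 =: j$. The closed forms give
\[
P(n-1,k-1)=\tfrac12\Big(\tbinom{n-1}{k-1}+\tbinom{m}{j}\Big),\qquad Q(n-1,k)=\tfrac12\Big(\tbinom{n-1}{k}-\tbinom{m}{j}\Big).
\]
Adding these, the $\binom{m}{j}$ terms cancel, and Pascal's identity gives $\frac12\binom{n}{k}$, which equals $P(n,k)$ by \cref{lemma: P equal Q}. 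In exactly the same way, $Q(n-1,k-1)+P(n-1,k)$ also equals $\frac12\binom{n}{k}$, because the correction terms again appear with opposite signs.

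The only step that needs care is the bookkeeping: confirming that the two floor indices agree, so that the correction terms really cancel, and handling the edge cases $k=1$ and $k=n-1$. I expect no genuine obstacle beyond this.
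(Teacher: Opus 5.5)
Your proof is correct and takes essentially the paper's approach. The paper omits the argument, saying only that it is an elementary consequence of Pascal's identity. Your computation is precisely that: you take the closed forms of \cref{lem:expressions-of-p-and-q} on the odd row $n-1$, including the boundary entries $k-1=0$ and $k=n-1$, which you rightly checked separately. The correction terms $\binom{n/2-1}{(k-1)/2}$ then cancel, and Pascal's identity gives $\tfrac12\binom{n}{k}$.
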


\begin{lemma}\label{lemma: P plus Q odd n}
    If $n$ is odd and $k$ is even then $$P(n-1,k-1)+Q(n-1,k) = Q(n,k) \text{ and } Q(n-1,k-1)+P(n-1,k)=P(n,k).$$
\end{lemma}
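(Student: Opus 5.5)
The plan is to reduce both identities to the defining recursion of \cref{defn: lozanic} together with \cref{lemma: P equal Q}, applied in row $n-1$. Throughout I assume $0<k<n$, so that all the quantities $P(n-1,k-1)$, $P(n-1,k)$, $Q(n-1,\cdot)$ are defined. Since $n$ is odd and $k$ is even, this forces $k\geq 2$, $k\le n-1$ and $n\geq 3$.

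\textbf{Step 1 (the recursion in row $n$ has no correction).} Since $n\geq 3$ and $0<k<n$, we are not in the base cases of \cref{defn: lozanic}. Since $n$ is odd, the correction term, which only occurs when $n$ is even and $k$ is odd, is absent. Hence
\[
P(n,k)=P(n-1,k-1)+P(n-1,k).
\]
Subtracting this from Pascal's identity ${n\choose k}={n-1\choose k-1}+{n-1\choose k}$ and using $Q={n\choose k}-P$ in both rows gives
\[
Q(n,k)=Q(n-1,k-1)+Q(n-1,k).
\]

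\textbf{Step 2 (the symmetry in row $n-1$).} Now $n-1$ is even and $k-1$ is odd, with $0<k-1<n-1$. So \cref{lemma: P equal Q} applies and gives
\[
P(n-1,k-1)=Q(n-1,k-1)=\tfrac12{n-1\choose k-1}.
\]

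\textbf{Step 3 (combine).} To get the first identity, replace $P(n-1,k-1)$ by $Q(n-1,k-1)$ in $P(n-1,k-1)+Q(n-1,k)$. The result is $Q(n-1,k-1)+Q(n-1,k)$, which equals $Q(n,k)$ by Step 1. For the second identity, replace $Q(n-1,k-1)$ by $P(n-1,k-1)$ in $Q(n-1,k-1)+P(n-1,k)$. The result is $P(n-1,k-1)+P(n-1,k)$, which equals $P(n,k)$ by Step 1.

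The only real obstacle is bookkeeping of the ranges. One must check that the recursion of \cref{defn: lozanic} is genuinely the one being used, i.e.\ that we are outside the base cases $n\le 2$ and $k\in\{0,n\}$. One must also check that \cref{lemma: P equal Q} is applied to a pair $(n-1,k-1)$ in its intended range $0<k-1<n-1$. Both checks follow from $0<k<n$ together with the parities.
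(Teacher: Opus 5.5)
Your proof is correct and is essentially the routine verification the paper has in mind; the paper states these lemmas as elementary consequences of Pascal's identity without writing out a proof. In the odd row $n$ the recursion has no correction term, which with Pascal's identity gives additive recursions for both $P$ and $Q$, and \cref{lemma: P equal Q} in the even row $n-1$ lets you swap $P(n-1,k-1)$ and $Q(n-1,k-1)$; your standing assumption $0<k<n$ covers every instance in which the lemma is used in the proof of \cref{proposition: M in terms of P and Q}.
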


We now begin working towards \cref{thm: products}. We will only give a proof for the case where $a+b$ is even; the case where $n=a+b$ is odd is largely the same but easier (see \cref{exam:products-odd}).  We thus fix an \emph{even} integer $n\geq 2$ and we will always have $1\leq a\leq n-1$, $b=n-a$, and $1\leq i\leq a$. The first step in the proof of \cref{thm: products} is to decompose the product $\{1\}_a\{1\}_b$ as a sum of simpler matrices $M(n,k,i)$ and $N(n,k,i)$, which we define now.

\begin{notation}For $1 \leq a \leq n-1$ we write $j_{a+1} = e_a+e_{a+1}$.\end{notation}

\begin{definition}\,
\begin{enumerate}[\noindent (i)]
    \item For $1\leq i\leq a< n$ we write $M(n,a,i)$ for the class of the matrix \[[e_1,j_2,\compactldots,j_a,e_{i}+e_{a+1},j_{a+2},j_{a+3},\compactldots,j_{n}].\]  That is, $M(n,a,i)$ is the same as $\{1\}_n$ except that the $(a+1)$th column is $e_{i}+e_{a+1}$ instead of $j_{a+1} = e_{a}+e_{a+1}$. 
    \item  For $1\leq i\leq a< n$ we write $N(n,a,i)$ for the same matrix as $M(n,a,i)$ except that the last column is $e_{n-1}-e_n$ instead of $j_n=e_{n-1}+e_{n}$.  That is, it is obtained from $M(n,a,i)$ by replacing the $1$ in the bottom right of the matrix by $-1$.
\end{enumerate}
\end{definition}

\begin{example}
    \[
        M(4,2,1) = \begin{bmatrix}
            1 & 1 & 1 & 0\\
            0 & 1 & 0 & 0\\
            0 & 0 & 1 & 1\\
            0 & 0 & 0 & 1\\
        \end{bmatrix}
        \quad \mathrm{and}\quad
        N(4,3,2) = \begin{bmatrix}
            1 & 1 & 0 & 0\\
            0 & 1 & 1 & 1\\
            0 & 0 & 1 & 0\\
            0 & 0 & 0 & -1\\
        \end{bmatrix}.
    \]
\end{example}

When $n$ is fixed and clear from context we will omit it from the notation and write $M(a,i) = M(n,a,i)$, and similarly for $N(a,i)$. By definition we have $M(a,a) = \{1\}_n$.  By scaling the last column by $-1$ we get $N(a,a) = \{-1\}_n$.  The product $\{1\}_a\{1\}_b$ can be expressed as a sum of $M(a,i)$ and $N(a,i)$:

\begin{proposition}\label{proposition: products in terms of M and N}
    We have \[\{1\}_a\cdot\{1\}_b=\begin{cases}
        M(a,1)+M(a+1,1) & \text{if $a$ is even,}\\
        M(a,1)+N(a+1,1) & \text{if $a$ is odd.}
    \end{cases}\]
\end{proposition}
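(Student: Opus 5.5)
The plan is to start from the definition of the product and apply a single column addition relation. By definition of the block sum, $\{1\}_a\cdot\{1\}_b$ is the class of the matrix $[e_1,j_2,\compactldots,j_a,e_{a+1},j_{a+2},\compactldots,j_n]$. It agrees with $\{1\}_n$ except that the $(a+1)$th column is $e_{a+1}$ instead of $j_{a+1}$. I will apply the column addition relation \cref{lem:an-relations}\eqref{enum:an-relations-i} (equivalently relation \eqref{enum:steinberg-1} of \cref{def:steinberg} in the second tensor factor) to the pair of columns $v_1=e_1$ and $v_{a+1}=e_{a+1}$ with $\lambda=1$. This gives
\[
\{1\}_a\cdot\{1\}_b=\langle e_1+e_{a+1},j_2,\compactldots,j_a,e_{a+1},j_{a+2},\compactldots,j_n\rangle+\langle e_1,j_2,\compactldots,j_a,e_1+e_{a+1},j_{a+2},\compactldots,j_n\rangle .
\]
The second summand is by definition $M(a,1)$. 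So it remains to identify the first summand, which I denote $X$, with $M(a+1,1)$ when $a$ is even and with $N(a+1,1)$ when $a$ is odd.

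To identify $X$, I will change basis so that the "$e_{a+1}$-part" becomes the start of the chain. The columns of $X$ span the same flag-type configuration as those of $M(a+1,1)$ after relabelling the basis vectors $e_1,\compactldots,e_{a+1}$. Concretely, I will look for a permutation $\tau$ of $\{1,\compactldots,a+1\}$ and a sign vector such that two operations carry $X$ to the matrix of $M(a+1,1)$. The first is left multiplication by a monomial matrix $P$ (permuting rows $1,\compactldots,a+1$). The second is right multiplication by a monomial matrix (permuting and rescaling columns, using column scaling \eqref{enum:steinberg-2} and permutation \eqref{enum:steinberg-3}), possibly combined with a few further column additions with $\lambda=\pm1$ that only rewrite columns inside a span. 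The natural candidate is the order-reversing permutation of the first $a+1$ indices. It sends the chain $e_1, e_1+e_2,\compactldots,e_{a-1}+e_a$ together with $e_{a+1}$ and $e_1+e_{a+1}$ to a chain starting at the new $e_1$, after recombining columns.

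The key bookkeeping concerns the row operation. Only monomial matrices in $\SL_n$ are allowed (\cref{lem:an-relations}\eqref{enum:an-relations-iv} and \cref{rem:row-scaling}). So if $\det P=-1$, I must compensate by scaling some row by $-1$, and I will choose the last row. After restoring the other columns by column scaling, the only surviving effect of this compensation is that the bottom-right entry becomes $-1$, which turns $M(a+1,1)$ into $N(a+1,1)$. The sign $(-1)^\sigma$ from \cref{lem:an-relations}\eqref{enum:an-relations-iv}, together with the sign from the column permutation \eqref{enum:an-relations-ii}, should cancel, or be absorbed by column scalings. The parity of the reversal on $a+1$ letters, $(-1)^{a(a+1)/2}$, combined with the sign changes needed to turn differences $e_k-e_{k+1}$ back into sums $j_{k+1}$ (one per column, so roughly $a$ of them), should net out to $(-1)^a$. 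That would explain why $a$ even gives $M$ and $a$ odd gives $N$.

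The main obstacle is this sign and parity bookkeeping: choosing the relabelling so that every column lands exactly on the pattern of $M(a+1,1)$, and verifying that the determinant correction forced by staying in $\SL_n$ falls on the last row precisely when $a$ is odd. I would first check the cases $n=4$ with $a=1,2,3$ by hand to pin down the correct permutation and the column sign changes, and then write the general reversal argument.
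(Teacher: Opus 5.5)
Your first step is the paper's own: column addition on columns $1$ and $a+1$ with $\lambda=1$ gives $\{1\}_a\cdot\{1\}_b=M(a,1)+X$, where $X=\langle e_1+e_{a+1},j_2,\ldots,j_a,e_{a+1},j_{a+2},\ldots,j_n\rangle$. The gap is that you never identify $X$, which is the rest of the proof, and the route you sketch would not work.

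\begin{itemize}
\item The order-reversing permutation of $\{1,\ldots,a+1\}$ sends the chain $e_{a+1},\,e_1+e_{a+1},\,e_1+e_2,\ldots,e_{a-1}+e_a$ to $e_1,\,e_1+e_{a+1},\,e_a+e_{a+1},\ldots,e_2+e_3$. This is not the superdiagonal pattern $e_1,j_2,\ldots,j_{a+1}$ of $M(a+1,1)$.
\item You cannot repair this with ``column additions that only rewrite columns inside a span''. In $\St_n$ a column addition is a three-term relation, so it would create new summands rather than rewrite a symbol.
\item Your parity count is not a derivation. Already for $a=1$, $(-1)^{a(a+1)/2}(-1)^a=+1$, not $(-1)^a=-1$.
\end{itemize}

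The missing idea is the right relabelling: the $(a+1)$-cycle $e_{a+1}\mapsto e_1$, $e_k\mapsto e_{k+1}$ for $k\le a$, applied to both columns and rows.

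\begin{enumerate}
\item Move column $a+1$ to the front. This is a cyclic permutation of columns $1,\ldots,a+1$ with sign $(-1)^a$, so $X=(-1)^a\langle e_{a+1},e_1+e_{a+1},j_2,\ldots,j_a,j_{a+2},\ldots,j_n\rangle$.
\item Left multiply by the permutation matrix of the same cycle on rows $1,\ldots,a+1$. It has determinant $(-1)^a$, so to stay in $\SL_n$, also multiply the last row by $(-1)^a$.
\item By \cref{lem:an-relations}\eqref{enum:an-relations-iv} this contributes another factor $(-1)^a$, which cancels the one from the column permutation.
\item The columns are now $e_1,\,j_2,\ldots,j_{a+1},\,e_1+e_{a+2},\,j_{a+3},\ldots$. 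This is exactly the matrix of $M(a+1,1)$, except that the bottom-right entry has become $(-1)^a$.
\end{enumerate}

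So $X=M(a+1,1)$ for $a$ even and $X=N(a+1,1)$ for $a$ odd. No further column additions or sign corrections are needed, and the bookkeeping you flagged as the main obstacle takes two lines once the cyclic permutation is used.
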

\begin{proof}
    By definition, writing $a+b=n$, we have
    \[
        \{1\}_a\cdot\{1\}_b = [e_1,j_2,\compactldots,j_a,e_{a+1},j_{a+2},\compactldots,j_n].
    \] We start by adding the first column to the $(a+1)$-st column which gives
    \[
        \{1\}_a\cdot\{1\}_b =M(a,1)+[e_{1}+e_{a+1},j_2,\compactldots,j_a,e_{a+1},j_{a+2},\compactldots,j_n]
    \]
    using  \cref{lem:an-relations} \eqref{enum:an-relations-i}. It remains to show that the second term is equal to $M(a+1,1)$ if $a$ is even and equal to $N(a+1,1)$ if $a$ is odd.  We simplify this term by first permuting the columns according to the cycle $(1,2,\dots,a+1)$ which gives
    \[
        [e_{1}+e_{a+1},j_2,\compactldots,j_a,e_{a+1},j_{a+2},\compactldots,j_n] = (-1)^a[e_{a+1},e_1+e_{a+1},j_2,\compactldots,j_{a},j_{a+2},\compactldots,j_n].
    \]
    Next, we cycle the rows according to the same cycle, at the cost of multiplying on the outside, and in the last row, by $(-1)^a$ (see \cref{rem:row-scaling}). This gives
    \[
        [e_{1}+e_{a+1},j_2,\compactldots,j_a,e_{a+1},j_{a+2},\compactldots,j_n] = [e_1,j_2,\compactldots,j_{a+1},e_{1}+e_{a+2} ,\compactldots,j_{n-1},e_{n-1}+(-1)^ae_n],
    \]
    implying the result.
\end{proof}

The plan for computing the product $\{1\}_a\cdot\{1\}_b$ is to show that the terms $M(a,i)$ and $N(a,i)$ satisfy a recursion formula which allows us to compute them in terms of $\{1\}_{a+b}$ and $\{-1\}_{a+b}$. We will establish several lemmas making this recursion precise; their proofs are similar to that of \cref{proposition: products in terms of M and N}.

\begin{lemma}\label{lemma: M and N reduction in case a=n-1}
    For $n$ even and any $i<n-1$ there are equalities
    \[
        M(n-1,i) = N(n-1,i+1)+\{1\}_n \quad \text{and} \quad 
        N(n-1,i) = M(n-1,i+1)+\{-1\}_n.
    \]
\end{lemma}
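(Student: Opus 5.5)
The plan is to prove the first identity by one application of the column addition relation of \cref{lem:an-relations} \eqref{enum:an-relations-i}, followed by column and row permutations as in the proof of \cref{proposition: products in terms of M and N}. The second identity should follow from the same computation with $e_n$ replaced by $-e_n$ in the last column throughout. With $a=n-1$, the class $M(n-1,i)$ is represented by
\[
[e_1,j_2,\compactldots,j_{n-1},e_i+e_n],
\]
since the "$(a+1)$th column'' is the last one. The two targets are $\{1\}_n$, with last column $j_n=e_{n-1}+e_n$, and $N(n-1,i+1)$, with last column $e_{i+1}-e_n$.

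\textbf{Step 1 (split).} Write $v=e_i+e_n$ for the last column and $w=j_{i+1}=e_i+e_{i+1}$ for column $i+1$. This column exists and has this form because $i<n-1$. Apply column addition with $\lambda=-1$ to this pair of columns. This writes $M(n-1,i)$ as a sum of two classes:
\begin{itemize}
\item the first class has last column $v-w=e_n-e_{i+1}$; rescaling that column by $-1$, which is allowed by \cref{lem:an-relations} \eqref{enum:an-relations-ii}, gives exactly $N(n-1,i+1)$;
\item the second class has column $i+1$ replaced by $w-v=e_{i+1}-e_n$ and last column still $e_i+e_n$.
\end{itemize}

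\textbf{Step 2 (identify the remainder).} It remains to show that the second class equals $\{1\}_n$. Its columns are
\[
e_1,\, j_2,\compactldots,j_i,\; e_{i+1}-e_n,\; j_{i+2},\compactldots,j_{n-1},\; e_i+e_n .
\]
I would first move the last column into position $i+1$ by a cyclic column permutation of positions $i+1,\dots,n$. I would then cycle the rows $i+1,\dots,n$ so that $e_n$ becomes $e_{i+1}$ and each later $e_k$ becomes $e_{k+1}$. Since a cyclic row permutation is not in $\SL_n$, this has to be corrected by a $-1$ on a row, as in \cref{rem:row-scaling}. After these moves the columns read
\[
e_1,\, j_2,\compactldots,j_i,\; j_{i+1},\; \pm e_{i+2}\pm e_{i+1},\; j_{i+3},\compactldots,j_n
\]
up to signs. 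The remaining signs are removed by scaling individual columns, and by scaling pairs of rows via \cref{lem:an-relations} \eqref{enum:an-relations-iv}. This should yield $\pm\{1\}_n$, and the claim is that the sign comes out as $+\{1\}_n$.

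\textbf{Main obstacle.} The hard part is the sign bookkeeping in Step 2. Both cycles have length $n-i$, so the column and row signs $(-1)^{n-i-1}$ cancel against each other. The extra $-1$ needed to make the row permutation lie in $\SL_n$ is put on the last row, and it then has to be pushed through the superdiagonal columns by alternately scaling columns and pairs of rows. Here the hypothesis that $n$ is even should make the final sign on the last column $+1$ rather than $-1$. This is precisely what distinguishes producing $\{1\}_n$ from producing $\{-1\}_n$, and it is the step I would check most carefully. For the $N$-version, the same computation starts from last column $e_i-e_n$ and produces $M(n-1,i+1)$ together with a remainder carrying one extra $-1$ in the bottom-right entry, i.e.\ $\{-1\}_n$.
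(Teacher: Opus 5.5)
Your route is the paper's. The split in Step 1 (column $i+1$ minus the last column, with the last-column term rescaled to $N(n-1,i+1)$) is correct. So is your observation that the two $(n-i)$-cycles on columns and rows have cancelling signs. The gap is Step 2, which carries the whole content of the lemma: deciding between $\{1\}_n$ and $\{-1\}_n$. You leave this as "should", and the reason you suggest is misplaced. The evenness of $n$ is never used, and it cannot matter, since for odd $n$ one has $\{1\}_n=\{-1\}_n$ by \cref{exam:products-odd}. Also, once the outer permutation signs have cancelled, column scalings and determinant-one row scalings introduce no new outer sign. So the two alternatives are $\{\pm1\}_n$, not $\pm\{1\}_n$.

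To close the gap, put the row correction $(-1)^{n-i-1}$ on the last row. The remainder then becomes
\[
[e_1,j_2,\compactldots,j_{i+1},\,e_{i+2}-e_{i+1},\,j_{i+3},\compactldots,j_{n-1},\,e_{n-1}+(-1)^{n-i-1}e_n].
\]
After scaling the last column by $(-1)^{n-i-1}$, this is $\mathrm{sdiag}(a_1,\compactldots,a_{n-1})$ with $a_{i+1}=-1$, $a_{n-1}=(-1)^{n-i-1}$, and all other $a_k=1$ (when $i<n-2$). By \cref{prop: superdiagonals} its class is $\{(-1)^{n-(i+1)}\cdot(-1)^{n-i-1}\}_n=\{1\}_n$. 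The superdiagonal $-1$ contributes exactly the sign that the row correction placed on the last column, and this holds for every $n$ and $i$. Pushing signs through by hand, as you planned, just reproves this instance of \cref{prop: superdiagonals}. For $i=n-2$ both signs land in the last column: it becomes $e_n-e_{n-1}$ after the row swap and $-(e_{n-1}+e_n)$ after the correction, which gives $\{1\}_n$ directly.

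Your treatment of the second identity is fine and can be made precise. Left multiplication by $\mathrm{diag}(1,\compactldots,1,-1)$ is the residual action of $-1\in F^\times$. It sends $M(n-1,i)$, $N(n-1,i+1)$, $\{1\}_n$ to $N(n-1,i)$, $M(n-1,i+1)$, $\{-1\}_n$ respectively (\cref{lem:f-times-action}), so the second identity is the image of the first.
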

\begin{proof}
    We give the proof for $M(n-1,i)$, as that for $N(n-1,i)$ is essentially the same.  By definition we have
    \[
        M(n-1,i) = [e_1,j_2,\compactldots,j_{n-1},e_i+e_n]
    \]
    and we apply the column addition rule, subtracting the $n$th column from the $(i+1)$th column giving
    \begin{align*}
        M(n-1,i) &  = [e_1,j_2,\compactldots,e_{i+1}-e_n]+[e_1,\compactldots,j_{i},e_{i+1}-e_n,j_{i+2},\compactldots,e_i+e_n]\\
        &  = N(n-1,i+1)+[e_1,\compactldots,j_{i},e_{i+1}-e_n,j_{i+2},\compactldots,e_i+e_n].
    \end{align*}
    It remains to check that the second term is equal to $\{1\}_n$.

    To do so, cycle the columns of the second term by the permutation $(i+1,\dots, n)$ to get
    \[
        [e_1,\compactldots,j_{i},e_{i+1}-e_n,j_{i+2},\compactldots,e_i+e_n] = (-1)^{n-i-1} [e_1,\dots,j_{i},e_i+e_n,e_{i+1}-e_n,j_{i+2},\compactldots,j_{n-1}].
    \]
    We then cycle the rows of the matrix on the right hand side according to $(i+1,\dots,n)$, giving 
    \[
         (-1)^{n-i-1} [e_1,\dots,j_{i},e_i+e_n,e_{i+1}-e_n,j_{i+2},\compactldots,j_{n-1}]  =  [e_1,\compactldots,j_{i},j_{i+1},e_{i+2}-e_{i+1},j_{i+3},\compactldots,j_{n-1},v_n]
    \]
    where $v_n = e_{n-1}+(-1)^{n-i-1}e_n$. After scaling the last column by $(-1)^{n-i-1}$ this matrix is given by $\mathrm{sdiag}(1,1,\compactldots,1,-1,1,\compactldots,(-1)^{n-i-1})$ where the first $-1$ appears in the $(i+1)$th spot.  Note that here we have implicitly assumed that $i<n-2$ to preclude the possibility that the two negative ones appear in the same column. The result remains true if $i = n-2$,  but the argument needs to be modified slightly; we leave the details to the reader.  By \cref{prop: superdiagonals}, this result is equal to $\{(-1)^{n-i-1}\cdot(-1)^{n-i-1}\}_n = \{1\}_n$.
\end{proof}

The following proposition is now a straightforward induction argument.
\begin{proposition}\label{prop: the case a is n minus 1}
    For $n$ even and $i\leq n-1$, we have
    \begin{align*}
        M(n-1,i) &= \begin{cases}
                    \left(\frac{n+1-i}{2}\right)\{1\}_n+\left(\frac{n-1-i}{2}\right)\{-1\}_n & \mathrm{for }\ n-i\ \mathrm{odd},\\
                    \left(\frac{n-i}{2}\right)(\{1\}_n+\{-1\}_n)& \mathrm{for }\ n-i\ \mathrm{even},
                    \end{cases} \\
    N(n-1,i) &= \begin{cases}
                    \left(\frac{n-1-i}{2}\right)\{1\}_n+\left(\frac{n+1-i}{2}\right)\{-1\}_n & \mathrm{for }\ n-i\ \mathrm{odd},\\
                    \left(\frac{n-i}{2}\right)(\{1\}_n+\{-1\}_n)& \mathrm{for }\ n-i\ \mathrm{even}.
                    \end{cases}
    \end{align*}
\end{proposition}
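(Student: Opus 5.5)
We argue by downward induction on $i$, starting from $i = n-1$ and using \cref{lemma: M and N reduction in case a=n-1} as the inductive step.

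\emph{Base case $i=n-1$.} Here $n-i=1$ is odd. By definition $M(n-1,n-1) = M(a,a) = \{1\}_n$ and $N(n-1,n-1) = N(a,a) = \{-1\}_n$. This agrees with the claimed formulas, whose coefficients are $\tfrac{n+1-i}{2} = 1$ and $\tfrac{n-1-i}{2} = 0$.

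\emph{Inductive step.} Assume the formulas hold for $i+1$, where $i<n-1$. By \cref{lemma: M and N reduction in case a=n-1},
\[
M(n-1,i) = N(n-1,i+1) + \{1\}_n, \qquad N(n-1,i) = M(n-1,i+1) + \{-1\}_n .
\]
Note that $M$ at $i$ is expressed through $N$ at $i+1$, and vice versa. So the induction must prove both formulas simultaneously, and the parity of $n-i$ flips at each step. There are two cases.

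If $n-i$ is even, then $n-(i+1)$ is odd. The inductive hypothesis gives $N(n-1,i+1) = \tfrac{n-2-i}{2}\{1\}_n + \tfrac{n-i}{2}\{-1\}_n$. Adding $\{1\}_n$ yields $\tfrac{n-i}{2}\bigl(\{1\}_n + \{-1\}_n\bigr)$, as claimed. The computation for $N(n-1,i)$ is the same with the roles of $\{1\}_n$ and $\{-1\}_n$ exchanged.

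If $n-i$ is odd, then $n-(i+1)$ is even. The inductive hypothesis gives $N(n-1,i+1) = \tfrac{n-1-i}{2}\bigl(\{1\}_n + \{-1\}_n\bigr)$. Adding $\{1\}_n$ yields $\tfrac{n+1-i}{2}\{1\}_n + \tfrac{n-1-i}{2}\{-1\}_n$, which is the claimed formula. Again the computation for $N(n-1,i)$ is symmetric.

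There is no real obstacle beyond this bookkeeping. The only point needing care is the edge case $i = n-2$ of \cref{lemma: M and N reduction in case a=n-1}, which we are taking as given from that lemma.
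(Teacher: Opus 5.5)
Your proof is correct and is essentially the ``straightforward induction'' the paper has in mind. You run a downward induction on $i$ from the base case $M(n-1,n-1)=\{1\}_n$, $N(n-1,n-1)=\{-1\}_n$, proving the $M$ and $N$ formulas simultaneously via \cref{lemma: M and N reduction in case a=n-1}, and your coefficient bookkeeping in both parity cases is right.
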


\begin{lemma}
    For $1\leq i\leq a< n$, we let $$L(n,a,i) = L(a,i) \coloneq [e_1,j_2,\compactldots,j_{a},-e_i+e_{a+1},j_{a+2},\compactldots,j_n].$$ That is, $L(a,i)$ is like $M(a,i)$  but with $-1$ instead of $1$ in the exceptional position $(i,a+1)$. Then $L(a,i) = M(a,i)$ for $n-a$ even and $L(a,i)=N(a,i)$ for $n-a$ odd. 
\end{lemma}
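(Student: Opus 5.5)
The plan is to turn $L(a,i)$ into $M(a,i)$ or $N(a,i)$ by one application of the row scaling relation of \cref{lem:an-relations} \eqref{enum:an-relations-iv}, with a diagonal matrix of signs, and then restore the columns with the column scaling relation \eqref{enum:an-relations-ii}. Both operations involve only diagonal matrices, so the associated permutation is trivial and no sign appears on the outside. The only constraint, as in \cref{rem:row-scaling}, is that the row scaling matrix must lie in $\SL_n$, i.e.\ it must have an even number of entries equal to $-1$. The parity of $n-a$ decides whether this forces us to leave the last row alone, and that is exactly the difference between $M$ and $N$.

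\emph{Case $n-a$ even.} Let $D=\mathrm{diag}(1,\compactldots,1,-1,\compactldots,-1)$ with $-1$ in rows $a+1,\compactldots,n$. This is $n-a$ entries, so $D\in\SL_n$ and $\langle L(a,i)\rangle=\langle D\,L(a,i)\rangle$. We then check the columns of $D\,L(a,i)$.
\begin{itemize}
\item The columns $e_1,j_2,\compactldots,j_a$ only involve $e_1,\compactldots,e_a$, so they are unchanged.
\item Column $a+1$ becomes $-e_i-e_{a+1}$, since $i\le a$.
\item Each column $j_k$ with $k\ge a+2$ becomes $-j_k$, since both $k-1$ and $k$ are at least $a+1$.
\end{itemize}
Scaling columns $a+1,\compactldots,n$ by $-1$ then gives $[e_1,j_2,\compactldots,j_a,e_i+e_{a+1},j_{a+2},\compactldots,j_n]=M(a,i)$.

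\emph{Case $n-a$ odd.} If $a=n-1$, then $L(n-1,i)=[e_1,j_2,\compactldots,j_{n-1},-e_i+e_n]$. Scaling the last column by $-1$ gives $e_i-e_n$, which is exactly $N(n-1,i)$ (replace the bottom-right $1$ of $M(n-1,i)$ by $-1$).

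If $a\le n-2$, use instead $D'=\mathrm{diag}(1,\compactldots,1,-1,\compactldots,-1,1)$ with $-1$ in rows $a+1,\compactldots,n-1$. This is $n-a-1$ entries, an even number, so $D'\in\SL_n$. The columns of $D'\,L(a,i)$ are as follows.
\begin{itemize}
\item Columns $1,\compactldots,a$ are unchanged.
\item Column $a+1$ becomes $-e_i-e_{a+1}$, which is legitimate because $a+1\le n-1$.
\item Columns $a+2,\compactldots,n-1$ become $-j_k$.
\item The last column $j_n$ becomes $-e_{n-1}+e_n$.
\end{itemize}
Scaling columns $a+1,\compactldots,n$ by $-1$ yields $[e_1,\compactldots,j_a,e_i+e_{a+1},j_{a+2},\compactldots,j_{n-1},e_{n-1}-e_n]=N(a,i)$.

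There is no real obstacle here. The only points needing care are the parity bookkeeping that keeps the row scaling in $\SL_n$, and the boundary case $a=n-1$, where the set of rows to flip is empty and the direct column scaling argument above replaces it.
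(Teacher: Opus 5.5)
Your proof is correct and is essentially the paper's argument: flip the signs of rows $a+1,\dots,n$ when $n-a$ is even, or of rows $a+1,\dots,n-1$ when $n-a$ is odd (an even number of rows in both cases, so the scaling matrix lies in $\SL_n$), and then scale the last $n-a$ columns by $-1$. Your separate treatment of the boundary case $a=n-1$ is a welcome clarification. There the row step is vacuous, and the paper's displayed intermediate matrix in the odd case implicitly assumes $a\le n-2$.
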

\begin{proof}
    The proof follows from the fact that we can scale rows by $-1$ but only if we do so two at a time.  If $n-a$ is even then by scaling the last $n-a$ rows we have
    \[
     L(a,i) = [e_1,\compactldots,j_a,-e_i-e_{a+1},-j_{a+2},\compactldots,-j_n]
    \]
    and we then scale the last $n-a$ columns by $-1$ to obtain $L(a,i)=M(a,i)$.  If $n-a$ is odd we scale the rows $a+1$ through $n-1$ by $-1$ and obtain
    \[
        L(a,i) = [e_1,\compactldots,j_a,-e_i-e_{a+1},-j_{a+2},\compactldots,-e_{n-1}+e_n]
    \]
    and we scale the last $n-a$ columns by $-1$ to obtain $L(a,i) = N(a,i)$.
\end{proof}

We need one final lemma.
\begin{lemma}\label{lemma: M and N reduction}
    For $1<i<a<n-1$ and $n$ even, we have
    \begin{align*}
        M(a,i) &= \begin{cases}
            M(a,i+1)+M(a+1,i+1) & \text{for $a$ even}, \\
            N(a,i+1)+M(a+1,i+1) & \text{for $a$ odd},
        \end{cases} \\   
        N(a,i) &= \begin{cases}
            N(a,i+1)+N(a+1,i+1) & \text{for $a$ even,} \\
            M(a,i+1)+N(a+1,i+1) & \text{for $a$ odd.}
        \end{cases}
    \end{align*}
\end{lemma}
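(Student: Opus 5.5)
The plan is to mimic the proofs of \cref{proposition: products in terms of M and N} and \cref{lemma: M and N reduction in case a=n-1}: apply one column addition and then identify the resulting second term by cycling columns and rows. I will treat $M(a,i)$ in detail; the case of $N(a,i)$ is identical because the last column is never touched except through the sign bookkeeping of \cref{rem:row-scaling}.

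\textbf{Step 1: column addition.} Write
\[
M(a,i)=[e_1,j_2,\compactldots,j_a,e_i+e_{a+1},j_{a+2},\compactldots,j_n].
\]
The $(i+1)$th column is $j_{i+1}=e_i+e_{i+1}$, and $i+1\le a$. Apply \cref{lem:an-relations} \eqref{enum:an-relations-i} to the columns $i+1$ and $a+1$, subtracting column $a+1$ from column $i+1$. This yields two terms.
\begin{itemize}
\item In the first term, column $i+1$ is replaced by $e_{i+1}-e_{a+1}$. I expect this to give $L(a,i+1)$, possibly after the usual rearrangement.
\item In the second term, column $a+1$ is replaced by $e_{i+1}-e_{a+1}$ (up to scaling by $-1$, which is allowed by column scaling), while column $i+1$ remains $j_{i+1}$.
\end{itemize}
I will choose the direction of the column addition, i.e.\ which column gets modified, so that one term is literally of the shape $L(a,i+1)$. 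By the preceding lemma, $L(a,i+1)$ equals $M(a,i+1)$ or $N(a,i+1)$ according to the parity of $n-a$. Since $n$ is even, this parity is that of $a$, which matches the case split in the statement.

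\textbf{Step 2: identify the other term.} I aim to show the other term equals $M(a+1,i+1)$; for the $N$ case it should equal $N(a+1,i+1)$. The method follows the proof of \cref{proposition: products in terms of M and N}.
\begin{itemize}
\item Cyclically permute columns $i+1,\dots,a+1$ so that the vector $e_i\pm e_{a+1}$ (or $e_{i+1}\pm e_{a+1}$) moves into position $i+1$ and the chain $j_{i+1},\dots,j_a$ shifts right by one.
\item Apply the same cycle to the rows $i+1,\dots,a+1$.
\end{itemize}
The result should be the superdiagonal chain $e_1,j_2,\dots,j_{a+1}$, with the exceptional column now in position $a+2$ and of the form $e_{i+1}+e_{a+2}$, followed by $j_{a+3},\dots,j_n$. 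This is exactly $M(a+1,i+1)$.

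The signs need care. The column cycle contributes $(-1)^{a-i}$. The row cycle, by \cref{rem:row-scaling}, contributes the same sign outside together with a scaling of one other row by $(-1)^{a-i}$. I will take that other row to be the last row, which only toggles the last column between $j_n$ and $e_{n-1}-e_n$, i.e.\ between $M$ and $N$. Any remaining $-1$ inside the chain or the exceptional column will be cleared by simultaneously scaling a block of rows and the corresponding columns by $-1$, as in the proof of the $L$-lemma. The hypothesis $a<n-1$ guarantees that column $a+2$ exists and the recursion stays in range; $i>1$ keeps the exceptional vector away from the first column.

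\textbf{Main obstacle.} The hard part is the sign bookkeeping in Step 2. The overall signs $(-1)^{a-i}$ must cancel, and the net effect on the last row must be trivial in the $M(a+1,i+1)$ term, so that $M$ is not flipped to $N$. The parity dependence must then come entirely from the $L(a,i+1)$ term. If this fails with my first choice, I will instead subtract column $i+1$ from column $a+1$, i.e.\ modify the exceptional column rather than the chain column. This changes which term carries the $L$-type sign and which carries the row-cycle sign. I will check the result against small cases such as $n=4$, $a=2$, $i=1$ (admissible after relaxing $1<i$) before fixing the convention.
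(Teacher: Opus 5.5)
Your strategy is the paper's: one column addition between columns $i+1$ and $a+1$, one summand recognised as $L(a,i+1)$, and the other identified with $M(a+1,i+1)$ by cycling columns and rows $i+1,\dots,a+1$ and then clearing signs. However, Step~1 as written has the two summands the wrong way round, so Step~2 is aimed at the wrong summand and would fail there.

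Since $j_{i+1}-(e_i+e_{a+1})=e_{i+1}-e_{a+1}$, the summand in which \emph{column $a+1$} becomes $e_{i+1}-e_{a+1}$ (with column $i+1$ still $j_{i+1}$) is the $L$-summand. After scaling column $a+1$ by $-1$ it is literally $L(a,i+1)$. The summand in which \emph{column $i+1$} becomes $e_{i+1}-e_{a+1}$ is
\[
R=[e_1,j_2,\dots,j_i,e_{i+1}-e_{a+1},j_{i+2},\dots,j_a,e_i+e_{a+1},j_{a+2},\dots,j_n],
\]
and this is the one to cycle. The column cycle moves $e_i+e_{a+1}$ to position $i+1$ and shifts $e_{i+1}-e_{a+1},j_{i+2},\dots,j_a$ one step to the right. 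The row cycle ($e_{a+1}\mapsto e_{i+1}$, $e_k\mapsto e_{k+1}$) then turns these columns into $j_{i+1},e_{i+2}-e_{i+1},j_{i+3},\dots,j_{a+1}$, and turns $j_{a+2}$ into $e_{i+1}+e_{a+2}$.

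If you instead cycle the summand where column $i+1$ is still $j_{i+1}$ (your ``chain $j_{i+1},\dots,j_a$ shifts right''), the row cycle sends $j_{i+1}$ to $e_i+e_{i+2}$ in column $i+2$ and puts $e_{i+2}-e_{i+1}$ in column $i+1$. That matrix is not upper triangular, which is expected, since that summand is $L(a,i+1)$ and not $M(a+1,i+1)$. Your fallback of reversing the column addition does not help. By \cref{lem:an-relations}~\eqref{enum:an-relations-i} with $\lambda=\lambda^{-1}=-1$, both directions produce the same two summands up to scaling one column by $-1$.

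With the labels swapped, your sign plan works exactly as you predicted. The column and row cycles contribute cancelling signs $(-1)^{a-i}$. What remains is a $-1$ at position $(i+1,i+2)$ and the last row multiplied by $(-1)^{a-i}$. Now scale rows $i+2,\dots,a+1$ by $-1$. These are $a-i$ rows, so to stay in $\SL_n$ you also scale the last row by $(-1)^{a-i}$, which cancels the earlier sign. Then scaling columns $i+2,\dots,a+1$ by $-1$ yields $M(a+1,i+1)$. Here $a<n-1$ ensures that row $n$ is not among rows $i+2,\dots,a+1$.

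So all parity dependence comes from the $L(a,i+1)$ summand, via the preceding lemma and $n-a\equiv a \pmod 2$, and the $N(a,i)$ case is identical. The hypothesis $i>1$ plays no role: the argument works verbatim for $i=1$, and your remark about keeping the exceptional vector away from the first column is not needed.
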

\begin{proof}
    We start with the case of $M(a,i)$, and recall that $M(a,i) = [e_1,j_2,\compactldots,j_{a},e_i+e_{a+1},j_{a+2},\compactldots,j_n]$. We first subtract the $(a+1)$th column from the $(i+1)$th column to obtain
    \begin{align*}
        M(a,i) =& [e_1,j_2,\compactldots,j_{a},e_{i+1}-e_{a+1},j_{a+2},\compactldots,j_n] \\
        &+ [e_1,j_2,\compactldots, j_{i},e_{i+1}-e_{a+1},\compactldots,j_{a},e_i+e_{a+1},j_{a+2},\compactldots,j_n]\end{align*}
    and the first term is, after scaling the $(a+1)$th column by $-1$, equal to $L(a,i+1)$. By the previous lemma (using that the parity of $n-a$ agrees with that of $a$ since $n$ is even), it suffices to show that the second term is equal to $M(a+1,i+1)$. For brevity, we call this term $R$.

    Cycling the columns according to the permutation $(i+1,i+2,\dots a+1)$ we have
    \[
        R = (-1)^{a-i}[e_1,j_2,\compactldots, j_{i},e_i+e_{a+1}, e_{i+1}-e_{a+1},\compactldots,j_{a},j_{a+2},\compactldots,j_n].   
        \]
    We then cycle the rows according to the same permutation which eliminates the $(-1)^{a-i}$ on the outside but necessitates multiplying the last row by $(-1)^{a-i}$.  Thus we have
    \[
        R = [e_1,j_2,\compactldots, j_{i},j_{i+1}, e_{i+2}-e_{i+1},\compactldots,j_{a+1},e_{i+1}+e_{a+2},\compactldots,j_{n-1},e_{n-1}+(-1)^{a-i}e_n]
    \]
    which is exactly $N(a+1,i+1)$, except there is a $-1$ on the superdiagonal in matrix position $(i+1,i+2)$.  As in the last lemma, we may scale rows $i+2$ to $a+1$ by $-1$ and multiply the last row by $(-1)^{a-i}$ and then scale columns $i+2$ to $a+1$ by $-1$ to show that $R = M(a+1, i+1)$.
    
    The computation of $N(a,i)$ is essentially the same except that the bottom right entry in the matrices has an extra factor of $-1$ throughout.  
\end{proof}

\begin{proposition}\label{proposition: M plus N}
    For $n$ even and any $a$ and $i$, we have
    \[
        M(a,i)+N(a,i) = {{n-i}\choose{a-i}}\cdot\left(\{1\}_n+\{-1\}_n\right).
    \]
\end{proposition}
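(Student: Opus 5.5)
We prove the statement by induction, using the recursions for $M$ and $N$ established above. Write $S_n \coloneq \{1\}_n+\{-1\}_n$ and $T(a,i) \coloneq M(a,i)+N(a,i)$, so the claim reads $T(a,i)=\binom{n-i}{a-i}S_n$. The key point is that adding the two recursions of \cref{lemma: M and N reduction} removes the parity distinction. For $a$ even we get $T(a,i)=T(a,i+1)+T(a+1,i+1)$. For $a$ odd we get
\[
T(a,i)=\bigl(N(a,i+1)+M(a,i+1)\bigr)+\bigl(M(a+1,i+1)+N(a+1,i+1)\bigr),
\]
which is the same identity. Thus $T$ satisfies the Pascal-type recursion
\[
T(a,i)=T(a,i+1)+T(a+1,i+1),
\]
and $\binom{n-i}{a-i}$ satisfies the matching identity $\binom{n-i}{a-i}=\binom{n-i-1}{a-i-1}+\binom{n-i-1}{a-i}$.

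The base cases come first. For $i=a$ we have $M(a,a)=\{1\}_n$ and $N(a,a)=\{-1\}_n$, so $T(a,a)=S_n=\binom{n-a}{0}S_n$. For $a=n-1$, \cref{prop: the case a is n minus 1} gives directly, in both parity cases, $T(n-1,i)=(n-i)S_n=\binom{n-i}{n-1-i}S_n$. (Alternatively, adding the two identities of \cref{lemma: M and N reduction in case a=n-1} gives $T(n-1,i)=T(n-1,i+1)+S_n$, and the result follows by downward induction from $T(n-1,n-1)=S_n$.)

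The inductive step is a downward induction on $a$ from $n-1$. Within each fixed $a$, we run a downward induction on $i$ from $a$. At the pair $(a,i)$ we apply the summed recursion. Both $(a,i+1)$ and $(a+1,i+1)$ are already covered, so the binomial identity closes the step.

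The main obstacle is the range of validity of \cref{lemma: M and N reduction}. It is stated only for $1<i<a<n-1$, so the case $i=1$ (and the case $a=n-1$, which is already handled) lies outside it. I expect the proof of that lemma to go through verbatim for $i=1$: subtracting column $a+1$ from column $2$ and cycling columns and rows by $(2,\dots,a+1)$ uses nothing specific to $i>1$. I would therefore first check that computation for $i=1$ and extend the lemma to $1\le i<a<n-1$. If some degenerate adjacency causes trouble, as in the $i=n-2$ caveat of \cref{lemma: M and N reduction in case a=n-1}, I would treat that case by the same direct matrix manipulation. The summed recursion only needs the identity up to the parity bookkeeping, so it is robust to exactly the sign issues that distinguish $M$ from $N$.
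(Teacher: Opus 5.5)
Your proof is correct and is essentially the paper's argument: the same base cases $i=a$ and $a=n-1$ (via \cref{prop: the case a is n minus 1}), the same observation that summing the two recursions of \cref{lemma: M and N reduction} removes the parity dependence, and Pascal's identity to close the downward induction. Your concern about $i=1$ is legitimate, since the paper applies that lemma there without comment; its proof does go through verbatim for $1\le i<a\le n-2$. For example, for $n=4$, $a=2$, $i=1$ one checks directly that $M(2,1)=M(2,2)+M(3,2)$.
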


\begin{proof}
    We work by downward induction on the values of $a$ and $i$.  The formulas for $M(n-1,i)$ and $N(n-1,i)$ in \cref{prop: the case a is n minus 1} establish the formula for the maximal choice $a=n-1$.  The maximal case $i = a$ is clear from the identities $M(a,a) = \{1\}_n$ and $N(a,a) = \{-1\}_n$.

    Fix a pair $(a,i)$ and suppose that we have established the formula for any pair $(b,j)$ with either $b>a$ or $j>i$.  Applying \cref{lemma: M and N reduction} we have
    \[
        M(a,i)+N(a,i) = M(a,i+1)+N(a,i+1)+M(a+1,i+1)+N(a+1,i+1).
    \]
    Note that this does not depend on whether $a$ is even or odd, even though the formulas in \cref{lemma: M and N reduction} do depend on this.  Applying the induction hypothesis, we have
    \[
        M(a,i)+N(a,i) = \left({{n-i-1}\choose{a-i-1}}+{{n-i-1}\choose{a-i}} \right)\cdot (\{1\}_n+\{-1\}_n)
    \]
    which is equal to ${{n-i}\choose{a-i}}\cdot (\{1\}_n+\{-1\}_n)$, by Pascal's identity.
\end{proof}

\begin{proposition}\label{proposition: M in terms of P and Q}
    For $n$ even and all $r\leq a< n$, we have 
    \begin{align*}M(n,a,r) &= P(n-r,a-r)\{1\}_n+Q(n-r,a-r)\{-1\}_n\\
    N(n,a,r) &= Q(n-r,a-r)\{1\}_n+P(n-r,a-r)\{-1\}_n.\end{align*}
\end{proposition}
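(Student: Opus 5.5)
We argue by downward induction on the pair $(a,r)$, in the same order as in the proof of \cref{proposition: M plus N}: we assume the formulas for all pairs $(b,j)$ with $b>a$, or with $b=a$ and $j>r$. Throughout, $n$ is even, so the parity of $a$ agrees with that of $n-a$, and the parity of $a-r$ agrees with that of $n-r$.

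\emph{Base cases.} If $r=a$, then $M(a,a)=\{1\}_n$ and $N(a,a)=\{-1\}_n$. This matches the claim, since $P(n-a,0)=1$ and $Q(n-a,0)=\binom{n-a}{0}-1=0$. If $a=n-1$, we compare \cref{prop: the case a is n minus 1} with $P(n-r,n-1-r)$. Writing $m=n-r$, we need $P(m,m-1)=\tfrac{m+1}{2}$ for $m$ odd and $P(m,m-1)=\tfrac m2$ for $m$ even. Both follow from \cref{lem:expressions-of-p-and-q}: for $m$ even, $k=m-1$ is odd and $P=\tfrac12 m$; for $m$ odd, $k=m-1$ is even and $P=\tfrac12(m+1)$, since $\lfloor m/2\rfloor=\lfloor (m-1)/2\rfloor$. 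The value of $Q$ is then forced by $Q=m-P$, which agrees with the $\{-1\}_n$ coefficients there. The $N$ formula follows in the same way, or alternatively from \cref{proposition: M plus N} once $M$ is known.

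\emph{Inductive step.} For $r<a<n-1$ we use \cref{lemma: M and N reduction}, which is stated for $1<i$. The case $r=1$ should go through by the identical column computation; if it does not, we extend the lemma to $i=1$ by checking that its proof does not use $i>1$. Set $m=n-r$ and $k=a-r$, so that $(a,r+1)$ corresponds to $(m-1,k-1)$ and $(a+1,r+1)$ corresponds to $(m-1,k)$.
\begin{itemize}
\item If $a$ is even, then $m$ and $k$ have the same parity, and $M(a,r)=M(a,r+1)+M(a+1,r+1)$. The induction hypothesis gives the $\{1\}_n$ coefficient $P(m-1,k-1)+P(m-1,k)$, which should equal $P(m,k)$. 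Both $m,k$ are even or both are odd, so this is the ``else'' case of the recursion in \cref{defn: lozanic}. The $\{-1\}_n$ coefficient then follows from $Q=\binom{\cdot}{\cdot}-P$ and Pascal's identity.
\item If $a$ is odd, then $m$ and $k$ have opposite parity, and $M(a,r)=N(a,r+1)+M(a+1,r+1)$. The $\{1\}_n$ coefficient is $Q(m-1,k-1)+P(m-1,k)$. If $m$ is even and $k$ is odd, this equals $P(m,k)$ by \cref{lemma: P plus Q even n}. If $m$ is odd and $k$ is even, it equals $P(m,k)$ by the second identity of \cref{lemma: P plus Q odd n}.
\end{itemize}
The $N(a,r)$ cases are symmetric: swap the roles of $P$ and $Q$ and use the remaining identities of those two lemmas.

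The main obstacle is bookkeeping rather than ideas. First, we must check that the parity of $a$ determines the parities of $m$ and $k$ exactly as needed to land in the lemma that applies. Second, we must handle the edge index $r=1$ and the boundary where $a+1=n-1$, where the inductive hypothesis reaches the base case from \cref{prop: the case a is n minus 1}. In writing it up, I would first tabulate the four parity cases for $(m,k)$ and match each to \cref{defn: lozanic}, \cref{lemma: P plus Q even n} or \cref{lemma: P plus Q odd n}.
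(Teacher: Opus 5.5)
Your proposal is correct and follows essentially the same route as the paper. Both arguments use downward induction over $(a,r)$ with base cases $r=a$ and $a=n-1$, the latter via \cref{prop: the case a is n minus 1} and \cref{lem:expressions-of-p-and-q}. Both then run an inductive step through \cref{lemma: M and N reduction}, split by the parity of $a$ and closed by the Losanitsch recursion, or by \cref{lemma: P plus Q even n} or \cref{lemma: P plus Q odd n} according to the parity of $n-r$. You differ only in carrying the $N$ formula through the induction rather than deducing it from \cref{proposition: M plus N}, and in explicitly flagging the use of \cref{lemma: M and N reduction} at $i=1$. The paper makes that use silently, and it is harmless since the lemma's proof never uses $i>1$.
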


\begin{proof}
    It suffices to establish the claim for $M(n,a,r)$ since the claim for $N(n,a,r)$ then follows immediately from \cref{proposition: M plus N}.  We define a partial ordering on triples $(n,a,r)$ where we have $(n,a,r)\geq (m,b,s)$ if
    \begin{itemize}
        \item $n>m$,
        \item $n=m$ and $a<b$,
        \item $n=m$, $a=b$, and $r<s$.
    \end{itemize}
    That is, this ordering is just lexicographical but with the ordering in the second and third positions being the opposite of the usual ordering.  This ordering is bounded below, and therefore we can prove the theorem by (strong) induction up this partial ordering.  Note that the triples $(n,a,r+1)$ and $(n,a+1,r+1)$ are both less than $(n,a,r)$ in this ordering.

    We first consider two base cases: If $r=a$, then \[M(n,a,a)=\{1\}_n = P(n-a,0)\{1\}_n+Q(n-a,0)\{-1\}_n\] because $P(n-a,0) = 1$ and $Q(n-a,0) = 0$ by \cref{defn: lozanic}. If $r < a=n-1$, then \[M(n,n-1,r)= P(n-r,n-r-1)\{1\}_n+Q(n-r,n-r-1)\{-1\}_n\]
    by \cref{prop: the case a is n minus 1} and \cref{lem:expressions-of-p-and-q}.

    Suppose then that we have proved the claim for all triples which are less than $(n,a,r)$ in the ordering.  If $r=a$ we are done, so assume that $r<a$.  We will apply \cref{lemma: M and N reduction}, but this requires separating into cases based on whether or not $a$ is even or odd.  Suppose first that $a$ is even.  We have 
    \begin{align*}
        M(n,a,r) & = M(n,a,r+1)+ M(n,a+1,r+1)\\
        &= (P(n-r-1,a-r-1)+P(n-r-1,a-r))\{1\}_n\\&+(Q(n-r-1,a-r-1)+Q(n-r-1,a-r))\{-1\}_n\\
        &  = P(n-r,a-r)\{1\}_n+Q(n-r,a-r)\{-1\}_n,
    \end{align*}
    where the second equality is the induction hypothesis and the third uses \cref{defn: lozanic}, the fact that $n-r$ and $a-r$ have the same parity and Pascal's identity.
    When $a$ is odd 
    \begin{align*}
        M(n,a,r) & = N(n,a,r+1) + M(n,a+1,r+1)\\
        &= (Q(n-r-1,a-r-1)+P(n-r-1,a-r))\{1\}_n\\&+(P(n-r-1,a-r-1)+Q(n-r-1,a-r))\{-1\}_n
    \end{align*}
    and we must separate into two cases based on whether $r$ is even or odd.  When $r$ is even the result follows from \cref{lemma: P plus Q even n} and when $r$ is odd it follows from \cref{lemma: P plus Q odd n}.  Thus we have established the induction step, proving the proposition.
\end{proof}

Finally, we are able to prove \cref{thm: products}.

\begin{proof}[Proof of \cref{thm: products}]
    Suppose that $n$ and $a$ are even. We combine \cref{proposition: M in terms of P and Q} with \cref{proposition: products in terms of M and N}, \cref{defn: lozanic} and Pascal's identity to get
\begin{align*}
    \{1\}_a\cdot\{1\}_b &= M(n,a,1)+M(n,a+1,1) \\
    & =(P(n-1,a-1)+P(n-1,a))\{1\}_n+(Q(n-1,a-1)+Q(n-1,a))\{-1\}_n\\
    & = P(n,a)\{1\}_n+Q(n,a)\{-1\}_n.
\end{align*}
\noindent Similarly, if $n$ is even and $a$ is odd we obtain
\begin{align*}
    \{1\}_a\cdot\{1\}_b &= M(n,a,1)+N(n,a+1,1) \\
    & =(P(n-1,a-1)+Q(n-1,a))\{1\}_n+(Q(n-1,a-1)+P(n-1,a))\{-1\}_n\\
    & = P(n,a)\{1\}_n+Q(n,a)\{-1\}_n
\end{align*}
where the last equality is \cref{lemma: P plus Q even n}. 
\end{proof}

\section{Theorem A: dimension 2}

The goal of this section is to prove Theorem \ref{mainthm:dimension 2}, calculating the double Steinberg coinvariants in rank $n=2$. Recall that the answer will be phrased in terms of the (symmetric) Grothendieck--Witt group $\rm{GW}(F)$, the group completion of the abelian monoid of isomorphism classes of symmetric bilinear forms over $F$ under orthogonal sum. This group is generated by the rank one forms $\langle a \rangle$ for $a \in F^\times$, subject to the relations (i) $\langle\lambda^2 a \rangle = \langle a \rangle$ for $a,\lambda \in F^\times$ and (ii) $\langle a \rangle + \langle b \rangle = \langle a+b\rangle + \langle ab(a+b) \rangle$ for $a,b,a+b \in F^\times$ (see e.g.\ \cite[Theorem 2.1.11]{deglise}). To obtain the Witt group $\rm{W}(F)$ as a quotient of $\rm{GW}(F)$, we add the further relation (iii) the hyperbolic form $h = \langle 1 \rangle + \langle -1 \rangle$ satisfies $h=0$. It is a well-known consequence of the Witt cancellation theorem that there is a pullback square of abelian groups (see e.g.~\cite[Theorem 2.1.11 (ii)]{Scharlau})
\[\begin{tikzcd} \rm{GW}(F) \rar{\rm{rk}} \dar[swap]{\rm{pr}} &[15pt] \bb{Z} \dar{\rm{pr}} \\[-5pt]
\rm{W}(F) \rar{\rm{rk}\,(\rm{mod}\,2)} & \bb{Z}/2\end{tikzcd}\]
where $\rm{rk}$ records the dimension of the underlying vector space. To construct the map $\St^2_{2}(F)_{\SL_2(F)}\to\rm{GW}(F)$ in Theorem \ref{mainthm:dimension 2}, it suffices to construct a pair of maps:
\begin{enumerate}[\noindent (1)]
    \item The first is the map to $\GL_n$-coinvariants
        \[\epsilon \colon \scr{A}_n \longrightarrow (\St^2_n)_{\GL_n} \overset{\cong}\longrightarrow \bbZ,\]
    where the right-hand computation is \cite[Theorem 2.4]{GKRW25}.
    \item The second is a map 
        \[\rm{scr} \colon \scr{A}_2 \longrightarrow \rm{W}(F),\]
    which takes the rank-one symmetric bilinear form with coefficient a reduced cross-ratio.
\end{enumerate}  
These maps will be compatible with the map to $\bb{Z}/2$, so they induce a map $\Phi\colon \St^2_{2}(F)_{\SL_2(F)}\to\rm{GW}(F)$ by the universal property of a pullback. We will prove $\Phi$ is an isomorphism by constructing an inverse $\Psi$ using the aforementioned presentation of $\rm{GW}(F)$.

\medskip

\begin{notation}
    To define the map $\rm{scr}$, given its connection to projective geometry, it will often be convenient to conflate a vector appearing in a generator of $\St_2$ or $\St_2^2$ with the line it generates. Given $v_1,v_2,v_3,v_4$ nonzero vectors in $F^2$ and taking $\ell_i = \text{span}\{v_i\},$ we will therefore write $[\ell_1,\ell_2]\tensor[\ell_3,\ell_4]$ for the class $[v_1,v_2]\tensor[v_3,v_4]$. By column scaling, there is no ambiguity in this notation.
\end{notation} By \cite[Theorem 23]{CRR}, the group $\St_n\otimes\St_n$ is generated by \emph{Coxeter pairs} $[\ell_1,\compactldots, \ell_n]\otimes [m_1,\compactldots, m_n]$ where $\ell_1,\ldots,\ell_n,m_1,\ldots,m_n \in \bb{P}^{n-1}$ is a collection of lines satisfying (a) $m_1=\ell_1$ and (b) for all $1<i\leq n,$ $m_i\in \ol{\ell_{i-1}\ell_i}$ (where $\ol{\ell_{i-1}\ell_i}$ is the line in $\bb{P}^{n-1}$ given by span of $\ell_{i-1}$ and $\ell_i$). We call such an element \emph{generic} if (b') for all $1<i\leq n,$ $m_i\in \ol{\ell_{i-1}\ell_i} \setminus \{\ell_{i-1},\ell_i\}$, and consider the set
\[X_n \coloneq \{(v_1,\dots, v_n, w_1,\dots, w_n)\in (\bbP^{n-1})^{2n}\mid [v_1,\compactldots, v_n]\otimes [w_1,\compactldots, w_n] \text{ is generic}\}.\]

\begin{lemma}
    The set $X_n/\SL_n$ of orbits of generic elements is in bijection with $F^\times/(F^\times)^n$.
\end{lemma}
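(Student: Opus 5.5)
We need to understand $X_n$ as tuples of lines $(\ell_1,\dots,\ell_n,m_1,\dots,m_n)$ with $\ell_1,\dots,\ell_n$ a frame of $\bbP^{n-1}$ (they must span for the Steinberg symbol to be a generator), $m_1=\ell_1$, and $m_i$ on the projective line $\ol{\ell_{i-1}\ell_i}$ but distinct from $\ell_{i-1},\ell_i$. The plan is to normalize by $\GL_n$ first, identify the remaining freedom, and then measure the discrepancy between $\GL_n$ and $\SL_n$ by a determinant.

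\emph{Normal form.} Choose spanning vectors $v_i\in\ell_i$. Genericity of $m_i$ means $m_i$ is spanned by $\alpha_i v_{i-1}+\beta_i v_i$ with $\alpha_i,\beta_i\neq 0$. Rescale the $v_i$ inductively so that $m_i=\mathrm{span}(v_{i-1}+v_i)$ for all $i\geq 2$. This fixes the tuple $(v_1,\dots,v_n)$ up to a single common scalar $c\in F^\times$, because each ratio between $v_{i-1}$ and $v_i$ is now determined. The matrix $V=[v_1,\dots,v_n]\in\GL_n$ sends the standard configuration $(e_1,\dots,e_n,e_1,e_1+e_2,\dots,e_{n-1}+e_n)$ to the given one. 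Hence $\GL_n$ acts transitively on $X_n$. The stabilizer of the standard configuration consists of matrices preserving each $\mathrm{span}(e_i)$ and each $\mathrm{span}(e_{i-1}+e_i)$; these are exactly the scalar matrices $c\cdot\mathrm{id}_n$. (A diagonal matrix fixing $e_{i-1}+e_i$ up to scalar must have equal $(i-1)$st and $i$th entries.)

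\emph{Invariant.} We therefore get a $\GL_n$-equivariant bijection $X_n\cong \GL_n/F^\times$, with $F^\times$ the scalars. The $\SL_n$-orbits are then $\SL_n\backslash \GL_n/F^\times$. The determinant identifies $\SL_n\backslash\GL_n$ with $F^\times$, and the scalars act by $\det(c\cdot\mathrm{id})=c^n$. So the orbit set is $F^\times/(F^\times)^n$. Concretely, the invariant of a configuration is $\det[v_1,\dots,v_n]$ for the normalized vectors, taken modulo $n$th powers; changing $c$ multiplies it by $c^n$. Well-definedness and injectivity both follow from the stabilizer computation: two configurations with the same class have normalized matrices $V,V'$ with $\det V'=c^n\det V$, and then $V'(cV)^{-1}\in\SL_n$ carries one configuration to the other. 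Surjectivity is immediate from using $\mathrm{diag}(a,1,\dots,1)$ as $V$.

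\emph{Main obstacle.} The main point needing care is the stabilizer claim together with the exact definition of $X_n$. One must confirm that genericity forces $\ell_1,\dots,\ell_n$ to be independent, or else restrict to tuples where the Steinberg symbols are nonzero. The normalization also uses $\alpha_i\beta_i\neq 0$ at each step, which is exactly condition (b'). Once these are checked, the rest is the double coset computation above.
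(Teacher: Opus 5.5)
Your proof is correct, but it takes a different route from the paper's.

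\textbf{The paper's route.} It works with $\SL_n$ throughout. It moves the frame $\ell_1,\dots,\ell_n$ to the coordinate lines, so every orbit contains some $\scr{J}(x_1,\dots,x_{n-1})$. The remaining freedom is the diagonal subgroup of $\SL_n$, which acts by $x_i\mapsto(\alpha_i/\alpha_{i+1})x_i$. The paper then solves the multiplicative system $\alpha_i/\alpha_{i+1}=y_i/x_i$, $\alpha_1\cdots\alpha_n=1$ by hand. Its solvability is equivalent to $x_1x_2^2\cdots x_{n-1}^{n-1}$ and $y_1y_2^2\cdots y_{n-1}^{n-1}$ agreeing modulo $n$th powers.

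\textbf{Your route.} You pass to $\GL_n$ and show that it acts transitively on $X_n$ with stabilizer exactly the scalars. You then read off $X_n/\SL_n\cong\SL_n\backslash\GL_n/F^\times\cong F^\times/(F^\times)^n$ from the determinant.

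\textbf{What each approach buys.} Yours is more conceptual. Well-definedness and injectivity come for free from the stabilizer computation, and the exponent $n$ is explained as $\det(c\cdot\mathrm{id}_n)=c^n$. The paper's computation instead gives explicit normal forms and an explicit formula for $q$ in terms of the superdiagonal parameters, which it uses afterwards (e.g.\ to compare with Barge's reduced cross-ratio when $n=2$).

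\textbf{The two invariants agree.} For $\scr{J}(x_1,\dots,x_{n-1})$ your normalized vectors are $v_i=(x_1\cdots x_{i-1})^{-1}e_i$. Their determinant $\prod_j x_j^{-(n-j)}$ is congruent to $\prod_j x_j^{j}$ modulo $n$th powers. Your surjectivity witness $\mathrm{diag}(a,1,\dots,1)$ produces exactly the paper's representative $\scr{J}(a,1,\dots,1)$.

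\textbf{Your flagged obstacle.} This is not a real issue. The symbol $[v_1,\dots,v_n]$ is only defined for a basis, so independence of $\ell_1,\dots,\ell_n$ is part of the definition of $X_n$. The paper's proof uses it tacitly as well. The independence of the $m_i$ is then automatic, as your normal form shows.
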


\begin{proof}
    As the group $\SL_n$ acts transitively on the first $n$ terms, $\SL_n$ carries any element of $X_n$ to a tuple of the form 
    \[\scr{J}(x_1,\dots, x_{n-1}) \coloneq (e_1,\dots, e_n, e_1, e_2+x_1e_1 , \dots, e_n+x_{n-1}e_{n-1})\]
    where each $x_i\in F^{\times}$.  Since the stabilizer of $(e_1,\dots, e_n)$ is the group of diagonal matrices in $\SL_n$, two such tuples represent the same orbit if and only if there is a diagonal matrix $\rm{diag}(\alpha_1,\dots,\alpha_n)$ with $\alpha_1 \cdots \alpha_n = 1$ that carries one to the other. Since $\rm{diag}(\alpha_1,\dots,\alpha_n) \cdot \scr{J}(x_1,\dots, x_{n-1}) = \scr{J}(\frac{\alpha_1}{\alpha_2} x_1 ,\dots, \frac{\alpha_{n-1}}{\alpha_n} x_{n-1})$, this means that $\scr{J}(x_1,\dots, x_{n-1})$ and $\scr{J}(y_1,\dots, y_{n-1})$ represent the same orbit if and only if there are units $\alpha_1,\dots, \alpha_n\in F^\times$ solving the system of $n$ equations
    \[\frac{\alpha_1}{\alpha_2} = \frac{y_1}{x_1}, \quad \ldots \quad , \frac{\alpha_{n-1}}{\alpha_n} = \frac{y_{n-1}}{x_{n-1}}, \quad \text{and} \quad \alpha_1\cdots\alpha_n =1.\]
    It is elementary to check that the latter holds if and only if an $n$th root
    \begin{equation}
        \label{eq:nthroot}
        \alpha_n = \sqrt[n]{\frac{x_1 x_2^2 \dots x_{n-1}^{n-1}}{y_1y_2^2\dots y_{n-1}^{n-1}}} \text{ exists in } F.
    \end{equation}
    It follows that there is a well-defined map $F^\times/(F^\times)^n \to X_n/\SL_n$ sending $[\beta]$ to the orbit of $\scr{J}(\beta, 1,\dots, 1)$, since for all $\lambda^n \in (F^\times)^n$ an $n$th root of $\beta / (\beta \cdot \lambda^n)$ is given by $\lambda^{-1}$. Its inverse is the map that sends the orbit represented by $\scr{J}(x_1,\dots, x_{n-1})$ to $[x_1 x_2^2 \dots x_{n-1}^{n-1}]$, which is well-defined because $(y_1y_2^2\dots y_{n-1}^{n-1}) \cdot \alpha_n^n = x_1 x_2^2 \dots x_{n-1}^{n-1}$ whenever \eqref{eq:nthroot} holds.
\end{proof}

We denote the quotient map sending a generic element to its orbit by 
\[q \colon X_n \longrightarrow F^\times/(F^\times)^n.\]
This is a generalization of the reduced cross-ratio: for $n=2$, one observes that $q(\ell_1,\ell_2,\ell_1,\ell_3)$ yields the reduced cross-ratio $o(\ell_1,\ell_2,\ell_3)$ of \cite[p.~10]{Barge}. Since the set $X_n$ parametrises a generating set for $\scr{A}_n$ by \cref{theorem: An generated by 1s}, we would like to use $q$ to construct a map on $\scr{A}_n$. To do so, we must understand the relations between generic elements. 

We will next make use of a known resolution of the Steinberg module, due to Lee and Szczarba. Fixing a field $F$ and an integer $n$, we let $K$ be the semi-simplicial set whose set of $k$-simplices is the set of tuples $(v_1, \dots, v_{k+1})$ of nonzero vectors $v_i \in F^n$. Let $L$ be the semi-simplicial subset consisting of those simplices which do \emph{not} span $F^n$. We note that the associated chain complexes satisfy $C_k(K) = C_k(L)$ for $k\leq n-2$. The following chain complex is then a $\bbZ[\GL_n]$-free resolution of $\St_n$ by \cite[Theorem 3.1]{LeeSzczarba}:

\begin{definition}
    \label{def:sharbly}
     The \emph{Sharbly resolution} is given by $P_\bullet\to \St_n$ with $P_i \coloneq C_{i+n-1}(K,L)$ as above, and where the map $\epsilon \colon P_0 =  C_{n-1}(K,L)\to \St_n(F)$ is given by sending a basis $(v_1,\dots, v_n)$ of $F^n$ to the generator $[v_1,\compactldots, v_n]$ of $\St_n(F)$ in \cref{def:steinberg}.
\end{definition}

If $n=2$, whenever three lines $\ell_1,\ell_2,\ell_3$ are pairwise disjoint we must have that $\ell_3\in \ol{\ell_1\ell_2} \setminus \{\ell_1,\ell_2\}$. This fact allows us to determine a spanning set for the relations between generic elements of a quotient of $\St_2\otimes\St_2$. 
We introduce the notation
\[\ol{\St^2_2} \coloneq \dfrac{\St^2_2}{\text{span}\left\{[\ell_1,\ell_2]\otimes[\ell_1,\ell_2]\mid \ell_1,\ell_2\in \bbP^1\right\}}.\]

\begin{remark}In fact, one can think of this as the quotient of $\St^2_2$ by ``decomposables'', with respect to the product structure on double Steinberg modules from \cite[Section 3.3]{CRR} (distinct from the one considered above) and from that perspective the first statement in the next lemma is \cite[Proposition 27]{CRR}.\end{remark}

For nonzero $v_1,v_2,v_3,v_4\in F^4$, define an element of $P_0\otimes P_0$ by:
    \[r(v_1,v_2,v_3,v_4) \coloneq (v_1,v_2)\otimes(v_1,v_3) - (v_1,v_2)\otimes (v_1,v_4) + (v_1,v_3)\otimes(v_1,v_4)-(v_2,v_3)\otimes (v_2,v_4).\]

\begin{lemma}\label{lem:dim2Rels}
    $\ol{\St^2_2}$ is generated by generic elements, with relations generated by the classes $\epsilon(r(v_1,v_2,v_3,v_4))$.
\end{lemma}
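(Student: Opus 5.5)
The plan is to present $\St^2_2$ via the tensor square of the Sharbly resolution and then kill the decomposables. Since $P_\bullet \to \St_2$ is a free resolution, right exactness of $\otimes$ gives a presentation
\[
P_1\otimes P_0 \,\oplus\, P_0\otimes P_1 \longrightarrow P_0\otimes P_0 \longrightarrow \St_2\otimes\St_2 \longrightarrow 0.
\]
So $\St^2_2$ is generated by the symbols $(v_1,v_2)\otimes(w_1,w_2)$ with $\{v_1,v_2\}$ and $\{w_1,w_2\}$ bases. The relations are the images of $\partial(u_1,u_2,u_3)\otimes(w_1,w_2)$ and $(v_1,v_2)\otimes\partial(u_1,u_2,u_3)$. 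In rank $2$ the relative boundary of a triple $(u_1,u_2,u_3)$ spanning $F^2$ is
\[
(u_2,u_3)-(u_1,u_3)+(u_1,u_2),
\]
where any pair that does not span is dropped, because it lies in $L$. Passing to $\ol{\St^2_2}$ adds the relations $(\ell_1,\ell_2)\otimes(\ell_1,\ell_2)=0$. Throughout I work with lines, which is harmless by column scaling.

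\textbf{Generation.} Take a generator $[\ell_1,\ell_2]\otimes[m_1,m_2]$ and sort by how many of the $m_i$ lie in $\{\ell_1,\ell_2\}$.
\begin{enumerate}[(i)]
\item If $\{m_1,m_2\}=\{\ell_1,\ell_2\}$, the element is $\pm$ a decomposable, hence zero in $\ol{\St^2_2}$.
\item If exactly one $m_i$ equals some $\ell_j$, permute columns (with sign) so that $m_1=\ell_1$. Then $m_2\neq \ell_1,\ell_2$, and since $n=2$ every line distinct from $\ell_1,\ell_2$ lies in $\ol{\ell_1\ell_2}\setminus\{\ell_1,\ell_2\}$. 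So the element is generic.
\item If all four lines are distinct, apply the $3$-term relation to $(\ell_1,\ell_2,m_1)$ in the first factor:
\[
[\ell_1,\ell_2]=[m_1,\ell_2]-[m_1,\ell_1]
\]
up to the sign conventions. Each resulting term shares the line $m_1$ with the second factor, so it falls under case (ii).
\end{enumerate}

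\textbf{Relations.} I must show every relation among generic symbols is a combination of the $\epsilon(r(\dots))$. First check that each $r(v_1,\dots,v_4)$ lies in the image of the boundary modulo decomposables. I would do this by expanding $\partial(v_1,v_2,v_3)\otimes(v_1,v_4)$ and $(v_1,v_2)\otimes\partial(v_1,v_3,v_4)$, and similar terms, and cancelling. I expect $r$ to be exactly such a combination, possibly plus a term $(v_2,v_3)\otimes(v_2,v_3)$-like decomposable, and the precise bookkeeping is to be checked.

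Conversely, define a rewriting map $\rho$ from all symbols to $\mathbb{Z}\{\text{generic symbols}\}/\langle r\rangle$:
\begin{itemize}
\item decomposables are sent to $0$;
\item generic symbols are sent to themselves, after fixing a normal form $m_1=\ell_1$ with sign;
\item the remaining symbols are sent via the expansion in (iii).
\end{itemize}
Then verify that $\rho$ kills every boundary relation $\partial(u_1,u_2,u_3)\otimes(w_1,w_2)$. Since both factors involve only a few lines in $\bbP^1$, this reduces to a finite case analysis on coincidences among the $u_i$ and $w_j$. The generic case, with five distinct lines, should follow from $r$ applied to suitable quadruples (a cocycle-type identity). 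Degenerate cases either produce decomposables or reduce to a single instance of $r$.

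\textbf{Main obstacle.} I expect this last verification to be the hardest step: showing that the choice in (iii), namely which $m_i$ to pivot on, is well defined modulo $r$, and that the five-line case closes up. For this I would try to use the symmetry of $r$ under permuting $v_2,v_3,v_4$ modulo $r$ itself, together with the rank-two fact that any three distinct lines are in general position. The strategy would mirror \cite[Proposition 27]{CRR}, which I would invoke directly for the generation statement.
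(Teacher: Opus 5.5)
\textbf{There is a genuine gap: the relations half of the lemma is not proved.} The setup and the generation argument are correct. The check that $r$ is a relation also works out: the three-term relations give $\epsilon(r(v_1,v_2,v_3,v_4))=[\ell_1,\ell_3]\otimes[\ell_1,\ell_3]-[\ell_2,\ell_3]\otimes[\ell_2,\ell_3]$. But you never show that your rewriting map $\rho$ sends the boundary relations into $\langle r\rangle$; you only predict how a case analysis will turn out. This is not routine bookkeeping, for two reasons.

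\emph{The difficulty is in the other family of relations.} With five distinct lines, the relations $\partial(u_1,u_2,u_3)\otimes(w_1,w_2)$ telescope to zero under your pivot, with no input from $r$. So the real work is in the mirror relations $(v_1,v_2)\otimes\partial(u_1,u_2,u_3)$, which your verification step does not address.

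\emph{Your piecewise definition conflicts with the pivot.} On $[\ell_1,\ell_2]\otimes[m,\ell_1]$ your normal form gives $-[\ell_1,\ell_2]\otimes[\ell_1,m]$. Expanding via $m$ instead gives $[m,\ell_2]\otimes[m,\ell_1]$ modulo decomposables. Boundary relations mix both kinds of terms, so you must show these agree.

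Both points need the fact that $f(a,b,c)=[a,b]\otimes[a,c]$ is alternating and satisfies a cocycle identity modulo $\langle r\rangle$, and you have not derived this. Well-definedness of the choice in (iii) is not the issue: once the pivot is fixed, $\rho$ is defined on a basis.

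\textbf{How the paper closes the gap.} It uses a \emph{uniform} retraction whose defect is an explicit boundary. Let $G\subseteq P_0\otimes P_0$ be spanned by the elements $(v_1,v_2)\otimes(v_1,v_3)$. Set $\sigma((v_1,v_2)\otimes(v_3,v_4))=(v_1,v_2)\otimes(v_1,v_4)-(v_1,v_2)\otimes(v_1,v_3)$, where non-spanning pairs are zero in $C_1(K,L)$. Then $\sigma|_G=\id$ and $(\id-\sigma)((v_1,v_2)\otimes(v_3,v_4))=d((v_1,v_2)\otimes(v_1,v_3,v_4))$.

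Let $R$ be the kernel of $P_0\otimes P_0\to\ol{\St^2_2}$. The two properties above give $R\cap G=\sigma(R)$, so it suffices to apply $\sigma$ to generators of $R$:
\begin{itemize}
\item $\sigma(d((v_1,v_2,v_3)\otimes(v_4,v_5)))=r(v_1,v_2,v_3,v_4)-r(v_1,v_2,v_3,v_5)$;
\item $\sigma(d((v_1,v_2)\otimes(v_3,v_4,v_5)))=0$;
\item the decomposables $(v_1,v_2)\otimes(v_1,\lambda v_2)$ have the same image as the degenerate $r(v_1,v_2,\lambda v_2,v_4)$.
\end{itemize}
Since degenerate pairs vanish automatically, no analysis of coincidences among the lines is needed.

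\textbf{If you want to keep your first-factor pivot,} make it uniform: $\tau((v_1,v_2)\otimes(w_1,w_2))=(w_1,v_2)\otimes(w_1,w_2)-(w_1,v_1)\otimes(w_1,w_2)$. Then:
\begin{enumerate}
\item note that $r=-\delta f$;
\item derive from the degenerate instances $r(a,b,c,b)$ and $r(a,b,a,d)$ that $f$ is alternating modulo decomposables.
\end{enumerate}
With these, $\tau$ agrees with your $\rho$ modulo $\langle r\rangle$. It kills $\partial(u_1,u_2,u_3)\otimes(w_1,w_2)$ by telescoping. It kills $(v_1,v_2)\otimes\partial(u_1,u_2,u_3)$ by the cocycle identity on $(v,u_1,u_2,u_3)$ for $v=v_1,v_2$.
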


\begin{proof}
    First, we verify that each $r(v_1,v_2,v_3,v_4)$ maps to zero in $\smash{\ol{\St^2_2}}$. To this end, we work with the following model of the double Steinberg module: $\St^2_2 = \ker(\partial_1) \subseteq C_1(T(F^2)\ast T(F^2))$ where $T(F^2)$ is the Tits building (i.e.\ the set of lines in $F^2$). Denoting the line in $F^2$ spanned by $v_i$ with $\ell_i$ and writing $e_{\ell_i,\ell_j}$ for the $1$-cell in $T(F^2) \ast T(F^2)$ corresponding to $\ell_i \ast \ell_j$, we have:
    \begin{align*}
		\epsilon(r(v_1,v_2,v_3,v_4)) &= [\ell_1,\ell_2]\otimes [\ell_1,\ell_3] - [\ell_1,\ell_2]\otimes[\ell_1,\ell_4] + [\ell_1,\ell_3]\otimes[\ell_1,\ell_4] -[\ell_2,\ell_3]\otimes[\ell_2,\ell_4]\\
		&= (e_{\ell_1,\ell_1} - e_{\ell_1,\ell_3}-e_{\ell_2,\ell_1} + e_{\ell_2,\ell_3}) - (e_{\ell_1,\ell_1} - e_{\ell_1,\ell_4}-e_{\ell_2,\ell_1} + e_{\ell_2,\ell_4})\\ &\qquad + ( e_{\ell_1,\ell_1} - e_{\ell_1,\ell_4}-e_{\ell_3,\ell_1} + e_{\ell_3,\ell_4}) -(e_{\ell_2,\ell_2} - e_{\ell_2,\ell_4}-e_{\ell_3,\ell_2} + e_{\ell_3,\ell_4})\\
		&= e_{\ell_1,\ell_1} - e_{\ell_1,\ell_3}-e_{\ell_2,\ell_1} + e_{\ell_2,\ell_3} - e_{\ell_1,\ell_1} + e_{\ell_1,\ell_4}+e_{\ell_2,\ell_1} - e_{\ell_2,\ell_4}\\ &\qquad +  e_{\ell_1,\ell_1} - e_{\ell_1,\ell_4}-e_{\ell_3,\ell_1} + e_{\ell_3,\ell_4} -e_{\ell_2,\ell_2} + e_{\ell_2,\ell_4}+e_{\ell_3,\ell_2} - e_{\ell_3,\ell_4}\\
		&=  - e_{\ell_1,\ell_3} + e_{\ell_2,\ell_3}  +  e_{\ell_1,\ell_1} -e_{\ell_3,\ell_1}  -e_{\ell_2,\ell_2} +e_{\ell_3,\ell_2} \\
		&= [\ell_1,\ell_3]\otimes[\ell_1,\ell_3] - [\ell_2,\ell_3]\otimes[\ell_2,\ell_3].
	\end{align*}
    Therefore $\epsilon(r(v_1,v_2,v_3,v_4))$ is a relation for $\ol{\St^2_2}$. We now demonstrate that these classes generate all relations between generic elements.

    Let $F_\bullet\twoheadrightarrow \St^2_2$ be the resolution obtained by tensoring the Sharbly resolution with itself. Let $G\subseteq F_0 = P_0\otimes P_0$ be the submodule spanned by classes of the form $(v_1,v_2)\otimes (v_1, v_3)$ -- note that we may have $v_3=\lambda v_2$, but that $\{v_1,v_2\}$ and $\{v_1,v_3\}$ each must be a basis of $F^2$. Let us write \[R \coloneq d(F_1)\oplus \text{span}\left\{(v_1,v_2)\otimes (v_1,\lambda v_2) \mid v_1,v_2\in F^2; \;\lambda\in F^\times\right\}.\] The map $\epsilon|_G \colon G \to \St^2_2$ is a surjection, since its image contains the spanning set given by the Solomon--Tits theorem. Therefore the map
    \[\epsilon' \colon G\longrightarrow \ol{\St^2_2}\] 
    is surjective as well and the relations of $\ol{\St^2_2}$ are given by $\ker(\epsilon'\mid_G) = R\cap G$. To calculate $R \cap G$, we consider the map 
    \begin{align*}\sigma \colon F_0 &\longrightarrow G \\
    (v_1,v_2)\otimes (v_3,v_4) &\longmapsto (v_1,v_2)\otimes (v_1,v_4) - (v_1,v_2)\otimes (v_1,v_3).\end{align*}
    It is straightforward to verify this is well-defined, $\SL_2$-equivariant, and satisfies $\sigma|_G = \id_G$. Furthermore, it holds that \[(\id-\sigma)((v_1,v_2)\otimes (v_3,v_4)) = d((v_1,v_2)\otimes (v_1,v_3,v_4)),\] which implies that $(\id-\sigma)(x)\in R.$ Therefore, if $x\in R$, we have $\sigma(x)\in R\cap G$ and hence $\sigma$ splits the inclusion of $R\cap G$ into $R$. A generating set of $R \cap G$ is thus:
    \begin{align*}
        \sigma((v_1,v_2)\otimes ( v_1,\lambda v_2))&= (v_1,v_2)\otimes (v_1, \lambda v_2)\\
        \sigma(d((v_1,v_2,v_3)\otimes (v_4,v_5))) &=  \sigma( (v_1,v_2)\otimes(v_4,v_5)-(v_1,v_3)\otimes (v_4,v_5)+(v_2,v_3)\otimes (v_4,v_5))\\
        &= (v_1,v_2)\otimes(v_1,v_5)-(v_1,v_2)\otimes(v_1,v_4))-(v_1,v_3)\otimes(v_1,v_5) \\ & \qquad + (v_1,v_3)\otimes(v_1,v_4)+(v_2,v_3)\otimes(v_2,v_5) - (v_2,v_3)\otimes(v_2,v_4)\\
        &= r(v_1,v_2,v_3,v_4) - r(v_1,v_2,v_3,v_5).\\
        \sigma(d((v_1,v_2)\otimes(v_3,v_4,v_5))) &= \sigma((v_1,v_2)\otimes(v_4,v_5) - (v_1,v_2)\otimes(v_3,v_5) + (v_1,v_2)\otimes(v_3,v_4))\\
        &= (v_1,v_2)\otimes(v_1,v_5) -(v_1,v_2)\otimes(v_1,v_4) - (v_1,v_2)\otimes(v_1,v_5)\\ & \qquad + (v_1,v_2)\otimes(v_1,v_3) + (v_1,v_2)\otimes(v_1,v_4) - (v_1,v_2)\otimes(v_1,v_3)\\
        &= 0.
    \end{align*}
    Since $r(v_1,v_2,v_3,v_4)$ is a relation, $R\cap G$ is generated by $r(v_1,v_2,v_3,v_4)$ along with $(v_1,v_2)\otimes(v_1,\lambda v_2)$. However, these last relations are redundant:
    \begin{align*}
    \epsilon(r(v_1,v_2,\lambda v_2,v_4)) &= \epsilon((v_1,v_2)\otimes(v_1,\lambda v_2) - (v_1,v_2)\otimes(v_1,v_4)\\ &\qquad + (v_1,\lambda v_2)\otimes(v_1,v_4) -(v_2,\lambda v_2)\otimes(v_2,v_4))\\
    &= [\ell_1,\ell_2]\otimes[\ell_1,\ell_2] \\ &= \epsilon((v_1,v_2)\otimes(v_1,\lambda v_2)).
    \end{align*}
\end{proof}

\begin{lemma}There is a well-defined map
\begin{align*} \rm{scr} \colon \scr{A}_2 &\longrightarrow \rm{W}(F) \\
[\ell_1,\ell_2] \otimes [\ell_1,\ell_3] &\longmapsto \begin{cases} \langle q(\ell_1,\ell_2,\ell_1,\ell_3) \rangle & \text{if $[\ell_1,\ell_2] \otimes [\ell_1,\ell_3]$ is generic,} \\
0 & \text{else.}\end{cases}\end{align*}
\end{lemma}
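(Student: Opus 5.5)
We plan to build $\rm{scr}$ in three stages: first as a map on $\ol{\St^2_2}$, then check that it factors through $\SL_2$-coinvariants, and finally precompose with the quotient $\scr{A}_2 = (\St^2_2)_{\SL_2} \to (\ol{\St^2_2})_{\SL_2}$. Since the value $0$ is assigned to non-generic elements, we must check that this quotient is compatible. The non-generic Coxeter pairs $[\ell_1,\ell_2]\otimes[\ell_1,\ell_3]$ with $\ell_3\in\{\ell_1,\ell_2\}$ are either degenerate (for $\ell_3=\ell_1$ the second factor is not a basis) or of the form $[\ell_1,\ell_2]\otimes[\ell_1,\ell_2]$. The latter are exactly the elements killed in $\ol{\St^2_2}$. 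It therefore suffices to define a homomorphism $\ol{\St^2_2}\to \rm{W}(F)$ sending a generic element $[\ell_1,\ell_2]\otimes[\ell_1,\ell_3]$ to $\langle q(\ell_1,\ell_2,\ell_1,\ell_3)\rangle$.

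By \cref{lem:dim2Rels}, $\ol{\St^2_2}$ is generated by generic elements, and the relations are generated by the classes $\epsilon(r(v_1,v_2,v_3,v_4))$. Hence we first define the map on the free abelian group on generic tuples in $X_2$, via $q$ followed by $F^\times/(F^\times)^2 \to \rm{W}(F)$, $[\beta]\mapsto\langle\beta\rangle$. This is well defined by relation (i) of $\rm{GW}$. The lemma identifying $X_n/\SL_n$ with $F^\times/(F^\times)^n$ shows that $q$ is $\SL_2$-invariant, so the resulting map automatically factors through coinvariants. One subtlety: in $r(v_1,\dots,v_4)$ some terms may be non-generic (if lines coincide), and these are sent to $0$. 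We will handle those degenerate configurations separately, using the computation in the proof of \cref{lem:dim2Rels}, which reduces them to classes $[\ell,\ell']\otimes[\ell,\ell']$, or to $0$ by symmetry.

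The main step is to show that for four pairwise distinct lines,
\[\langle q(\ell_1,\ell_2,\ell_1,\ell_3)\rangle - \langle q(\ell_1,\ell_2,\ell_1,\ell_4)\rangle + \langle q(\ell_1,\ell_3,\ell_1,\ell_4)\rangle - \langle q(\ell_2,\ell_3,\ell_2,\ell_4)\rangle = 0 \quad\text{in } \rm{W}(F).\]
We would use $\SL_2$ to normalize $v_1=e_1$, $v_2=e_2$, $v_3=e_1+e_2$ (up to scalar, at the cost of a square class) and $v_4=e_1+te_2$ with $t\neq 0,1$. We then compute each $q$ explicitly as a $2\times 2$ determinant quotient, $q(\ell_1,\ell_2,\ell_1,\ell_3) \sim \det(v_1,v_2)\det(v_1,v_3)\cdot(\text{coefficient})$ up to squares. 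This yields four square classes, something like $\langle 1\rangle,\langle t\rangle,\langle t-1\rangle$ and $\langle 1-t\rangle$ (up to signs). The identity then reduces to a relation in $\rm{W}(F)$ of the shape $\langle a\rangle+\langle b\rangle=\langle a+b\rangle+\langle ab(a+b)\rangle$ together with $\langle c\rangle+\langle -c\rangle=0$. The main obstacle is getting the signs and square classes exactly right so that the four terms cancel. Working in $\rm{W}(F)$, rather than $\rm{GW}(F)$, is what absorbs the hyperbolic terms $\langle c\rangle+\langle -c\rangle=h$. If the naive normalization fails, we would first recheck the orientation conventions in $q$ for $n=2$, namely $q=[x_1]$ for $\scr{J}(x_1)$.
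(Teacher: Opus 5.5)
Your proposal is correct in outline and follows the paper's skeleton: pass to $\overline{\St^2_2}$, present it by \cref{lem:dim2Rels}, use $\SL_2$-invariance of $q$ to descend to coinvariants, and check that the formal relations $r(v_1,v_2,v_3,v_4)$ are killed. The two routes differ only in this last check. The paper identifies $q(\ell_1,\ell_2,\ell_1,\ell_3)$ with Barge's reduced cross-ratio and cites Barge's theorem that it is a $2$-cocycle $(\mathbb{P}^1)^3\to\mathrm{W}(F)$, extended by $0$ on non-distinct triples. The image of $r$ is minus its coboundary, so degenerate quadruples are covered automatically. You instead verify the cocycle identity by hand. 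That works and makes the argument self-contained, but your sketch puts the difficulty in the wrong place, and your guessed square classes would not cancel.

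\textbf{Generic case.} Modulo squares, $q(\ell_1,\ell_2,\ell_1,\ell_3)\equiv-\det(v_1,v_2)\det(v_1,v_3)\det(v_2,v_3)$. $\SL_2$ alone cannot move $v_3$ to $e_1+e_2$, since its orbits on distinct triples are exactly the values of $q$. However, $g\in\GL_2$ multiplies every term by the unit $\langle\det g\rangle$, so normalizing with $\GL_2$ is harmless. With $v_1=e_1$, $v_2=e_2$, $v_3=e_1+e_2$, $v_4=e_1+te_2$, the image of $r$ is $\langle 1\rangle-\langle t\rangle+\langle t(1-t)\rangle-\langle 1-t\rangle$. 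This vanishes by relation (ii) with $a=t$, $b=1-t$, already in $\mathrm{GW}(F)$; no hyperbolic cancellation is needed. Your guess with $\langle t-1\rangle$ would fail.

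\textbf{Degenerate cases.} This is where $\mathrm{W}(F)$ is forced. The computation in the proof of \cref{lem:dim2Rels} does not help here: it evaluates $\epsilon(r)$ inside $\St^2_2$, whereas you need the value of your would-be map on the formal relation. What must be checked is the following.
\begin{enumerate}[(a)]
\item For $\ell_1=\ell_2$, $\ell_2=\ell_3$ or $\ell_3=\ell_4$, the surviving generic terms cancel identically.
\item For $\ell_4=\ell_1$, you need $q$ to be invariant under cyclically permuting the three lines, which holds modulo squares.
\item For $\ell_3=\ell_1$ or $\ell_4=\ell_2$, the relation says that swapping two of the lines negates the class. Since such a swap replaces $q$ by $-q$, this holds only because $\langle c\rangle+\langle -c\rangle=h=0$ in $\mathrm{W}(F)$.
\item With two or more coincidences among the lines, every term is non-generic and maps to $0$.
\end{enumerate}
With these cases written out, your route is a complete, citation-free proof. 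It also shows something the citation hides: the generic identity is literally relation (ii), and the need to kill $h$ comes solely from the degenerate, antisymmetry relations. This matches the fact that $[\ell,\ell']\otimes[\ell,\ell']$ corresponds to $h$.
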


\begin{proof} In \cite[Proposition 1.1]{Barge}, Barge proved that a map
\begin{align*} \varphi \colon (\bb{P}^1)^3 &\longrightarrow \rm{W}(F) \\
(\ell_1,\ell_2,\ell_3) &\longmapsto \begin{cases} o(\ell_1,\ell_2,\ell_3) & \text{if the three lines are distinct,} \\
0 & \text{else,}\end{cases}
\end{align*}
is a 2-cocycle, i.e.~$\varphi(\ell_2,\ell_3,\ell_4)-\varphi(\ell_1,\ell_3,\ell_4)+\varphi(\ell_1,\ell_2,\ell_4)-\varphi(\ell_1,\ell_2,\ell_3) = 0$. Recall that $q(\ell_1,\ell_2,\ell_1,\ell_3) = o(\ell_1,\ell_2,\ell_3)$ for a generic $(\ell_1,\ell_2,\ell_1,\ell_3)$. We now observe that the formula in the statement factors over the $\SL_2$-coinvariants of $\smash{\ol{\St^2_2}}$. Then \cref{lem:dim2Rels} says that this is spanned by generic elements satisfying only the relations $r(v_1,v_2,v_3,v_3)$ and by the 2-cocycle property $\varphi$ sends this to zero.
\end{proof}

Having constructed both maps, we now prove of the main result of this section:

\begin{proof}[Proof of Theorem \ref{mainthm:dimension 2}] The maps described above fit into a commutative diagram
\[\begin{tikzcd}
	\scr{A}_2 = (\St_2 \otimes \St_2)_{\rm{SL}_2} \arrow[dashed]{rd}[description]{\Phi} \arrow[bend left=15]{rrd}{\epsilon} \arrow[bend right=15]{rdd}[swap]{\rm{scr}} & &[15pt] \\[-7pt]
	& \rm{GW}(F) \rar[swap]{\rm{rk}} \dar{\rm{pr}} & \bb{Z} \dar{\rm{pr}} \\[-5pt]
	& \rm{W}(F) \rar[swap]{\rm{rk}\,(\rm{mod}\,2)} & \bb{Z}/2
\end{tikzcd}\]
because $\epsilon(\{a\}_2) = 1$ and $\rm{rk}(\rm{scr}(\{a\}_2)) \equiv 1$ since the form $\langle a \rangle$ has 1-dimensional underlying vector space. Since the commutative square in this diagram is a pullback, $\epsilon$ and $\rm{scr}$ induce a unique dashed map $\Phi$. We prove it is an isomorphism by constructing an inverse $\Psi$, by declaring it satisfies
	\begin{align*} \Psi \colon \rm{GW}(F) &\longrightarrow \scr{A}_2 \\
    \langle x \rangle &\longmapsto \{x\}_2.\end{align*} 
Once we establish this is well-defined, it is an isomorphism because it is surjective by \cref{theorem: An generated by 1s} and satisfies $\Phi \Psi = \rm{id}_{\rm{GW}(F)}$ so is also injective. 

To see $\Psi$ is well-defined, we must show that it takes the relations of $\rm{GW}(F)$ to relations in $\scr{A}_2$. The former are given by (i) $\langle\lambda^2 a \rangle = \langle a \rangle$ and (ii) $\langle a \rangle + \langle b \rangle = \langle a+b\rangle + \langle ab(a+b) \rangle$. For relation (i), this is simply \cref{lem:f-times-action} \eqref{enum:f-times-action-ii}.  For relation (ii), we first observe that column addition yields
\begin{align*}\langle \begin{bsmallmatrix} 1 & 0 \\ 0 & 1  \end{bsmallmatrix} \rangle &= \langle \begin{bsmallmatrix} 1 & a \\ 0 & 1  \end{bsmallmatrix} \rangle + \langle \begin{bsmallmatrix} 1 & 0 \\ 1/a & 1  \end{bsmallmatrix} = \langle \begin{bsmallmatrix} 1 & a \\ 0 & 1  \end{bsmallmatrix} \rangle + \langle \begin{bsmallmatrix} 1 & -1/a \\ 0 & 1  \end{bsmallmatrix} \rangle \\
&= \{a\}_2+\{-1/a\}_2 = \{a\}_2+\{-a\}_2\end{align*}
where \cref{lem:f-times-action} \eqref{enum:f-times-action-ii} is used in the last step.
Next consider the equation obtained by row addition
\begin{align*}\langle \begin{bsmallmatrix} 1 & 1 \\	a & a+b \end{bsmallmatrix} \rangle &= \langle \begin{bsmallmatrix} 1 & 1 \\ 0 & b \end{bsmallmatrix} \rangle + \langle \begin{bsmallmatrix} 0 & -b/a \\ a & a+b \end{bsmallmatrix} \rangle = \langle \begin{bsmallmatrix} 1 & 1 \\ 0 & b \end{bsmallmatrix} \rangle -\langle \begin{bsmallmatrix} 1 & a+b \\0 & b/a \end{bsmallmatrix} \rangle  \\
&= \{b\}_2-\{a/b(a+b)\}_2 = \{b\}_2-\{ab(a+b)\}_2\end{align*} and the equation obtained by column addition
\begin{align*}\langle \begin{bsmallmatrix} 1 & 1 \\	a & a+b \end{bsmallmatrix} \rangle &= \langle \begin{bsmallmatrix} 1 & 0 \\ a & b \end{bsmallmatrix} \rangle + \langle  \begin{bsmallmatrix} 0 & 1 \\	-b & a+b \end{bsmallmatrix} \rangle = \langle \begin{bsmallmatrix} 1 & 0 \\ a & b \end{bsmallmatrix} \rangle - \langle  \begin{bsmallmatrix} 1 & -(a+b) \\	0 & 1 \end{bsmallmatrix} \rangle \\
&= \{-a\}_2-\{-(a+b)\}_2 = -\{a\}_2+\{a+b\}_2\end{align*}
where in the last step we used $\{-a\}_2= \langle \begin{bsmallmatrix} 1 & 0 \\ 0 & 1  \end{bsmallmatrix} \rangle-\{a\}_2$ and similarly for $\{-(a+b)\}_2$, both copies of $\langle \begin{bsmallmatrix} 1 & 0 \\ 0 & 1  \end{bsmallmatrix} \rangle$ canceling. Since both expressions are equal, we obtain the result.
\end{proof}

\section{Theorems B and C: higher rank} We now prove \cref{mainthm:higherdim} and \cref{mainthm:rational}, starting with the former.

\subsection{Proof of \cref{mainthm:higherdim}} Part \eqref{enum:higherdim-i} of \cref{mainthm:higherdim} says that the $\bb{Z}[F^{\times}]$-module $\scr{A}_n$ is a quotient of $\bb{Z}[F^\times/(F^\times)^n]$. To prove this, one combines \cref{theorem: An generated by 1s}, which says that $\scr{A}_n$ is generated by $\{1\}_n$ as a $\bbZ[F^\times]$-module, and \cref{lem:f-times-action} \eqref{enum:f-times-action-ii}, which says that $\{\lambda^n 1\}_n = \{1\}_n$.

For part \eqref{enum:higherdim-ii} of \cref{mainthm:higherdim} we recall there is a surjective map
\[\scr{A}_n = \St^2_n(F)_{\SL_n(F)} \longrightarrow \St^2_n(F)_{\GL_n(F)} \cong \bb{Z},\]
split by the map sending $1 \in \bb{Z}$ to $\{1\}_n$. To prove that the complementary summand is $n$-torsion when $F$ contains an $n$th root of $-1$, we use that $\{1\}_n = \{-1\}_n$ and so by \cref{exam:products-odd} we have $\{1\}_{n-1}\{1\}_1 = n \{1\}_n$ in this case. Combined with the equality $\{a\}_1 = \{1\}_1$ from \cref{lem:f-times-action} \eqref{enum:f-times-action-ii}, we find that for $a \in F^\times$ it holds that
\[n \{a\}_n = a \cdot n\{1\}_n = a \cdot \{1\}_{n-1} \{1\}_1 = \{1\}_{n-1} \{a\}_1 = \{1\}_{n-1}\{1\}_1 = n \{1\}_n.\]
It follows that $n(\{a\}_n-\{1\}_n) = 0$, which completes the proof of \cref{mainthm:higherdim}.

\subsection{Proof of \cref{mainthm:rational}} In this section, we study the rationalization $\scr{A}^\bb{Q} \coloneq \scr{A}\otimes_\bbZ\bb{Q}$ of the associative nonunital $\bb{Z}[F^\times]$-algebra $\scr{A}$. We first prove that as a graded $\bb{Q}[F^\times]$-algebra, $\scr{A}^\bb{Q}$ is generated by $\scr{A}^\bb{Q}_1$ and $\scr{A}^\bb{Q}_2$.

By \cref{theorem: An generated by 1s}, $\scr{A}^\bb{Q}_n$ is generated by the elements $\{1\}_n$ as a $\bb{Q}[F^\times]$-module. Thus it suffices to show that each $\{1\}_n$ can be expressed as a linear combination of products of elements in degrees $1$ and $2$. Recall from \cref{thm: products} that \[
        \{1\}_a\cdot \{1\}_b = P(a+b,a)\{1\}_{a+b}+Q(a+b,a)\{-1\}_{a+b}.
    \] 
    Let $n\geq 3$. Taking $a=2$ and $b=n-2$ gives \begin{equation}\label{eq:1}\{1\}_{2}\{1\}_{n-2}=P(n,2)\{1\}_n+Q(n,2)\{-1\}_n.\end{equation}
    There are two cases:
    \begin{enumerate}[{Case} 1.]
        \item The first case is that $n=2m+1$ is odd and we have $\{-1\}_n=\{1\}_n$ by \cref{exam:products-odd}. Then \eqref{eq:1} becomes \[\{1\}_{2m-1}\{1\}_2=\binom{2m+1}{2}\{1\}_{2m+1}=m(2m+1)\{1\}_{2m+1}.\] Thus we have $\{1\}_{2m+1}=\frac{1}{m(2m+1)}\{1\}_{2}\{1\}_{2m-1}$ (using that we rationalized). By induction, this gives the answer for all odd $n$, using that $\{1\}_3=\frac{1}{3}\{1\}_1\{1\}_2$.
        \item The second case is that $n=2m$ is even, where we cannot assume $\{-1\}_n = \{1\}_n$. We consider the product $\{-1\}_2\{1\}_{n-2}$ and calculate it using \eqref{eq:1} as follows
    \[
        \{-1\}_2\{1\}_{n-2}= (-1)^{-1} \cdot (\{1\}_2\{1\}_{n-2}) = P(n,2)\{-1\}_n+Q(n,2)\{1\}_n.
    \]
    Evaluating $P(n,2)(\{1\}_2\{1\}_{n-2}) - Q(n,2)(\{-1\}_2\{1\}_{n-2})$ using this and the formula for the product $\{1\}_{2}\{1\}_{n-2}$ in \eqref{eq:1} leads to the equality (writing $n = 2m$)
    \[\{1\}_{2m}=\frac{P(2m,2)\{1\}_2-Q(2m,2)\{-1\}_2}{P(2m,2)^2-Q(2m,2)^2}\{1\}_{2m-2}.\]
    We substitute into this the expressions from \cref{lem:expressions-of-p-and-q}, given by \[\qquad \qquad P(2m,2)=\frac{1}{2}\left(\binom{2m}{2}+\binom{m}{1}\right)=m^2, \quad \text{and} \quad Q(2m,2)=\binom{2m}{2}-m^2=m(m-1),\]
    to obtain \[\{1\}_{2m}=\frac{m\{1\}_2-(m-1)\{-1\}_2}{m(2m-1)}\{1\}_{2m-2}.\]
    Finally, \cref{thm: products} implies $\{1\}_1^2=\{1\}_2+\{-1\}_2$. Substituting this into the previous equation yields \[\{1\}_{2m}=\frac{(2m-1)\{1\}_2-(m-1)\{1\}_1^2}{m(2m-1)}\{1\}_{2m-2}.\]
    This gives a recursion for all even-degree elements, concluding that all even-degree elements are generated by $\{1\}_1$ and $\{1\}_2$.
    \end{enumerate}

\medskip

\noindent This tells us that the (underived) indecomposables are nonzero only in ranks $n=1,2$. For degree reasons, those in rank 1 are given by $\smash{\scr{A}_1^\bb{Q}} \cong \bb{Q}$. In rank 2, we compute that the indecomposables are given by the quotient of $\smash{\scr{A}_2^\bb{Q}}$ by span of $\{1\}_1 \cdot \{1\}_1 = 2\{1\}_2$ and this agrees under the isomorphism $\smash{\scr{A}_2^\bb{Q}} \cong \rm{GW}(F) \otimes \bb{Q}$ with the quotient of $\rm{GW}(F) \otimes \bb{Q}$ by the span of the 1-dimensional form $\langle 1 \rangle$. Since the augmentation $\rm{GW}(F) \otimes \bb{Q} \to \bb{Q}$ is split by sending $1$ to $\langle 1 \rangle$, it follows that $\smash{\scr{A}_2^\bb{Q}}$ is isomorphic to $I(F) \otimes \bb{Q}$. 

\begin{remark}Since rationalizing preserves pullbacks and $\bb{Z}/2 \otimes \bb{Q} = 0$, it follows that there is an isomorphism $I(F) \otimes \bb{Q} \cong \rm{W}(F) \otimes \bb{Q}$.\end{remark}

\section{Theorem D: $E_k$-cells} \label{sec:ek-cells}
Recent work of Galatius--Kupers--Randal-Williams considers an $E_\infty$-algebra $\bf{BGL}(F)_\bb{Z}$ constructed out of the general linear groups $\GL_n(F)$ and proves that for any infinite field $F$ there is an isomorphism
\[H^{E_2}_{n,d}(\bf{BGL}(F)_\bb{Z}) \cong \widetilde{H}_{d-(2n-2)}(\GL_n(F);\St^2_n(F)),\]
and so the $\GL_n(F)$-coinvariants of $\St^2_n(F)$ give the first nonzero $E_2$-homology groups of $\bf{BGL}(F)_\bb{Z}$ \cite[Theorem 6.5]{GKRW25}. The goal of this section is to adapt their argument to the case of special linear groups and prove \cref{mainthm:ekcells}. See \cite{GKRW23,GKRW25,KRS1} for more details.

\medskip

Our starting point is the 1-groupoid $\rm{Vect}_F$ whose objects are finite-dimensional vector spaces over $F$ and whose morphisms are the linear isomorphisms. Direct sum makes this a symmetric monoidal category. To equip the collection of special linear groups with an $E_\infty$-algebra structure, we consider a functor
\[\rm{grdet} \colon \rm{Vect}_F \longrightarrow \rm{grDet}_F.\]
Here, $\rm{grDet}_F$ is the groupoid with objects given by nonnegative integers and automorphisms of $n$ are given by $F^\times$, and the functor $\rm{grdet}$ takes a vector space $V$ to $\dim(V)$ and a linear automorphism $A$ to $\det(A)$. We encourage the reader to think of the object $n$ of $\rm{grDet}_F$ as corresponding to $\Lambda^n F^n$, which for example, elucidates the following. There is a symmetric monoidal structure on $\rm{grDet}_F$ that makes $\rm{grdet}$ strong symmetric monoidal: its tensor product is on objects given by addition, on morphisms by multiplication, and its symmetry isomorphism $n+m \simeq m+n$ is $(-1)^{mn}$. 

Let $\rm{Spc}$ denote the $\infty$-category of spaces and $\scr{D}_\bb{Z}$ the derived $\infty$-category of $\bb{Z}$-modules. For any presentable symmetric monoidal category $\scr{C}$, such as $\rm{Spc}$ or $\scr{D}_\bb{Z}$, Day convolution then endows the domain and target of the left Kan extension
\[\rm{grdet}_! \colon \rm{Fun}(\rm{Vect}_F,\scr{C}) \longrightarrow \rm{Fun}(\rm{grDet}_F,\scr{C})\]
with symmetric monoidal structures so that $\rm{grdet}_!$ is strong symmetric monoidal. If we take the terminal unital $E_\infty$-algebra given by the constant functor $\ul{\ast} \in \rm{Fun}(\rm{Vect}_F,\rm{Spc})$, apply the symmetric monoidal functor $C_*(-;\bb{Z})$ of integral chains, and then pass to the augmentation ideal to obtain a nonunital $E_{\infty}$-algebra $\ul{\bb{Z}}_{>0} \in \rm{Fun}(\rm{Vect}_F,\scr{D}_\bb{Z})$, then 
\[\bf{BSL}(F)_\bb{Z} \coloneq \rm{grdet}_!(\ul{\bb{Z}}_{>0})\]
is a nonunital $E_\infty$-algebra with additional $\bb{N}$-grading satisfying $\bf{BSL}(F)_\bb{Z}(n) \simeq C_*(\rm{BSL}_n(F);\bb{Z})$ for $n > 0$ (it vanishes for $n=0$). We note that the action of the automorphisms in $\rm{grDet}_F$ endows its homology groups with the usual $\bb{Z}[F^\times]$-action.

Now fix $1 \leq k \leq \infty$. The \emph{trivial algebra} functor $\rm{triv}_{E^\rm{nu}_k}$ which assigns to an object $X \in \rm{Fun}(\rm{grDet}_F,\scr{C})$ the trivial nonunital $E_k$-algebra with underlying object $X$, admits a left adjoint $\rm{cot}_{E^\rm{nu}_k}$ of \emph{$E_k$-indecomposables} (we follow the more modern notation of \cite{KRS1}, noting that $\smash{\rm{cot}_{E^\rm{nu}_k}}$ is denoted $\smash{Q^{E_k}_\bb{L}}$ in \cite{GKRW23,GKRW25}). For a nonunital $E_k$-algebra $\bf{A} \in \rm{Alg}_{E_k^\rm{nu}}(\rm{Fun}(\rm{grDet}_F,\scr{D}_\bb{Z}))$, we define its $E_k$-homology groups as
\[H^{E_k}_{n,d}(\bf{A}) \coloneq \pi_d\big(\rm{cot}_{E_k^\rm{nu}}(\bf{A})(n)\big).\]
To compute these, we use that one can interchange the constructions $\rm{grdet}_!$ and $C_*(-;\bb{Z})$ with $E_k$-indecomposables $\rm{cot}_{E^\rm{nu}_k}$, as they are symmetric monoidal left adjoints: We let $\underline{\ast}_{> 0} \in \rm{Alg}_{E_k^\rm{nu}}(\rm{Fun}(\rm{Vect}_F,\rm{Spc}))$ be the trivial non-unital $E_k$-algebra and combine \cite[Proposition 17.14]{GKRW23} with the proof of \cite[Theorem 5.20]{GKRW25} to see that there are equivariant homotopy equivalences
\[
    S^k \wedge \rm{cot}_{E^{\rm{nu}}_k}(\underline{\ast}_{> 0})(n) \simeq \widetilde{D}^k(n)
\]
where $\widetilde{D}^k(n)$ is the pointed space given by the $k$-fold split building of \cite[Definition 5.9]{GKRW25}. Then, we use the symmetric monoidal left adjointness to see that
\[
    H^{E_k}_{n,d}(\bf{BSL}(F)_\bb{Z}) \cong \widetilde{H}_{d-k}(\widetilde{D}^k(n) {\sslash} \rm{SL}_n(F);\bb{Z}),
\]
where on the right we take pointed orbits.

We would now like to finish our argument as \cite{GKRW25}: There is a $\rm{GL}_n(F)$-equivariant map $\smash{\widetilde{D}^k(n)} \to D^k(n)$ to a non-split building \cite[Definition 5.4]{GKRW25}. Since $F$ is infinite, by a result of Nesterenko--Suslin the induced map on $\GL_n(F)$-orbits is an equivalence \cite[Theorem 5.18]{GKRW25}. For $k=2$ we have that $ D^2(n) \simeq \Sigma^2 (T(F^n) \wedge \Sigma^2 T(F^n))$ \cite[Lemma 6.1, Proposition 6.3]{GKRW25}. Here $T(F^n)$ is the \emph{Tits building}, which is $(n-2)$-spherical and whose $(n-2)$nd reduced homology is the definition of $\rm{St}_n(F)$. Thus $D^2(n)$ is $2n$-spherical and its reduced $2n$th homology is given by $\St^2_n(F) = \St_n(F) \otimes \St_n(F)$, which implies the claim.

The issue is that the statement of Nesterenko--Suslin does not apply to special linear groups. However, our next lemma shows that its $\SL_n(F)$ analogue is nonethelesss true by a localization trick due to Schlichting \cite[Section 2]{Schlichting}, using that $F$ is infinite.

\begin{lemma}\label{lem:ns-replacement} Let $p,q \geq 1$ such that $p+q = n$ and consider the subgroup $\rm{SAff}_{p,q}(F) \subset \rm{SL}_n(F)$ of matrices of the form $\begin{bsmallmatrix} A & B \\ 0 & \rm{id}_p \end{bsmallmatrix}$. Then the following inclusion induces an isomorphism on homology
\[\SL_q(F) \longrightarrow \rm{SAff}_{p,q}(F).\]
\end{lemma}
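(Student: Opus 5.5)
The plan is to analyse $\rm{SAff}_{p,q}(F)$ as a semidirect product and kill the contribution of the unipotent radical using a weight argument, following Schlichting's localization trick \cite[Section 2]{Schlichting}. Write $V \coloneq F^{q\times p}$ for the abelian group of matrices $B$. Then $\rm{SAff}_{p,q}(F) \cong V \rtimes \SL_q(F)$, where $A$ acts on $B$ by $B\mapsto AB$. The inclusion $\SL_q(F)\to\rm{SAff}_{p,q}(F)$ has a retraction $\begin{bsmallmatrix} A & B \\ 0 & \rm{id}_p\end{bsmallmatrix}\mapsto A$. So $H_*(\SL_q(F))$ is a natural direct summand of $H_*(\rm{SAff}_{p,q}(F))$, and it suffices to show that the relative homology $H_*(\rm{SAff}_{p,q}(F),\SL_q(F);\bb{Z})$ vanishes. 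I would use the Lyndon--Hochschild--Serre spectral sequence
\[E^2_{s,t} = H_s(\SL_q(F);H_t(V;\bb{Z}))\Longrightarrow H_{s+t}(\rm{SAff}_{p,q}(F)).\]
The bottom row $t=0$ is exactly the split summand $H_*(\SL_q(F))$. Hence the relative homology has a finite filtration in each degree whose associated graded is a subquotient of the rows $t\geq 1$.

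To kill the rows $t\geq 1$, I would introduce an $F^\times$-action. For $\mu\in F^\times$, conjugation by $\rm{diag}(\rm{id}_q,\mu\,\rm{id}_p)\in\GL_n(F)$ preserves $\rm{SAff}_{p,q}(F)$, fixes $\SL_q(F)$ pointwise and scales $B$ by $\mu^{-1}$. This gives an action of $F^\times$ on the whole extension, hence on the spectral sequence and on the relative homology. The action is trivial on the row $t=0$, and on $E^2_{s,t}$ it is induced from the scalar action of $F^\times$ on $H_t(V;\bb{Z})$. The key input is the standard lemma (Suslin, and as used by Schlichting and in Nesterenko--Suslin) that, for $F$ infinite and $t\geq 1$, there is an element $x\in\bb{Z}[F^\times]$ with augmentation $\epsilon(x)=1$ acting by zero on $H_t(V;\bb{Z})$, compatibly with the $\SL_q(F)$-action. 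I would prove this by first handling $H_1(V)=V$, where a choice $x = [\mu]-[1]+1$ with $\mu$ not a root of unity of small order does not immediately work. So I would instead use the polynomial structure: $H_t(V)$ is built functorially from tensor powers of $V$ over the prime field, on which $\mu$ acts through polynomials of degree between $1$ and $t$ in $\mu$. Infinitely many $\mu$ then allow a Vandermonde-type combination $x=\sum c_i[\mu_i]$ with $\sum c_i=1$ that kills all weights $1,\dots,t$. Since $x$ acts by zero on $E^2_{s,t}$ for $t\ge 1$ (up to the bounded range needed in a fixed total degree), some power $x^N$ acts by zero on the relative homology in that degree.

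The main obstacle is to conclude from this that the relative homology itself vanishes. For that I need $x$ to act invertibly, for instance as the identity, i.e.\ I need $F^\times$ to act trivially on $H_*(\rm{SAff}_{p,q}(F),\SL_q(F))$. Conjugation by $\rm{diag}(\rm{id}_q,\mu\,\rm{id}_p)$ is not inner in $\rm{SAff}_{p,q}(F)$, so this is not automatic. My first attempt would be Schlichting's device: modulo scalars, this conjugation agrees with conjugation by $\rm{diag}(\mu^{-1}\rm{id}_q,\rm{id}_p)$, which acts on $V$ the same way. I would then compare with the conjugation by $\rm{diag}(d,\rm{id}_p)$ for $d\in\GL_q(F)$ of determinant $\mu^{-1}$, and try to show that the difference acts trivially on the relative groups via the retraction. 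If that fails, the fallback is to run the argument with $\bb{Z}[F^\times]$-coefficients and localize: invert $x$ throughout, use that the localized relative homology vanishes, and then show separately that the localization map is injective on the relative groups because the $F^\times$-action there is unipotent.
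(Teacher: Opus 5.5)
\textbf{There is a genuine gap, and it cannot be closed: the statement is false as written.} The step you single out as the main obstacle, showing that $F^\times$ acts trivially (or unipotently) on $H_*(\mathrm{SAff}_{p,q}(F),\SL_q(F))$, fails in general.

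The simplest counterexample is $q=1$. Then $\SL_1(F)=1$, while $\mathrm{SAff}_{p,1}(F)\cong F^{p}$ is abelian with $H_1(F^p;\mathbb{Z})=F^p\neq 0$.

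The failure is not confined to this degenerate case. Take $F=\mathbb{Q}$, $q=2$, $p=1$ in your spectral sequence:
\begin{itemize}
\item $E^2_{0,2}=(\Lambda^2_{\mathbb{Q}}\mathbb{Q}^2)_{\SL_2(\mathbb{Q})}\cong\mathbb{Q}$, since $\SL_2$ acts on $\Lambda^2$ through the determinant.
\item The whole row $t=1$ vanishes, because $-\mathrm{id}$ is central and acts by $-1$ on the $\mathbb{Q}$-vector space $\mathbb{Q}^2$.
\item Differentials leaving row $t=0$ vanish by the splitting.
\end{itemize}
Hence $E^\infty_{0,2}\cong\mathbb{Q}$, and $H_2(\SL_2(\mathbb{Q}))\to H_2(\mathrm{SAff}_{1,2}(\mathbb{Q}))$ is not surjective.

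In both examples $\mu$ acts on the relative homology by $\mu^{-1}$, resp.\ $\mu^{-2}$, and your element $x$ annihilates it. So the action is neither trivial nor unipotent, and the map to the localization is not injective. Neither of your rescue strategies can work. In the first one, conjugation by $\mathrm{diag}(d,\mathrm{id}_p)$ with $\det d=\mu^{-1}\neq 1$ is no more inner than the automorphism you started with.

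What your argument does prove is the localized statement. The action fixes $\SL_q(F)$ pointwise, so $x$ acts as the identity on $H_*(\SL_q(F))$, while a power of $x$ kills the relative homology in each degree. Hence $H_r(\SL_q(F))\cong x^{-1}H_r(\mathrm{SAff}_{p,q}(F))$, which is the form of Schlichting's Theorem~2.5 quoted in the paper.

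The paper's proof then removes the localization by asserting that $a_J^{-t}$ acts by conjugation with a central matrix of $\GL_n(F)$, hence trivially. This is exactly the step you rightly refused to take for granted. The relevant action on $H_*(\mathrm{SAff}_{p,q}(F))$ is not determined by determinants, since $\SL_n(F)$ does not even normalize $\mathrm{SAff}_{p,q}(F)$. So the existence of a central matrix of determinant $a_J^{-t}$ says nothing about how $a_J^{-t}$ acts. The examples above show that $s$ cannot act as the identity, since otherwise the localized isomorphism would already be an unlocalized one.

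For the intended application in the next lemma, you can instead compare $\SL(F^n|{\preceq})\to\SL(F^n|{\leq})$ directly. This is a split extension by the abelian group of blocks $M_{-z}\to M_z$. The complement contains the central, determinant-one elements $\mathrm{diag}(\gamma^{-d}\,\mathrm{id}_{M_{-z}},\gamma^{n-d}\,\mathrm{id}_{M_z})$ with $d=\dim M_z$, which act on the kernel by $\gamma^{n}$. Because these are genuinely inner, they act trivially on the $E^2$-page. Your weight argument, run with an element built from $n$th powers $a_J^{n}$, then kills all rows $t\geq 1$.
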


\begin{proof}The infinite field $F$ is a ring with many units and thus is an $S(m)$-algebra for every $m \geq 1$ (see \cite[Definition 2.1 et seq.]{Schlichting}). \cite[Theorem 2.5]{Schlichting} says taking $m = l \cdot n$ and $t = n \cdot q$ for $l \geq 1$, there exists an element $s_{m,-t} \in \bb{Z}[F^\times]$ so that the inclusion induces an isomorphism $H_r(\rm{SL}_q(F)) \cong s^{-1}_{m,-t} H_r(\rm{SAff}_{p,q}(F))$ for $r \leq l$. Looking at construction of $s_{m,-t}$ in the beginning of \cite[Section 2]{Schlichting}, we see it is given by a sum in $\bb{Z}[F^\times]$ of $(-t)$th powers of elements $a_J = \sum_{j \in J} a_j$ that satisfy $a_J \in F^\times$ (because $F$ is an $S(m)$-algebra).  Since we picked $t = n \cdot q$, we may represent the action of the $(-t)$th power $a_J^{-t}$ by conjugation with the diagonal matrix $\rm{diag}(a_J^{-t/n},\compactldots,a_J^{-t/n})$ of $\GL_n(F)$. But this is central, and hence acts as identity on both the subgroups $\SL_q(F)$ and $\rm{SAff}_{p,q}(F)$ of $\GL_n(F)$. As the augmentation $\epsilon \colon \bb{Z}[F^\times] \to \bb{Z}$ that takes each generator $a_J \in F^\times$ to $1$ sends $s_{m,-t}$ to $1$ as well (see \cite[Section 2]{Schlichting}), we conclude that $s_{m,-t}$ acts as the identity on $H_*(-;\bb{Z})$ of both sides. Since $r$ was arbitrary, the result follows.
\end{proof}

\begin{lemma}If $F$ is infinite then the map $\widetilde{D}^k(n){\sslash}\SL_n(F) \to D^k(n){\sslash}\SL_n(F)$ of pointed spaces induces an isomorphism on homology.
\end{lemma}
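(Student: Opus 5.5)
We follow the proof of \cite[Theorem 5.18]{GKRW25}, where this statement is proved for $\GL_n(F)$-orbits using Nesterenko--Suslin. We replace that input with \cref{lem:ns-replacement}. Both $\widetilde{D}^k(n)$ and $D^k(n)$ are built from simplices indexed by flags, or split flags, of subspaces of $F^n$. Taking pointed homotopy orbits, we filter both sides by skeleta, or by the poset of flag types. The associated spectral sequences then reduce the comparison to a cell-by-cell statement about stabilizers.

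In more detail, the first step is to compare the spectral sequences converging to $\widetilde{H}_*(-{\sslash}\SL_n(F))$. Their $E^1$-terms are $\bigoplus_{\sigma} H_*(\mathrm{Stab}_{\SL_n}(\sigma);\bb{Z}_{\sigma})$, with $\sigma$ running over orbit representatives of cells. For $D^k(n)$ the stabilizers are intersections of parabolic subgroups of $\SL_n(F)$. For $\widetilde{D}^k(n)$ they are the corresponding Levi-type subgroups, namely block-diagonal subgroups $S(\GL_{q_1}\times\cdots\times\GL_{q_r})$. Both sides have the same orbits of cells, since $\SL_n(F)$ acts transitively on flags of a given type and on splittings compatible with a flag. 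The map on $E^1$ is therefore induced by the inclusion of each Levi-type subgroup into its parabolic-type subgroup. The orientation twists agree, because they come from the same underlying combinatorial cells.

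It then suffices to show that $S(\GL_{q_1}\times\cdots\times\GL_{q_r}) \to P$ induces an isomorphism on homology with these coefficients, where $P$ is the corresponding parabolic of $\SL_n(F)$. We do this by peeling off unipotent radicals one block at a time. The extension $U \to P \to L$ splits, and we reduce to showing that subgroups of the form $\begin{bsmallmatrix} A & B \\ 0 & D\end{bsmallmatrix}$ with $\det A\det D=1$ have the same homology as the block-diagonal subgroup. Use the Lyndon--Hochschild--Serre spectral sequence for the projection onto the $D$-block. The fibre is $\mathrm{SAff}$-like twisted by $\det D$. Apply \cref{lem:ns-replacement}, in the form $\SL_q\to \mathrm{SAff}_{p,q}$ with $\bb{Z}$-coefficients, fibrewise.

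We expect the main obstacle to be exactly this: \cref{lem:ns-replacement} concerns only the fibre $\SL_q \subset \mathrm{SAff}_{p,q}$, whereas the fibres here are $\{A : \det A = \det D^{-1}\}$ with the $D$-block acting. We would handle it by comparing the extensions $\SL_q \rtimes \GL_p$-type subgroups with $\mathrm{SAff}_{p,q}\rtimes\GL_p$-type subgroups. The comparison map of LHS spectral sequences is then an isomorphism on $E^2$, because it is an isomorphism on homology of the normal subgroups by \cref{lem:ns-replacement}, and this isomorphism is equivariant for the quotient action. Possibly we would also conjugate by central elements as in the proof of \cref{lem:ns-replacement} to handle the twisting. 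Iterating over the blocks gives the isomorphism for every stabilizer. By comparison of spectral sequences, the map $\widetilde{D}^k(n){\sslash}\SL_n(F) \to D^k(n){\sslash}\SL_n(F)$ is then a homology isomorphism.
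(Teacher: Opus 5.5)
Your strategy matches the paper's. You feed a Nesterenko--Suslin-type comparison of stabilizers into the argument of \cite[Theorem 5.18]{GKRW25}, and you prove that comparison block by block with Lyndon--Hochschild--Serre, applying \cref{lem:ns-replacement} to the kernels.

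\textbf{The gap: you reduce to the wrong class of groups.} You say the stabilizers of cells of $D^k(n)$ are intersections of parabolic subgroups. But you then only prove that $S(\GL_{q_1}\times\cdots\times\GL_{q_r})\to P$ is a homology isomorphism for a single parabolic $P$, iterating over the blocks of one flag. For $k\geq 2$, including the case $k=2$ needed for \cref{mainthm:ekcells}, a cell involves several flags at once. Relative to a common adapted decomposition $F^n=\bigoplus_{x\in X}M_x$, its stabilizer is $\SL(F^n|{\leq})$ for a \emph{partial} order on $X$, and this is usually not parabolic. The split stabilizer is the block-diagonal group $\SL(F^n|{\simeq})$ of that finer decomposition.

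For instance, the stabilizer in $\SL_4(F)$ of the planes $\langle e_1,e_2\rangle$ and $\langle e_1,e_3\rangle$ consists of the matrices of shape $\begin{bsmallmatrix} *&*&*&*\\0&*&0&*\\0&0&*&*\\0&0&0&*\end{bsmallmatrix}$. It is triangular for an order in which $e_2$ and $e_3$ are incomparable, and it is not a parabolic subgroup.

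What you need is the paper's key claim: $\SL(F^n|{\simeq})\to\SL(F^n|{\leq})$ is a homology isomorphism for every partial order. Your method does prove this if, at each step, you peel off a maximal element $z$ rather than a block of a flag. Then:
\begin{itemize}
\item $f(M_z)\subseteq M_z$ for every $f$ in the group.
\item The order $\leq$ and the order $\preceq$ obtained by making $z$ incomparable give extensions with the same quotient $\GL(M_{-z}|{\leq})$.
\item The kernels are $\SL(M_z)\to\SL(F^n\,\mathrm{fix}\,M_z)\cong\SL(M_z)\ltimes\bigoplus_{x<z}\mathrm{Hom}(M_x,M_z)$, which is of the form $\SL_q\to\mathrm{SAff}_{p,q}$.
\end{itemize}

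\textbf{The ``main obstacle'' you anticipate does not arise.} Take $G=\{\begin{bsmallmatrix}A&B\\0&D\end{bsmallmatrix}\}\subseteq\SL_n(F)$ and the projection $g\mapsto D\in\GL_p(F)$. Its kernel is exactly $\mathrm{SAff}_{p,q}(F)$, and for the block-diagonal subgroup the kernel is exactly $\SL_q(F)$. The sets $\{\det A=(\det D)^{-1}\}$ you mention are fibres (cosets), not the normal subgroup that enters the spectral sequence. So the map of extensions over the identity of $\GL_p(F)$ is already an isomorphism on $E^2$ by \cref{lem:ns-replacement}, with no twisting and no central conjugation needed. This is your fallback plan, and it is exactly the paper's inductive step.
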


\begin{proof}
This proof is an adaptation of \cite[Proposition 5.13 and Theorem 5.18]{GKRW25}, and we begin by discussing the setting: We fix a direct sum decomposition $F^n = \bigoplus_{x \in X} M_x$, so that any $F$-linear endomorphism $f : F^n \to F^n$ can be written as a matrix with entries $f_{x,y} \in \operatorname{Hom}_F(M_x,M_y)$. For a partial order $\leq$ on the index set $X$, an endomorphism of $F^n$ is called upper triangular with respect to $\leq$ if $f_{x,y} = 0$ unless $x \leq y$. We write $\GL(F^n|\leq) \subset \GL_n(F)$ for the subgroup of elements that are upper-triangular with respect to $\leq$, and define $\SL(F^n|\leq)$ as the intersection of $\GL(F^n|{\leq})$ with $\SL_n(F)$. Finally, we record that if $\preceq$ and $\leq$ are two partial orders on the index set $X$ such that $x \preceq y$ implies $x \leq y$, then there is an inclusion $\SL(F^n|{\preceq}) \hookrightarrow \SL(F^n|{\leq})$. For the identity relation $\simeq$ on $X$, we have that $\SL(F^n|\simeq)$ is the group of diagonal matrices given by the intersection of $\GL(F^n|\simeq) = \prod_x \GL(M_x)$ and $\SL_n(F)$. Our key claim is that the inclusion
\[
    \SL(F^n|\simeq) \to \SL(F^n|\leq)
\]
induces an isomorphism on homology in each fixed degree $r$.

To prove this, we replace the Nesterenko--Suslin property used in the proof of \cite[Proposition 5.13]{GKRW25} by \cref{lem:ns-replacement}: Defining $\preceq_0$ as the partial order $\leq$ on $X$, we can inductively define a sequence of partial orders $\preceq_i$ by picking a maximal element $z_i \in X \setminus \{z_1, \dots, z_{i-1}\}$ with respect to $\preceq_{i-1}$, declaring $\preceq_i$ to agree with $\preceq_{i-1}$ on $X\setminus\{z_i\}$ and letting $z_i$ be incomparable to any other element except itself. Noting that $\preceq_{|X|}$ is precisely the identity relation $\simeq$, we may verify the statement by induction on $\preceq_i$ with trivial base case $\preceq_{|X|}$.
For the induction step, we thus need to compare partial orders $\preceq$ and $\leq$ which agree on $X_{-z} \coloneq X \backslash \{z\}$ for some $z \in X$, such that for any $x \in X_{-z}$ (a) neither $z \preceq x$ nor $x \preceq z$ holds, and (b) $x \not\geq z$ (although $x \leq z$ is allowed). We observe that (b) implies that any $f \in \SL(F^n|{\leq})$ satisfies $f(M_z) \subseteq M_z$. Writing $F^n = M_{-z} \oplus M_z$ with $M_{-z} = \bigoplus_{x \neq z} M_x$, we obtain an isomorphism $M_{-z} \to F^n/M_z$. This leads to a surjection $\SL(F^n|{\leq}) \to \GL(M_{-z}|{\leq}_{|M_{-z}})$ whose kernel is $\SL(F^n\,\rm{fix}\,M_z)$. Similarly, there is a surjection $\SL(F^n|{\preceq}) \to \GL(M_{-z}|{\leq}_{|M_{-z}})$ whose kernel is $\SL(M_z)$. To complete the proof of the key claim, we apply Schlichting's result in the map of Lyndon--Hochschild--Serre spectral sequences for the map of short exact sequences
\[\begin{tikzcd} \SL(M_z) \rar \dar & \SL(F^n|{\preceq}) \rar \dar & \GL(M_{-z}|{\leq}_{|M_{-z}}) \dar[equal] \\[-5pt]
\SL(F^n\,\rm{fix}\,M_z) \rar & \SL(F^n|{\leq}) \rar & \GL(M_{-z}|{\leq}_{|M_{-z}})\end{tikzcd}\]
using that the left vertical map is isomorphic to one of the form $\SL_q(F) \to \rm{SAff}_{p,q}(F)$ and hence an isomorphism on homology by \cref{lem:ns-replacement}.

With this variant of \cite[Proposition 5.13]{GKRW25} in hand, one may now adapt the proof of \cite[Theorem 5.18]{GKRW25} to get the statement.\end{proof}

Taking $k=2$ and continuing the argument as outlined for general linear groups, we obtain for an infinite field $F$ an isomorphism
\[H^{E_2}_{n,d}(\bf{BSL}(F)_\bb{Z}) \cong \widetilde{H}_{d-(2n-2)}(\SL_n(F);\St^2_n(F)).\]
Thus, the $\SL_n(F)$-coinvariants of $\St^2_n(F)$ give the first nonzero $E_2$-homology groups of $\bf{BSL}(F)_\bb{Z}$. This completes the proof of \cref{mainthm:ekcells}.

\begin{remark}
    The main difficulty with adapting the homological stability results of \cite{GKRW25} to special linear groups, as alluded to in the introduction, is that free $E_k$-algebras in $\rm{Fun}(\rm{grDet}_F,\scr{D}_\bb{Z})$ have significantly larger homology groups than those in $\rm{Fun}(\bb{N},\scr{D}_\bb{Z})$ (which appears in its place in \cite{GKRW25}). For example, we have $H_{n,d}(\rm{free}_{E_k^\rm{u}}(1_! \bb{Z});\bb{Z}) \cong H_d(\rm{SM}_n(F);\bb{Z})$ where $\rm{SM}_n(F)$ is the group of special monomial matrices.
\end{remark}

\begin{remark}
    When $R$ is not a field, one may similarly define $E_\infty$-algebras $\mathbf{BGL}(R)$ and $\mathbf{BSL}(R)$. There are cases, e.g.~connected semilocal rings with infinite residue fields \cite{GKRW25}, where one can define variants of $\St_n(R)$ and $\St^2_m(R)$ that play an analogous roles for $\mathbf{BGL}(R)$ as $E_1$-, or $E_2$-algebra. However, in these cases it is generally not true that $\St^2_n(R)$ is isomorphic to $\St_n(R)\otimes\St_n(R)$. If so, we believe that the $\SL_n(R)$-coinvariants of $\St^2_n(R)$ and $\St_n(R)\otimes\St_n(R)$ are both worth studying.
\end{remark}

\bibliographystyle{amsalpha}
\bibliography{./references}

\end{document}